\date{\today}
\newtheorem{theorem}{Theorem}[section]
\newtheorem{proposition}[theorem]{Proposition}
\newtheorem{corollary}[theorem]{Corollary}
\newtheorem{lemma}[theorem]{Lemma}
\theoremstyle{definition}
\newtheorem{example}[theorem]{Example}
\newtheorem{remark}[theorem]{Remark}
\newtheorem{definition}[theorem]{Definition}
\begin{document}

\title[Pseudocompactness, products and topological Brandt $\lambda^0$-extensions ...]{Pseudocompactness, products and topological Brandt $\lambda^0$-extensions of semitopological monoids}

\author[O.~Gutik]{Oleg~Gutik}
\address{Faculty of Mathematics, National University of Lviv,
Universytetska 1, Lviv, 79000, Ukraine}
\email{o\underline{\hskip5pt}\,gutik@franko.lviv.ua,
ovgutik@yahoo.com}

\author[O.~Ravsky]{Oleksandr~Ravsky}
\address{Pidstrygach Institute for Applied Problems of Mechanics and Mathematics of NASU, Naukova 3b, Lviv, 79060, Ukraine}
\email{oravsky@mail.ru}

\keywords{Semigroup, Brandt $\lambda^0$-extension, semitopological semigroup, topological Brandt $\lambda^0$-extension, pseudocompact space, countably compact space, countably pracompact space, sequentially compact space, $\omega$-bounded space, totally countably compact space, countable pracompact space, sequential pseudocompact space}

\subjclass[2010]{Primary 22A15, 54H10}

\begin{abstract}
In the paper we study the preservation
of pseudocompactness (resp., countable compactness, sequential compactness, $\omega$-boundedness, totally countable compactness, countable pracompactness, sequential pseudocompactness)
by Tychonoff products of pseudocompact (and countably compact) to\-pological Brandt $\lambda_i^0$-extensions of semitopological monoids with zero.
In particular we show that if $\big\{ \big(B^0_{\lambda_i}(S_i),\tau^0_{B(S_i)}\big) \colon i\in\mathscr{I}\big\}$ is a family of Hausdorff pseudocompact to\-pological Brandt $\lambda_i^0$-extensions of pseudocompact semitopological monoids with zero such that the Tychonoff product $\prod\left\{ S_i \colon i\in\mathscr{I}\right\}$ is a pseudocompact space then the direct product $\prod\big\{ \big(B^0_{\lambda_i}(S_i),\tau^0_{B(S_i)}\big) \colon i\in\mathscr{I}\big\}$ endowed with the Tychonoff topology is a Hausdorff pseudocompact semitopological semigroup.
\end{abstract}

\maketitle

\section{Introduction and preliminaries}

Further we shall follow the terminology of \cite{CHK, CliffordPreston1961-1967, Engelking1989, Petrich1984, Ruppert1984}. By $\mathbb{N}$ we shall denote the set of all positive integers.

A semigroup is a non-empty set with a binary associative operation. A semigroup $S$ is called \emph{inverse} if for any $x\in S$ there exists a unique $y\in S$ such that $x\cdot y\cdot x=x$ and $y\cdot x\cdot y=y$. Such the element $y$ in $S$ is called
\emph{inverse} to $x$ and is denoted by $x^{-1}$. The map assigning to each element $x$ of an inverse semigroup $S$ its inverse $x^{-1}$ is called the \emph{inversion}.

For a semigroup $S$ by $E(S)$ we denote the subset of idempotents of $S$, and by $S^1$ (resp., $S^0$) we denote the semigroup $S$ the adjoined unit (resp., zero) (see \cite[Section~1.1]{CliffordPreston1961-1967}). Also if a
semigroup $S$ has zero $0_S$, then for any $A\subseteq S$ we
denote $A^*=A\setminus\{ 0_S\}$.

For  a semilattice $E$  the semilattice operation on $E$ determines the partial order $\leqslant$ on $E$: $$e\leqslant f\quad\text{if and only if}\quad ef=fe=e.$$ This order is called {\em natural}. An element $e$ of a partially ordered set $X$ is
called {\em minimal} if $f\leqslant e$  implies $f=e$ for $f\in X$. An idempotent $e$ of a semigroup $S$ without zero (with zero) is called \emph{primitive} if $e$ is a minimal element in $E(S)$ (in $(E(S))^*$).

Let $S$ be a semigroup with zero and $\lambda\geqslant 1$ be a cardinal. On the set
$B_{\lambda}(S)=\left(\lambda\times S\times\lambda\right)\sqcup\{ 0\}$ we define a  semigroup operation as follows
 $$
 (\alpha,a,\beta)\cdot(\gamma, b, \delta)=
  \begin{cases}
    (\alpha, ab, \delta), & \text{ if } \beta=\gamma; \\
    0, & \text{ if } \beta\ne \gamma,
  \end{cases}
 $$
and $(\alpha, a, \beta)\cdot 0=0\cdot(\alpha, a, \beta)=0\cdot 0=0$, for all $\alpha, \beta, \gamma, \delta\in \lambda$ and $a, b\in S$. If $S$ is a monoid, then the semigroup $B_\lambda(S)$ is called the {\it Brandt $\lambda$-extension of the semigroup}
$S$~\cite{Gutik1999}. Obviously, ${\mathcal J}=\{ 0\}\cup\{(\alpha, {\mathscr O}, \beta)\colon {\mathscr O}$ is the zero of $S\}$ is an ideal of $B_\lambda(S)$. We put $B^0_\lambda(S)=B_\lambda(S)/{\mathcal J}$ and we shall call $B^0_\lambda(S)$ the {\it Brandt $\lambda^0$-extension of the semigroup $S$ with zero}~\cite{GutikPavlyk2006}. Further, if $A\subseteq S$ then we shall denote $A_{\alpha,\beta}=\{(\alpha, s, \beta)\colon s\in A \}$ if $A$ does not contain zero, and $A_{\alpha,\beta}=\{(\alpha, s, \beta)\colon s\in A\setminus\{ 0\} \}\cup \{ 0\}$ if $0\in A$, for $\alpha, \beta\in {\lambda}$. If $\mathcal{I}$ is a trivial  semigroup (i.e., $\mathcal{I}$ contains only one element), then by ${\mathcal{I}}^0$ we denote the semigroup $\mathcal{I}$ with the adjoined zero. Obviously, for any
$\lambda\geqslant 2$ the Brandt $\lambda^0$-extension of the semigroup ${\mathcal{I}}^0$ is isomorphic to the semigroup of $\lambda\times\lambda$-matrix units and any Brandt $\lambda^0$-extension of a semigroup with zero contains the
semigroup of $\lambda\times\lambda$-matrix units. Further by $B_\lambda$ we shall denote the semigroup of $\lambda\times\lambda$-matrix units and by $B^0_\lambda(1)$  the subsemigroup of $\lambda\times\lambda$-matrix units of the Brandt $\lambda^0$-extension of a monoid $S$ with zero.

A semigroup $S$ with zero is called \emph{$0$-simple} if $\{0\}$ and $S$ are its only ideals and $S^2\neq\{0\}$, and \emph{completely $0$-simple} if it is $0$-simple and has a primitive idempotent~\cite{CliffordPreston1961-1967}. A completely $0$-simple inverse semigroup is called a \emph{Brandt semigroup}~\cite{Petrich1984}. By  Theorem~II.3.5~\cite{Petrich1984}, a semigroup $S$ is a Brandt semigroup if and only if $S$ is isomorphic to a Brandt $\lambda$-extension $B_\lambda(G)$ of a group  $G$.

A non-trivial inverse semigroup is called a \emph{primitive inverse semigroup} if all its non-zero idempotents are primitive~\cite{Petrich1984}. A semigroup $S$ is a primitive inverse semigroup if and only if $S$ is an orthogonal sum of Brandt semigroups~\cite[Theorem~II.4.3]{Petrich1984}.

In this paper all topological spaces are Hausdorff. If $Y$ is a subspace of a topological space $X$ and $A\subseteq Y$, then by $\operatorname{cl}_Y(A)$ and $\operatorname{int}_Y(A)$ we denote the topological closure and interior of $A$ in $Y$, respectively.

A subset $A$ of a topological space $X$ is called \emph{regular open} if $\operatorname{int}_X(\operatorname{cl}_X(A))=A$.

We recall that a topological space $X$ is said to be
\begin{itemize}
  \item \emph{semiregular} if $X$ has a base consisting of regular open subsets;
  \item \emph{compact} if each open cover of $X$ has a finite subcover;
  \item \emph{sequentially compact} if each sequence $\{x_i\}_{i\in\mathbb{N}}$ of $X$ has a convergent subsequence in $X$;
  \item \emph{$\omega$-bounded} if every countably infinite set in $X$ has the compact closure~\cite{GuldenFleischmanWeston1970};
  \item \emph{totally countably compact} if every countably infinite set in $X$ contains an infinite subset with the compact closure~\cite{Frolik1960};
  \item \emph{countably compact} if each open countable cover of $X$ has a finite subcover;
  \item \emph{countably compact at a subset} $A\subseteq X$ if every infinite subset $B\subseteq A$ has an accumulation  point $x$ in $X$;
  \item \emph{countably pracompact} if there exists a dense subset $A$ in $X$  such that $X$ is countably compact at $A$~\cite{Arkhangelskii1992};
  \item \emph{sequentially pseudocompact} if for each sequence $\{U_n\colon n\in\mathbb{N}\}$ of non-empty open subsets of the space $X$ there exist a point $x\in X$ and an infinite set $S\subset\mathbb{N}$ such that for each neighborhood $U$ of the point $x$ the set $\{n\in S\colon U_n\cap U=\varnothing\}$ is finite \cite{GutikRavsky2014??};
  \item $H$-\emph{closed} if $X$ is Hausdorff and $X$ is a closed subspace of every Hausdorff space in which it is contained \cite{AlexandroffUrysohn1929};
  \item \emph{pseudocompact} if each locally finite open cover of $X$ is finite.
\end{itemize}
According to Theorem~3.10.22 of \cite{Engelking1989}, a Tychonoff topological space $X$ is pseudocompact if and only if each continuous real-valued function on $X$ is bounded. Also, a Hausdorff topological space $X$ is pseudocompact if and only if every locally finite family of non-empty open subsets of $X$ is finite. Every compact space and every sequentially compact space are countably compact, every countably compact space is countably pracompact, and every countably pracompact space is pseudocompact (see \cite{Arkhangelskii1992}). We observe that pseudocompact spaces in topological literature also are called  \emph{lightly  compact} or \emph{feebly compact} (see \cite{Bagley-Connelly-McKnight-1958, Fernandez-Tkachenko-2014, Sanchis-Tkachenko-2012}).

We recall that the Stone-\v{C}ech compactification of a Tychonoff space $X$ is a
compact Hausdorff space $\beta X$ containing $X$ as a dense subspace so that each continuous map $f\colon X\rightarrow Y$ to a compact Hausdorff space $Y$ extends to a continuous map $\overline{f}\colon \beta X\rightarrow Y$ \cite{Engelking1989}.

A ({\it semi})\emph{topological semigroup} is a Hausdorff topological space with a (separately) continuous semigroup operation. A topological semigroup which is an inverse semigroup is called an \emph{inverse topological semigroup}. A \emph{topological inverse semigroup} is an inverse topological semigroup with continuous inversion. We observe that the inversion on a topological inverse semigroup is a homeomorphism (see \cite[Proposition~II.1]{EberhartSelden1969}). A Hausdorff topology $\tau$ on a (inverse) semigroup $S$ is called (\emph{inverse}) \emph{semigroup} if $(S,\tau)$ is a topological (inverse) semigroup. A {\it paratopological} (\emph{semitopological}) \emph{group} is a Hausdorff topological space with a jointly (separately) continuous group operation. A paratopological group with continuous inversion is a \emph{topological group}.

Let $\mathfrak{STSG}_0$ be a class of semitopological semigroups.

\begin{definition}[\cite{Gutik1999}]\label{def1}
Let $\lambda\geqslant 1$ be a cardinal and $(S,\tau)\in\mathfrak{STSG}_0$ be a semitopological monoid with zero. Let $\tau_{B}$ be a topology on $B_{\lambda}(S)$ such that
\begin{itemize}
    \item[a)] $\left(B_{\lambda}(S),\tau_{B}\right)\in \mathfrak{STSG}_0$; \; and
    \item[b)] for some $\alpha\in{\lambda}$ the topological subspace $(S_{\alpha,\alpha},\tau_{B}|_{S_{\alpha,\alpha}})$ is naturally homeomorphic to $(S,\tau)$.
\end{itemize}
Then $\left(B_{\lambda}(S), \tau_{B}\right)$ is called a {\it topological Brandt $\lambda$-extension of $(S, \tau)$ in $\mathfrak{STSG}_0$}.
\end{definition}

\begin{definition}[\cite{GutikPavlyk2006}]\label{def2}
Let $\lambda\geqslant 1$ be a cardinal and $(S,\tau)\in\mathfrak{STSG}_0$. Let $\tau_{B}$ be a topology on $B^0_{\lambda}(S)$ such that
\begin{itemize}
  \item[a)] $\left(B^0_{\lambda}(S),            \tau_{B}\right)\in\mathfrak{STSG}_0$;
  \item[b)] the topological subspace $(S_{\alpha,\alpha},\tau_{B}|_{S_{\alpha,\alpha}})$ is naturally homeomorphic to $(S,\tau)$ for some $\alpha\in{\lambda}$.
\end{itemize}
Then $\left(B^0_{\lambda}(S), \tau_{B}\right)$ is called a {\it topological Brandt $\lambda^0$-extension of $(S, \tau)$ in $\mathfrak{STSG}_0$}.
\end{definition}

Later, if $\mathfrak{STSG}_0$ coincides with the class of all semitopological semigroups we shall say that $\left(B^0_{\lambda}(S), \tau_{B}\right)$ (resp.,  $\left(B_{\lambda}(S), \tau_{B}\right)$) is called a {\it topological Brandt $\lambda^0$-extension} (resp., a \emph{topological Brandt $\lambda$-extension}) of $(S, \tau)$.

Algebraic properties of Brandt $\lambda^0$-extensions of monoids with zero, non-trivial homomorphisms between them, and a category whose objects are ingredients of the construction of such extensions were described in \cite{GutikRepovs2010}. Also, in  \cite{GutikPavlykReiter2009} and \cite{GutikRepovs2010} a category whose objects are ingredients in the constructions of finite (resp., compact, countably compact) topological Brandt $\lambda^0$-extensions of topological monoids with zeros were described.

Gutik and Repov\v{s} proved that any $0$-simple countably compact topological inverse semigroup is topologically isomorphic to a topological Brandt $\lambda$-extension $B_{\lambda}(H)$ of a countably compact topological group $H$ in the class of all topological inverse semigroups for some finite cardinal $\lambda\geqslant 1$ \cite{GutikRepovs2007}.  Also, every $0$-simple pseudocompact topological inverse semigroup is topologically isomorphic to a topological Brandt $\lambda$-extension $B_{\lambda}(H)$ of a pseudocompact topological group $H$ in the class of all topological inverse semigroups for some finite cardinal $\lambda\geqslant 1$ \cite{GutikPavlykReiter2011}. Next Gutik and Repov\v{s} showed in \cite{GutikRepovs2007} that the Stone-\v{C}ech compactification $\beta(T)$ of a $0$-simple countably compact topological inverse semigroup $T$ has a natural structure of a $0$-simple compact topological inverse semigroup. It was proved in \cite{GutikPavlykReiter2011} that the same is true for $0$-simple pseudocompact topological inverse semigroups.

In the paper \cite{BerezovskiGutikPavlyk2010} the structure of compact and countably compact primitive topological inverse semigroups was described and was showed that any countably compact primitive topological inverse semigroup embeds into a compact primitive topological inverse semigroup.

Comfort and Ross in \cite{ComfortRoss1966} proved that a Tychonoff product of an arbitrary non-empty family of pseudocompact topological groups is a pseudocompact topological group. Also, they proved there that the Stone-\v{C}ech compactification of a pseudocompact topological group has a natural structure of a compact topological group. Ravsky in \cite{Ravsky-arxiv1003.5343v5} generalized Comfort--Ross Theorem and proved that a Tychonoff product of an arbitrary non-empty family of pseudocompact paratopological groups is pseudocompact.

In the paper \cite{GutikPavlyk2013} it is described the structure of pseudocompact primitive topological inverse semigroups and it is shown that the Tychonoff product of an arbitrary non-empty family of pseudocompact primitive topological inverse semigroups is pseudocompact. Also, there is proved that the Stone-\v{C}ech compactification of a pseudocompact primitive topological inverse semigroup has a natural structure of a compact primitive topological inverse semigroup.

In the paper \cite{GutikRavsky2013??} we studied the structure of inverse primitive pseudocompact semitopological and topological semigroups. We find conditions when a maximal subgroup of an inverse primitive pseudocompact semitopological semigroup $S$ is a closed subset of $S$ and described the topological structure of such semiregular semigroup. Also there we described structure of pseudocompact topological Brandt $\lambda^0$-extensions of topological semigroups and semiregular (quasi-regular) primitive inverse topological semigroups. In \cite{GutikRavsky2013??} we shown that the inversion in a quasi-regular primitive inverse pseudocompact topological semigroup is continuous. Also there, an analogue of Comfort--Ross Theorem is proved for such semigroups: the Tychonoff product of an arbitrary non-empty family of primitive inverse semiregular pseudocompact semitopological semigroups with closed maximal subgroups is a pseudocompact space, and we described the structure of the Stone-\v{C}ech compactification of a Hausdorff primitive inverse countably compact semitopological semigroup $S$ such that every maximal subgroup of $S$ is a topological group.

In this paper we study the preserving of Tychonoff products of the pseudocompactness (resp., countable compactness, sequential compactness, $\omega$-boundedness, totally countable compactness, countable pracompactness, sequential pseudocompactness) by pseudocompact (and countably compact) to\-pological Brandt $\lambda_i^0$-extensions  of semitopological semitopological monoids with zero.
In particular we show that if $\big\{ \big(B^0_{\lambda_i}(S_i),\tau^0_{B(S_i)}\big) \colon i\in\mathscr{I}\big\}$ is a family of Hausdorff pseudocompact to\-pological Brandt $\lambda_i^0$-extension of pseudocompact semitopological monoids with zero such that the Tychonoff product $\prod\left\{ S_i \colon i\in\mathscr{I}\right\}$ is a pseudocompact space, then the direct product $\prod\big\{ \big(B^0_{\lambda_i}(S_i),\tau^0_{B(S_i)}\big) \colon i\in\mathscr{I}\big\}$ with the Tychonoff topology is a Hausdorff pseudocompact semitopological semigroup.

\section{Tychonoff products of pseudocompact topological Brandt $\lambda^0$-extensions of semitopological semigroups}

Later we need the following theorem from \cite{GutikPavlyk2013a}:

\begin{theorem}[{\cite[Theorem~12]{GutikPavlyk2013a}}]\label{theorem-2.1}
For any Hausdorff countably compact semitopological monoid $(S,\tau)$ with zero and for any cardinal $\lambda\geqslant 1$ there exists a unique Hausdorff countably compact  topological
Brandt $\lambda^0$-extension $\left(B^0_{\lambda}(S),\tau_{B}^S\right)$ of $(S,\tau)$ in the
class of semitopological semigroups, and the topology $\tau_{B}^S$
is generated by the base
$\mathscr{B}_B=\bigcup\left\{\mathscr{B}_B(t)\colon t\in
B^0_{\lambda}(S)\right\}$, where:
\begin{itemize}
    \item[$(i)$] $\mathscr{B}_B(t)=\big\{(U(s) \setminus\{ 0_S\})_{\alpha,\beta} \colon U(s)\in \mathscr{B}_S(s)\big\}$, where $t=(\alpha,s,\beta)$ is a non-zero element of    $B^0_{\lambda}(S)$, $\alpha,\beta\in\lambda$;

    \item[$(ii)$] $\mathscr{B}_B(0)=\Big\{U_{A}(0)= \bigcup_{(\alpha,\beta)\in(\lambda\times\lambda) \setminus A}S_{\alpha,\beta}\cup \bigcup_{(\gamma,\delta)\in A} (U(0_S))_{\gamma,\delta} \colon A \hbox{~is a finite subset of~} \lambda\times\lambda$ and  $U(0_S)\in\mathscr{B}_S(0_S)\Big\}$, where $0$ is the zero of $B^0_{\lambda}(S)$,
\end{itemize}
and $\mathscr{B}_S(s)$ is a base of the topology $\tau$ at the point $s\in S$.
\end{theorem}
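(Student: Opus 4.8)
The plan is to establish the three assertions—that $\mathscr{B}_B$ generates a Hausdorff countably compact semitopological semigroup topology making $\big(B^0_{\lambda}(S),\tau_{B}^S\big)$ a topological Brandt $\lambda^0$-extension of $(S,\tau)$ (existence), and that no other topology does so (uniqueness)—in that order; throughout I write $1_S$ for the identity and $0_S$ for the zero of the monoid $S$, and assume $1_S\ne 0_S$ (otherwise $B^0_\lambda(S)=\{0\}$ and everything is trivial). First I would verify that $\mathscr{B}_B$ satisfies the base axioms: every point lies in a member, and the only non-immediate intersection conditions are for two neighbourhoods $U_A(0)$, $U_{A'}(0)$ of the zero (handled by $U_{A\cup A'}(0)$ formed with $U(0_S)\cap U'(0_S)$) and for a neighbourhood of $0$ meeting a neighbourhood of a nonzero point (which reduces to a single slot). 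Hausdorffness splits into three cases: two nonzero points in distinct slots are separated by the disjoint sets $(S\setminus\{0_S\})_{\alpha,\beta}$ and $(S\setminus\{0_S\})_{\alpha',\beta'}$, while two nonzero points in one slot, or a nonzero point $(\alpha,s,\beta)$ against $0$, are separated by pulling back a Hausdorff separation of $s$ and $0_S$ in $S$. Separate continuity is the first substantial check: writing $\ell_{(\gamma,b,\delta)}$ and $r_{(\gamma,b,\delta)}$ slotwise, continuity at nonzero points reduces to continuity of $s\mapsto bs$ and $s\mapsto sb$ in the semitopological monoid $S$, and continuity into $0$ uses that only finitely many slots are singled out in any $U_A(0)$ together with continuity of the $S$-translations at $0_S$. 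Finally, reading off the trace of $\mathscr{B}_B$ on a diagonal slot shows $(S_{\alpha,\alpha},\tau_{B}^S|_{S_{\alpha,\alpha}})$ is naturally homeomorphic to $(S,\tau)$ for every $\alpha$, which is condition (b) of Definition~\ref{def2}.

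For countable compactness of $\big(B^0_{\lambda}(S),\tau_{B}^S\big)$ I would argue via accumulation points. Given an infinite set $X$, either some slot $S_{\alpha,\beta}$ meets $X$ in an infinite set—and since $S_{\alpha,\beta}$ is a natural copy of the countably compact space $S$, that set has an accumulation point inside the slot—or every slot meets $X$ finitely, so $X$ meets infinitely many slots and $0$ is an accumulation point, because each basic neighbourhood $U_A(0)$ contains every slot outside the finite set $A$ and hence contains infinitely many points of $X$. Thus every infinite subset has an accumulation point and the space is countably compact.

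For uniqueness, let $\big(B^0_{\lambda}(S),\sigma\big)$ be an arbitrary Hausdorff countably compact topological Brandt $\lambda^0$-extension in the class of semitopological semigroups, with $(S_{\alpha_0,\alpha_0},\sigma|)$ naturally homeomorphic to $S$. I would first transport this to every slot: the maps $x\mapsto (\gamma,1_S,\alpha_0)\cdot x\cdot(\alpha_0,1_S,\delta)$ and their inverses are compositions of (separately continuous) translations, so they restrict to natural homeomorphisms $S_{\alpha_0,\alpha_0}\to S_{\gamma,\delta}$; hence every slot is a natural copy of $S$ in $\sigma$. Next, for each $(\alpha,\beta)$ the retraction $f(x)=(\alpha,1_S,\alpha)\cdot x\cdot(\beta,1_S,\beta)$ is $\sigma$-continuous, fixes $S_{\alpha,\beta}$ pointwise and sends all other points to $0$; picking a $\sigma$-open $O$ with $O\cap S_{\alpha,\beta}=(S\setminus\{0_S\})_{\alpha,\beta}$ (possible since $\{0_S\}$ is closed in the Hausdorff space $S$, so $S\setminus\{0_S\}$ is open) yields $f^{-1}(O)=(S\setminus\{0_S\})_{\alpha,\beta}$. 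Thus each nonzero slot $(S\setminus\{0_S\})_{\alpha,\beta}$ is $\sigma$-open, whence each full slot $S_{\alpha,\beta}$ is $\sigma$-closed, and the $\sigma$-neighbourhoods of every nonzero point are exactly those of item $(i)$.

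It remains to identify the $\sigma$-neighbourhood filter at $0$, and this is where countable compactness does the decisive work. I would show that every $\sigma$-neighbourhood $W$ of $0$ contains all but finitely many full slots: otherwise one selects points $x_n\notin W$ in infinitely many distinct slots, and the resulting infinite set has an accumulation point $y$; but $y=0$ is excluded because $W$ misses all $x_n$, and $y\ne 0$ is excluded because the $\sigma$-open slot through $y$ meets the set in at most one point. Hence $A=\{(\alpha,\beta):S_{\alpha,\beta}\not\subseteq W\}$ is finite, and intersecting $W$ with each slot $S_{\gamma,\delta}$, $(\gamma,\delta)\in A$, produces $S$-neighbourhoods of $0_S$ whose common refinement $U(0_S)$ gives $W\supseteq U_A(0)$. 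Conversely each $U_A(0)$ is $\sigma$-open, since its complement is the finite union of the sets $(S\setminus U(0_S))_{\gamma,\delta}$, each closed in the $\sigma$-closed slot $S_{\gamma,\delta}$ and avoiding $0$. Therefore the $\sigma$-neighbourhood filter at $0$ is generated by $\{U_A(0)\}$, which together with the previous paragraph yields $\sigma=\tau_{B}^S$. The main obstacle is exactly this final step: the $T_2$ axiom and separate continuity pin down the topology only off the zero, and it is countable compactness alone that forces every neighbourhood of $0$ to engulf cofinitely many slots and thereby removes all freedom at the zero.
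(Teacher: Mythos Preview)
The paper does not actually prove this statement: Theorem~\ref{theorem-2.1} is quoted verbatim from \cite[Theorem~12]{GutikPavlyk2013a} and used as a black box, so there is no in-paper proof to compare against. Your argument stands on its own and is essentially correct; the structure (base axioms, Hausdorffness, separate continuity, countable compactness via the two-case accumulation-point dichotomy, and uniqueness by first pinning down the topology on each slot via the retractions $x\mapsto(\alpha,1_S,\alpha)x(\beta,1_S,\beta)$ and then using countable compactness to force the neighbourhood filter at $0$) is exactly the natural route and matches the spirit of the arguments in \cite{GutikPavlyk2013a} that the present paper repeatedly invokes (Lemmas~2, 3, and~9 there). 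Two small points worth tightening: when you say a $\sigma$-neighbourhood of $0$ contains ``all but finitely many full slots'' you are implicitly using that $0\in W$ lies in every $S_{\alpha,\beta}$, so the condition $S_{\alpha,\beta}\subseteq W$ is equivalent to $S^*_{\alpha,\beta}\subseteq W$; and in the retraction step you should note explicitly that the chosen $\sigma$-open $O$ with $O\cap S_{\alpha,\beta}=S^*_{\alpha,\beta}$ necessarily omits $0$ (since $0\in S_{\alpha,\beta}$), which is what makes $f^{-1}(O)=S^*_{\alpha,\beta}$ rather than something larger.
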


\begin{lemma}\label{lemma-2.2}
For any Hausdorff sequentially compact semitopological  monoid
$(S,\tau)$ with zero and for any cardinal $\lambda\geqslant 1$
the Hausdorff countably compact  topological
Brandt $\lambda^0$-extension
$\left(B^0_{\lambda}(S),\tau_{B}^S\right)$ of $(S,\tau)$ in the
class of semitopological semigroups is a sequentially compact space.
\end{lemma}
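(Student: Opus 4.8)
The plan is to use Theorem~\ref{theorem-2.1} to describe $\tau_B^S$ by its explicit base $\mathscr{B}_B$ (a sequentially compact space is countably compact, so the hypothesis of that theorem is met), to fix an arbitrary sequence $\{x_n\}_{n\in\mathbb{N}}$ in $B^0_\lambda(S)$, and to extract a convergent subsequence through a case analysis on the behaviour of the ``matrix coordinates'' of the terms. Write every non-zero term as $x_n=(\alpha_n,s_n,\beta_n)$ with $\alpha_n,\beta_n\in\lambda$ and $s_n\in S^*$.

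First I would dispose of the easy situations. If $x_n=0$ for infinitely many $n$, the corresponding constant subsequence converges to $0$; so after passing to a subsequence we may assume $x_n\ne 0$ for every $n$. Suppose next that some pair $(\alpha,\beta)\in\lambda\times\lambda$ satisfies $(\alpha_n,\beta_n)=(\alpha,\beta)$ for infinitely many $n$, and pass to that subsequence, so $x_n=(\alpha,s_n,\beta)$. Since $(S,\tau)$ is sequentially compact, $\{s_n\}$ has a subsequence $\{s_{n_k}\}$ converging to some $s\in S$. If $s\ne 0_S$, then $s\in S^*$, and since $s_{n_k}\in U(s)\setminus\{0_S\}$ for all large $k$ and every $U(s)\in\mathscr{B}_S(s)$, the form of $\mathscr{B}_B$ at the non-zero point $(\alpha,s,\beta)$ gives $x_{n_k}\to(\alpha,s,\beta)$. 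If $s=0_S$, I claim $x_{n_k}\to 0$: for $U_A(0)\in\mathscr{B}_B(0)$ either $(\alpha,\beta)\notin A$, whence $S_{\alpha,\beta}\subseteq U_A(0)$ contains all terms, or $(\alpha,\beta)\in A$, and then $x_{n_k}\in(U(0_S))_{\alpha,\beta}$ for all large $k$ because $s_{n_k}\to 0_S$ and $s_{n_k}\ne 0_S$.

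Finally there remains the case in which no pair of $\lambda\times\lambda$ is attained infinitely often by $\{(\alpha_n,\beta_n)\}$. Then a straightforward inductive extraction produces a subsequence $\{x_{n_k}\}$ whose coordinate pairs $(\alpha_{n_k},\beta_{n_k})$ are pairwise distinct, and I claim $x_{n_k}\to 0$: given $U_A(0)\in\mathscr{B}_B(0)$ with $A$ finite, only finitely many of the distinct pairs $(\alpha_{n_k},\beta_{n_k})$ can lie in $A$, so for all large $k$ one has $(\alpha_{n_k},\beta_{n_k})\notin A$ and hence $x_{n_k}\in S_{\alpha_{n_k},\beta_{n_k}}\subseteq U_A(0)$. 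Since these three cases are exhaustive, every sequence in $B^0_\lambda(S)$ has a convergent subsequence.

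I do not expect a serious obstacle: the last case uses nothing about $S$ and is forced by the ``cofinite on $\lambda\times\lambda$'' shape of the neighbourhood base $\mathscr{B}_B(0)$, while the only role of the hypothesis on $(S,\tau)$ is in the middle case. The one point requiring care is that a subsequence $\{s_{n_k}\}\subseteq S^*$ may converge in $S$ either to a point of $S^*$, yielding convergence to a non-zero element of $B^0_\lambda(S)$, or to $0_S$, yielding convergence to the zero of $B^0_\lambda(S)$; both must be checked against the two different types of basic neighbourhoods in $\mathscr{B}_B$.
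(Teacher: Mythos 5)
Your proof is correct, but it takes a genuinely different route from the paper's. The paper disposes of finite $\lambda$ by general theorems and, for infinite $\lambda$, realizes $B^0_{\lambda}(S)$ as a continuous image of $\mathscr{A}(\lambda\times\lambda)\times S$, where $\mathscr{A}(\lambda\times\lambda)$ is the one-point Alexandroff compactification of the discrete space of cardinality $\lambda\cdot\lambda=\lambda$: that space is scattered, hence sequentially compact, the product of two sequentially compact spaces is sequentially compact, and continuous images preserve sequential compactness; the base from Theorem~\ref{theorem-2.1} is used only to check that the quotient-like map $g$ is continuous. You instead argue by hand, extracting a convergent subsequence directly from the same base description, with the trichotomy (infinitely many zero terms / a matrix coordinate pair recurring infinitely often / pairwise distinct coordinate pairs after thinning) being exhaustive and each case handled correctly --- including the delicate point that a subsequence of $S^*$ may converge in $S$ either to a point of $S^*$ or to $0_S$, which you check against both kinds of basic neighbourhoods. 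Your argument is more elementary and uniform in $\lambda$ (no appeal to scatteredness, to Vaughan's theorem, or to the product theorem for sequential compactness), while the paper's construction of $g\colon\mathscr{A}(\lambda\times\lambda)\times S\to B^0_{\lambda}(S)$ pays for itself later, since the same map is reused in Theorems~\ref{theorem-2.4}, \ref{theorem-2.7}, \ref{theorem-2.12} and \ref{theorem-2.15} to transfer other covering-type properties through products.
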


\begin{proof}
In the case when $\lambda<\omega$ the statement of the lemma follows from Theorems~3.10.32 and 3.10.34 from \cite{Engelking1989}.

Next we suppose that $\lambda\geqslant\omega$. Let $\mathscr{A}(\lambda)$ be the one point Alexandroff compactification of the discrete space of cardinality $\lambda$. Then $\mathscr{A}(\lambda)$ is scattered because $\mathscr{A}(\lambda)$ has only one non-isolated point, and hence by Theorem~5.7 from \cite{VaughanHSTT} the space $\mathscr{A}(\lambda)$ is sequentially compact. Since cardinal $\lambda$ is infinite without loss of generality we can assume that $\lambda=\lambda\cdot\lambda$ and hence we can identify the space $\mathscr{A}(\lambda)$ with $\mathscr{A}(\lambda\times\lambda)$. Then by Theorem~3.10.35 from \cite{Engelking1989} the space $\mathscr{A}(\lambda\times\lambda)\times S$ is sequentially compact. Later we assume that $a$ is non-isolated point of the space $\mathscr{A}(\lambda\times\lambda)$. We define the map $g\colon\mathscr{A}(\lambda\times\lambda)\times S\to B^0_{\lambda}(S)$ by the formulae
\begin{equation*}
    g(a)=0 \qquad \hbox{and} \qquad g((\alpha,\beta,s))=
\left\{
  \begin{array}{cl}
    (\alpha,s,\beta), & \hbox{if~} s\in S\setminus\{0_s\};\\
    0, & \hbox{if~} s=0_S.
  \end{array}
\right.
\end{equation*}
Theorem~\ref{theorem-2.1} implies that so defined map $g$ is continuous and hence by Theorem~3.10.32 of \cite{Engelking1989} we get that the topological Brandt $\lambda^0$-extension $\left(B^0_{\lambda}(S),\tau_{B}^S\right)$ of $(S,\tau)$ in the class of semitopological semigroups is a sequentially compact space.
\end{proof}

Lemma~\ref{lemma-2.2} and Theorem~3.10.35 from \cite{Engelking1989} imply the following theorem:

\begin{theorem}\label{theorem-2.3}
Let $\left\{B^0_{\lambda_i}(S_i) \colon i\in\omega\right\}$ be a countable family of Hausdorff countably compact topological Brandt $\lambda^0_i$-extension of sequentially compact Hausdorff semitopological monoids. Then the direct product $\prod\left\{B^0_{\lambda_i}(S_i)\colon i\in\omega\right\}$ with the Tychonoff topology is a Hausdorff sequentially compact semitopological semigroup.
\end{theorem}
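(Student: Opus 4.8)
The plan is to reduce Theorem~\ref{theorem-2.3} to a combination of Lemma~\ref{lemma-2.2} and the standard product theorem for sequential compactness of countable products (Theorem~3.10.35 of \cite{Engelking1989}). First I would observe that for each $i\in\omega$ the space $S_i$ is a Hausdorff sequentially compact semitopological monoid with zero, and $\lambda_i\geqslant 1$ is a cardinal, so Lemma~\ref{lemma-2.2} applies and gives that the Hausdorff countably compact topological Brandt $\lambda_i^0$-extension $\big(B^0_{\lambda_i}(S_i),\tau_B^{S_i}\big)$ of $(S_i,\tau_i)$ in the class of semitopological semigroups is a sequentially compact space. (Here I would also note that by Theorem~\ref{theorem-2.1} such an extension is unique, so the hypothesis ``$B^0_{\lambda_i}(S_i)$ is a Hausdorff countably compact topological Brandt $\lambda_i^0$-extension'' pins it down to exactly this space.)

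Next I would invoke Theorem~3.10.35 of \cite{Engelking1989}, which asserts that a countable product of sequentially compact spaces is sequentially compact: since each factor $B^0_{\lambda_i}(S_i)$ is sequentially compact, the Tychonoff product $\prod\{B^0_{\lambda_i}(S_i)\colon i\in\omega\}$ is a sequentially compact topological space. It remains to check that this product is a Hausdorff semitopological semigroup. Hausdorffness is immediate because a product of Hausdorff spaces is Hausdorff. For the algebraic/topological part: each $B^0_{\lambda_i}(S_i)$ carries a semigroup operation making it a semitopological semigroup, the coordinatewise operation on the product is associative, and separate continuity of the product operation follows coordinatewise from separate continuity in each factor (translations on the product are products of the translations on the factors, hence continuous). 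Thus $\prod\{B^0_{\lambda_i}(S_i)\colon i\in\omega\}$ with the Tychonoff topology and coordinatewise multiplication is a Hausdorff semitopological semigroup, completing the proof.

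There is no real obstacle here; the content of the theorem is entirely in Lemma~\ref{lemma-2.2}, and the product step is a citation. The only point requiring a word of care is that sequential compactness is productive only for countably many factors — which is exactly why the statement is restricted to a family indexed by $\omega$ — so I would make sure the appeal to Theorem~3.10.35 of \cite{Engelking1989} is stated for a countable index set. One could phrase the whole argument in two sentences, but for clarity I would spell out the two ingredients (per-factor sequential compactness via Lemma~\ref{lemma-2.2}, then countable productivity) and then remark that the semitopological semigroup structure passes to products coordinatewise.
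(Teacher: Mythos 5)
Your proposal is correct and coincides with the paper's argument: the paper derives Theorem~\ref{theorem-2.3} in one line from Lemma~\ref{lemma-2.2} (each factor is sequentially compact) together with Theorem~3.10.35 of \cite{Engelking1989} (countable products of sequentially compact spaces are sequentially compact). Your additional remarks on uniqueness of the extension and on the coordinatewise semitopological semigroup structure are routine and consistent with what the paper leaves implicit.
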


\begin{theorem}\label{theorem-2.4}
Let $\left\{ B^0_{\lambda_i}(S_i) \colon i\in\mathscr{I}\right\}$ be a non-empty family of Hausdorff countably compact topological Brandt $\lambda^0_i$-extension of countably compact Hausdorff semitopological monoids such that the Tychonoff product $\prod\left\{ S_i \colon i\in\mathscr{I}\right\}$ is a countably compact space. Then the direct product  $\prod\left\{ B^0_{\lambda_i}(S_i) \colon i\in\mathscr{I}\right\}$ with the Tychonoff topology is a Hausdorff countably compact semitopological semigroup.
\end{theorem}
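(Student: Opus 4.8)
The plan is to reduce the countable compactness of $\prod\{B^0_{\lambda_i}(S_i)\colon i\in\mathscr I\}$ to that of $\prod\{S_i\colon i\in\mathscr I\}$ together with the explicit description of the topology $\tau_B^{S_i}$ given in Theorem~\ref{theorem-2.1}. First I would recall the standard fact (Theorem~3.10.35 of \cite{Engelking1989} type) that a Tychonoff product is countably compact as soon as every countable subproduct is, so it suffices to handle a countably infinite index set; write $\mathscr I$ (or a countable subset on which a prescribed sequence is nonconstant) and fix a sequence $\big(x^{(n)}\big)_{n\in\omega}$ in $P:=\prod_i B^0_{\lambda_i}(S_i)$, with $x^{(n)}=\big(x^{(n)}_i\big)_i$ and $x^{(n)}_i$ either $0$ or of the form $\big(\alpha^{(n)}_i,s^{(n)}_i,\beta^{(n)}_i\big)$. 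I need to produce an accumulation point of $\{x^{(n)}\}$ in $P$.

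The key step is a pigeonhole/stabilisation argument on the "matrix coordinates'' run through the one-point compactification. For each $i$ form the compact space $\mathscr A(\lambda_i\times\lambda_i)$ (one-point Alexandroff compactification of the discrete set $\lambda_i\times\lambda_i$), and send $x^{(n)}_i$ to the pair $\big(\alpha^{(n)}_i,\beta^{(n)}_i\big)\in\lambda_i\times\lambda_i$ when $x^{(n)}_i\neq 0$, and to the distinguished point $a_i$ when $x^{(n)}_i=0$; simultaneously record the $S_i$-coordinate $s^{(n)}_i$ (putting $s^{(n)}_i=0_{S_i}$ when $x^{(n)}_i=0$). This defines a sequence in the product $\big(\prod_i \mathscr A(\lambda_i\times\lambda_i)\big)\times\big(\prod_i S_i\big)$. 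The first factor is compact (a product of compact spaces) and the second is countably compact by hypothesis; since a product of a compact space with a countably compact space is countably compact (Theorem~3.10.35(?) of \cite{Engelking1989}), this sequence has an accumulation point $\big((b_i)_i,(t_i)_i\big)$. Now define $x\in P$ coordinatewise by: if $b_i=a_i$ (or $t_i=0_{S_i}$) put $x_i:=0$; otherwise $b_i=(\alpha_i,\beta_i)$ is an isolated point of $\mathscr A(\lambda_i\times\lambda_i)$ and we set $x_i:=(\alpha_i,t_i,\beta_i)$. I would then verify that $x$ is an accumulation point of $\{x^{(n)}\}$ in $P$: a basic neighbourhood of $x$ restricts only finitely many coordinates $i\in F\subset\mathscr I$, and for each such $i$ the neighbourhood of $x_i$ is, by Theorem~\ref{theorem-2.1}, either of the form $\big(U(t_i)\setminus\{0_{S_i}\}\big)_{\alpha_i,\beta_i}$ (when $x_i\neq 0$) or of the form $U_{A_i}(0)$ (when $x_i=0$); in the first case the constraints "$(\alpha^{(n)}_i,\beta^{(n)}_i)=(\alpha_i,\beta_i)$'' and "$s^{(n)}_i\in U(t_i)$'' are exactly captured by the neighbourhood of $(b_i,t_i)=((\alpha_i,\beta_i),t_i)$ chosen in the product, while in the second case the isolatedness of points $\neq a_i$ in $\mathscr A(\lambda_i\times\lambda_i)$ forces the matrix coordinates eventually outside the finite set $A_i$ except when $s^{(n)}_i$ lands in $U(0_{S_i})$. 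Hence infinitely many $n$ satisfy all the (finitely many) constraints at once, so $x$ is an accumulation point.

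The main obstacle is handling the zero coordinate cleanly: near $0\in B^0_{\lambda_i}(S_i)$ the basic open sets $U_{A_i}(0)$ are "large'' — they miss only finitely many matrix slots, and on those slots they still allow the $S_i$-neighbourhood $U(0_{S_i})$ of the zero of $S_i$ — so one must check that the accumulation point $\big((b_i),(t_i)\big)$ genuinely translates into membership in $U_{A_i}(0)$. Concretely, if $b_i=a_i$ then for any neighbourhood of $a_i$ all but finitely many $n$ have $(\alpha^{(n)}_i,\beta^{(n)}_i)$ outside any prescribed finite set, putting $x^{(n)}_i\in\bigcup_{(\gamma,\delta)\notin A_i}(S_i)_{\gamma,\delta}\subseteq U_{A_i}(0)$; if instead $b_i$ is isolated but $t_i=0_{S_i}$, then $(\alpha^{(n)}_i,\beta^{(n)}_i)$ is eventually the fixed pair $(\alpha_i,\beta_i)$ and $s^{(n)}_i$ is frequently in $U(0_{S_i})$, again landing in $U_{A_i}(0)$ (whether or not $(\alpha_i,\beta_i)\in A_i$). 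Once these zero-coordinate bookkeeping cases are dispatched, the finite intersection over $i\in F$ of "frequently satisfied'' conditions is itself frequently satisfied by the definition of accumulation point in the auxiliary product, which finishes the proof; the only genuinely new ingredient beyond Theorem~\ref{theorem-2.1} and Lemma~\ref{lemma-2.2}'s circle of ideas is the observation that $\prod_i\mathscr A(\lambda_i\times\lambda_i)$ is compact, which is immediate.
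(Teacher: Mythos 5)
Your core argument is essentially the paper's own proof: both lift the problem to the auxiliary space $\prod_{i}\bigl(\mathscr{A}(\lambda_i\times\lambda_i)\times S_i\bigr)\cong\prod_{i}\mathscr{A}(\lambda_i\times\lambda_i)\times\prod_{i}S_i$, observe that this is countably compact as the product of a compact space with the (by hypothesis) countably compact $\prod_i S_i$ (Corollary~3.10.14 of Engelking), and push forward along the continuous surjection $\prod_i g_i$; the paper simply cites preservation of countable compactness under continuous maps where you verify the cluster-point transfer by hand, and your bookkeeping at the zero coordinates is correct. One caveat: your opening ``standard fact'' that a Tychonoff product is countably compact as soon as every countable subproduct is, is not a theorem (countable compactness is notoriously non-productive, and a cluster point found in a countable subproduct need not extend to one in the full product); fortunately this reduction is never used, since the auxiliary-product argument works verbatim for an arbitrary index set $\mathscr{I}$, so you should simply delete that first step.
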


\begin{proof}
For every infinite cardinal $\lambda_i$, $i\in\mathscr{I}$, we shall repeat the construction proposed in the proof of Lemma~\ref{lemma-2.2}. Let $\mathscr{A}(\lambda_i)$ be the one point Alexandroff compactification of the discrete space of cardinality $\lambda_i$. Since cardinal $\lambda_i$ is infinite without loss of generality we can assume that $\lambda_i=\lambda_i\cdot\lambda_i$ and hence we can identify the space $\mathscr{A}(\lambda_i)$ with $\mathscr{A}(\lambda_i\times\lambda_i)$. Later we assume that $a_i$ is non-isolated point of the space $\mathscr{A}(\lambda_i\times\lambda_i)$. We define the map $g_i\colon\mathscr{A}(\lambda_i\times\lambda_i)\times S_i\to B^0_{\lambda_i}(S_i)$ by the formulae
\begin{equation}\label{eq-1}
    g_i(a_i)=0_i \qquad \hbox{and} \qquad g_i((\alpha_i,\beta_i,s_i))=
\left\{
  \begin{array}{cl}
    (\alpha_i,s_i,\beta_i), & \hbox{if~} s_i\in S\setminus\{0_{S_i}\};\\
    0_i, & \hbox{if~} s=0_{S_i},
  \end{array}
\right.
\end{equation}
where $0_i$ and $0_{S_i}$ are zeros of the semigroup $B^0_{\lambda_i}(S_i)$ and the monoid $S_i$, respectively. Theorem~\ref{theorem-2.1} implies that so defined map $g_i$ is continuous.

In the case when cardinal $\lambda_i$, $i\in\mathscr{I}$, is finite we put $\mathscr{A}(\lambda_i\times\lambda_i)$ is the discrete space of cardinality $\lambda_i^2+1$ with the fixed point $a_i\in \mathscr{A}(\lambda_i\times\lambda_i)$. Next we define the map $g_i\colon\mathscr{A}(\lambda_i\times\lambda_i)\times S_i\to B^0_{\lambda_i}(S_i)$ by the formulae (\ref{eq-1}), where $0_i$ and $0_{S_i}$ are zeros of the semigroup $B^0_{\lambda_i}(S_i)$ and the monoid $S_i$, respectively. It is obviously that such defined map $g_i$ is continuous. Then the space $\prod_{i\in\mathscr{I}}\mathscr{A}(\lambda_i\times\lambda_i)\times S_i$ is homeomorphic to $\prod_{i\in\mathscr{I}}\mathscr{A}(\lambda_i\times\lambda_i)\times \prod_{i\in\mathscr{I}} S_i$ and hence by Theorem~3.2.4 and Corollary~3.10.14 from \cite{Engelking1989} the Tychonoff product $\prod_{i\in\mathscr{I}}\mathscr{A}(\lambda_i\times\lambda_i)\times S_i$ is countably compact. Later we define the map $g\colon \prod_{i\in\mathscr{I}} \mathscr{A}(\lambda_i\times\lambda_i)\times S_i\to \prod_{i\in\mathscr{I}} B^0_{\lambda_i}(S_i)$ by putting $g= \prod_{i\in\mathscr{I}}g_i$. Since for any $i\in\mathscr{I}$ the map $g_i\colon\mathscr{A}(\lambda_i\times\lambda_i)\times S_i\to B^0_{\lambda_i}(S_i)$ is continuous, Theorem~\ref{theorem-2.1} and Proposition~2.3.6 of \cite{Engelking1989} imply that $g$ is continuous too. Therefore by Theorem~3.10.5 from \cite{Engelking1989} we obtain that the direct product $\prod\{B^0_{\lambda_i}(S_i)\colon i\in\mathscr{I}\}$ with the Tychonoff topology is a Hausdorff countably compact semitopological semigroup.
\end{proof}

\begin{lemma}\label{lemma-2.5}
For any Hausdorff totally countably compact semitopological monoid $(S,\tau)$ with zero and for any cardinal $\lambda\geqslant 1$ the Hausdorff countably compact  topological Brandt $\lambda^0$-extension $\left(B^0_{\lambda}(S),\tau_{B}^S\right)$ of $(S,\tau)$ in the class of semitopological semigroups is a totally countably compact space.
\end{lemma}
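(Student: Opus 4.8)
The plan is to follow the strategy already used in the proof of Lemma~\ref{lemma-2.2}, replacing \emph{sequential compactness} by \emph{total countable compactness} throughout, and to isolate two elementary closure properties of the class of totally countably compact spaces which carry all the weight. First I would record that total countable compactness is preserved by the two operations we actually need: (i) if $K$ is compact and $X$ is totally countably compact, then $K\times X$ is totally countably compact; and (ii) if $f\colon Y\to Z$ is a continuous surjection with $Y$ totally countably compact and $Z$ Hausdorff, then $Z$ is totally countably compact.

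For (i), given a countably infinite $D\subseteq K\times X$ I would project to $X$. If $\pi_X(D)$ is infinite, total countable compactness of $X$ yields an infinite $T\subseteq\pi_X(D)$ with $\operatorname{cl}_X(T)$ compact; choosing for each point of $T$ a single preimage in $D$ produces an infinite subset $E\subseteq D$ with $E\subseteq K\times\operatorname{cl}_X(T)$, and the latter set is compact by the Tychonoff theorem and closed in $K\times X$, so $\operatorname{cl}_{K\times X}(E)$ is compact. If $\pi_X(D)$ is finite, then some fibre $K\times\{x_0\}$ meets $D$ in an infinite set, whose closure is contained in the compact set $K\times\{x_0\}$. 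For (ii), given a countably infinite $D\subseteq Z$ I would pick $y_d\in f^{-1}(d)$ for each $d\in D$; the set $\{y_d\colon d\in D\}$ is countably infinite (the $y_d$ are pairwise distinct since the $d$ are), so it contains an infinite subset $T$ with $\operatorname{cl}_Y(T)$ compact, and then $f(\operatorname{cl}_Y(T))$ is compact, hence closed in the Hausdorff space $Z$; since $f$ is injective on $\{y_d\}$, $f(T)$ is an infinite subset of $D$ and $\operatorname{cl}_Z(f(T))$ is a closed subset of the compact set $f(\operatorname{cl}_Y(T))$.

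With (i) and (ii) in hand the lemma follows exactly as in Lemma~\ref{lemma-2.2}. As there (and, for the finite-cardinal case, as in the proof of Theorem~\ref{theorem-2.4}), let $\mathscr{A}(\lambda\times\lambda)$ be the one-point Alexandroff compactification of the discrete space of cardinality $\lambda\cdot\lambda=\lambda$ when $\lambda\geqslant\omega$, and the discrete space of cardinality $\lambda^2+1$ with a fixed point $a$ when $\lambda<\omega$; in both cases $\mathscr{A}(\lambda\times\lambda)$ is compact. By fact (i) the space $\mathscr{A}(\lambda\times\lambda)\times S$ is totally countably compact. Define $g\colon\mathscr{A}(\lambda\times\lambda)\times S\to B^0_{\lambda}(S)$ by the same formulae as in Lemma~\ref{lemma-2.2}, sending $a$ to $0$, $(\alpha,\beta,s)$ to $(\alpha,s,\beta)$ when $s\neq 0_S$, and $(\alpha,\beta,0_S)$ to $0$. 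Theorem~\ref{theorem-2.1} shows that $g$ is continuous, and $g$ is plainly surjective, so by fact (ii) (recall $B^0_{\lambda}(S)$ is Hausdorff) the space $B^0_{\lambda}(S)=g\big(\mathscr{A}(\lambda\times\lambda)\times S\big)$ is totally countably compact. One should also remark that a totally countably compact space is countably compact, so the hypotheses of Theorem~\ref{theorem-2.1} hold and $\left(B^0_{\lambda}(S),\tau_{B}^S\right)$ is genuinely the extension named in the statement.

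I expect the only point demanding real care to be fact (i): the bookkeeping that a preimage selection converts an infinite $T\subseteq\pi_X(D)$ back into an infinite subset of $D$, together with the verification that the relevant closure lands inside $K\times\operatorname{cl}_X(T)$; everything else is a transcription of the argument already given for sequential compactness. A self-contained alternative bypasses (i) and (ii) by arguing directly on a countably infinite $D\subseteq B^0_{\lambda}(S)$: if infinitely many points of $D$ share one coordinate pair $(\alpha,\beta)$, one applies total countable compactness of $S$ inside the copy $S_{\alpha,\beta}$; otherwise one extracts a subset of $D$ with pairwise distinct coordinate pairs, and checks that its closure in $B^0_{\lambda}(S)$ is that subset together with $0$, a compact convergent-sequence-type space. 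The image-of-a-product argument is, however, shorter and matches the rest of the section.
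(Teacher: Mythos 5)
Your proof is correct, but it takes a different route from the one the paper actually uses for this lemma. The paper argues directly on a countably infinite subset $A\subseteq B^0_{\lambda}(S)$: it sets $\mathscr{J}=\{(\alpha,\beta)\colon A\cap S_{\alpha,\beta}\neq\emptyset\}$; if $\mathscr{J}$ is finite it invokes total countable compactness of $S$ inside the finitely many closed copies $S_{\alpha,\beta}$, and if $\mathscr{J}$ is infinite it selects one point $a_{\alpha,\beta}\in A\cap S_{\alpha,\beta}$ per pair and observes that $K=\{0\}\cup\{a_{\alpha,\beta}\}$ is compact (every basic neighbourhood of $0$ swallows all but finitely many whole blocks $S_{\alpha,\beta}$) with $K\cap A$ infinite --- i.e.\ precisely the ``self-contained alternative'' you sketch in your last paragraph. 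Your main argument instead transports total countable compactness through the continuous surjection $g\colon\mathscr{A}(\lambda\times\lambda)\times S\to B^0_{\lambda}(S)$, which is the strategy the paper reserves for Lemma~2.2 and for the product statement in Theorem~2.7; your two auxiliary facts are both correctly proved (in (ii) the Hausdorffness of the target is used exactly where it should be, to get $f(\operatorname{cl}_Y(T))$ closed, and in (i) the preimage selection does return an infinite subset of $D$ because distinct projections force distinct points), and fact (ii) is exactly the ``simple verification'' the paper invokes without proof in Theorem~2.7. What your version buys is uniformity with the rest of the section and reusable lemmas; what the paper's direct version buys is brevity for this one statement, at the cost of having to check by hand that the selected set together with $0$ has compact closure. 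Your added remark that total countable compactness implies countable compactness, so that Theorem~2.1 applies, is a point the paper leaves implicit and is worth keeping.
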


\begin{proof}
In the case when $\lambda<\omega$ the statement of the lemma is trivial. So we suppose that $\lambda\geqslant\omega$.

Let $A$ be an arbitrary countably infinite subset of $\left(B^0_{\lambda}(S),\tau_{B}^S\right)$. Put $\mathscr{J}=\{(\alpha,\beta)\in\lambda\times\lambda\colon A\cap S_{\alpha,\beta}\ne\emptyset\}$. If the set $\mathscr{J}$ is finite then total countable compactness of the space $(S,\tau)$ and Lemma~2 of \cite{GutikPavlyk2013a} imply the statement of the lemma. So we suppose that the set $\mathscr{J}$ is infinite. For each pair of indices $(\alpha,\beta)\in \mathscr{J}$ choose
a point $a_{\alpha,\beta}\in A\cap S_{\alpha,\beta}$ and put $K=\{0\}\cup \{a_{\alpha,\beta}\colon (\alpha,\beta)\in \mathscr{J}\}$. Then the definition of the topology $\tau_{B}^S$ on $B^0_{\lambda}(S)$ implies that $K$ is a compact subset of the $\left(B^0_{\lambda}(S),\tau_{B}^S\right)$ and $K\cap A$ is infinite. This completes the proof of the lemma.
\end{proof}

Lemma~\ref{lemma-2.5} and Theorem~4.3 from \cite{Frolik1960} imply the following theorem:

\begin{theorem}\label{theorem-2.6}
Let $\left\{B^0_{\lambda_i}(S_i) \colon i\in\omega\right\}$ be a countable family of Hausdorff countably compact topological Brandt $\lambda^0_i$-extension of totally countably compact Hausdorff semitopological monoids. Then the direct product $\prod\left\{B^0_{\lambda_i}(S_i)\colon i\in\omega\right\}$ with the Tychonoff topology is a Hausdorff totally countably compact semitopological semigroup.
\end{theorem}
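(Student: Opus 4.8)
The plan is to reduce the statement to Lemma~\ref{lemma-2.5} together with a routine diagonal argument for countable Tychonoff products. First note that for each $i\in\omega$ the space $B^0_{\lambda_i}(S_i)$ satisfies the hypotheses of Lemma~\ref{lemma-2.5}: it is a Hausdorff countably compact topological Brandt $\lambda^0_i$-extension of the totally countably compact Hausdorff semitopological monoid $S_i$. Hence, by that lemma, each factor $B^0_{\lambda_i}(S_i)$ is itself a totally countably compact space. The product $X=\prod_{i\in\omega}B^0_{\lambda_i}(S_i)$ is Hausdorff as a product of Hausdorff spaces, and it is a semitopological semigroup, since after composing coordinatewise multiplication with a projection $\pi_j\colon X\to B^0_{\lambda_j}(S_j)$ each one-sided translation on $X$ becomes $\pi_j$ followed by a one-sided translation on the semitopological semigroup $B^0_{\lambda_j}(S_j)$, hence is continuous. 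So it remains only to verify that $X$ is totally countably compact, i.e.\ that a countable product of totally countably compact spaces is totally countably compact; this is Theorem~4.3 of \cite{Frolik1960}, but for completeness I would argue it directly as follows.

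Fix a countably infinite set $A\subseteq X$; I want an infinite $B\subseteq A$ with $\operatorname{cl}_X(B)$ compact. Write $X_i=B^0_{\lambda_i}(S_i)$ and build a decreasing chain $A=A_{-1}\supseteq A_0\supseteq A_1\supseteq\cdots$ of infinite subsets of $A$ such that $\operatorname{cl}_{X_i}(\pi_i(A_i))$ is compact for every $i$. Given $A_{i-1}$: if $\pi_i(A_{i-1})$ is finite, choose a point of $X_i$ whose $\pi_i$-fibre meets $A_{i-1}$ in an infinite set and let $A_i$ be that intersection, so that $\pi_i(A_i)$ is a singleton; if $\pi_i(A_{i-1})$ is infinite, total countable compactness of $X_i$ yields an infinite $C_i\subseteq\pi_i(A_{i-1})$ with $\operatorname{cl}_{X_i}(C_i)$ compact, and choosing one preimage in $A_{i-1}$ of each point of $C_i$ gives an infinite $A_i\subseteq A_{i-1}$ with $\pi_i(A_i)\subseteq C_i$, so that $\operatorname{cl}_{X_i}(\pi_i(A_i))$ is a closed subset of the compact set $\operatorname{cl}_{X_i}(C_i)$.

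Now diagonalize: pick distinct points $b_n\in A_n\setminus\{b_0,\dots,b_{n-1}\}$ (possible since every $A_n$ is infinite) and set $B=\{b_n\colon n\in\omega\}$. For each fixed $i$ and every $n\geqslant i$ one has $b_n\in A_n\subseteq A_i$, hence $\pi_i(B)\subseteq\pi_i(A_i)\cup\{\pi_i(b_0),\dots,\pi_i(b_{i-1})\}$, and therefore $\operatorname{cl}_{X_i}(\pi_i(B))$ is compact, being the union of a compact set and a finite set. Consequently $B\subseteq\prod_{i\in\omega}\operatorname{cl}_{X_i}(\pi_i(B))$, the latter product is compact by Tychonoff's theorem \cite{Engelking1989}, and $\operatorname{cl}_X(B)$ is a closed subspace of it, hence compact. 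Thus $A$ contains the required infinite subset, so $X$ is totally countably compact, and combined with the first paragraph this proves the theorem.

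The argument is essentially bookkeeping; the only points requiring a little care are the dichotomy in the inductive step and keeping the correction terms $\pi_i(B)\setminus\pi_i(A_i)$ finite, which is guaranteed by indexing the diagonal by an initial segment. No genuine obstacle arises beyond Lemma~\ref{lemma-2.5} itself, since the substantive topological content — that a totally countably compact Brandt $\lambda^0$-extension exists over each $S_i$ — has already been established there.
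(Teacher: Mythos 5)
Your proposal is correct and follows essentially the same route as the paper: apply Lemma~\ref{lemma-2.5} to each factor and then invoke the fact that countable Tychonoff products preserve total countable compactness, which the paper simply cites as Theorem~4.3 of Fro\-l\'{\i}k. The only difference is that you supply a self-contained (and correct) diagonal proof of that product theorem, which is a harmless elaboration rather than a different approach.
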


\begin{theorem}\label{theorem-2.7}
Let $\left\{ B^0_{\lambda_i}(S_i) \colon i\in\mathscr{I}\right\}$ be a non-empty family of Hausdorff countably compact topological Brandt $\lambda^0_i$-extension of Hausdorff totally countably compact semitopological monoids such that the Tychonoff product $\prod\left\{ S_i \colon i\in\mathscr{I}\right\}$ is a totally countably compact space. Then the direct product $\prod\left\{ B^0_{\lambda_i}(S_i) \colon i\in\mathscr{I}\right\}$ with the Tychonoff topology is a totally countably compact semitopological semigroup.
\end{theorem}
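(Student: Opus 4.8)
The plan is to mimic the proof of Theorem~\ref{theorem-2.4}, replacing the one-point Alexandroff compactification with an auxiliary space witnessing total countable compactness and using the factorization of each $B^0_{\lambda_i}(S_i)$ through a continuous image of a product $X_i\times S_i$. For each index $i\in\mathscr{I}$ with $\lambda_i$ infinite, I would again set $\mathscr{A}(\lambda_i\times\lambda_i)$ to be the one-point Alexandroff compactification of the discrete set of cardinality $\lambda_i=\lambda_i\cdot\lambda_i$, which is compact and hence totally countably compact; for $\lambda_i$ finite I would take the discrete space of cardinality $\lambda_i^2+1$, again compact. The maps $g_i\colon\mathscr{A}(\lambda_i\times\lambda_i)\times S_i\to B^0_{\lambda_i}(S_i)$ defined by the formula~(\ref{eq-1}) are continuous by Theorem~\ref{theorem-2.1}, and their product $g=\prod_{i\in\mathscr{I}}g_i$ is continuous by Proposition~2.3.6 of \cite{Engelking1989}. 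Moreover $g$ is surjective, so $\prod_{i\in\mathscr{I}}B^0_{\lambda_i}(S_i)$ is a continuous image of $\prod_{i\in\mathscr{I}}\big(\mathscr{A}(\lambda_i\times\lambda_i)\times S_i\big)$, which is homeomorphic to $\big(\prod_{i\in\mathscr{I}}\mathscr{A}(\lambda_i\times\lambda_i)\big)\times\big(\prod_{i\in\mathscr{I}}S_i\big)$.

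The first substantive step is therefore to verify that the factor $\prod_{i\in\mathscr{I}}\mathscr{A}(\lambda_i\times\lambda_i)$ is totally countably compact. Since each $\mathscr{A}(\lambda_i\times\lambda_i)$ is compact, this product is compact by the Tychonoff theorem, hence totally countably compact. The second substantive step is that the product of two totally countably compact spaces, one of which is compact, is totally countably compact; here $\prod_{i\in\mathscr{I}}S_i$ is totally countably compact by hypothesis, so $\big(\prod_{i\in\mathscr{I}}\mathscr{A}(\lambda_i\times\lambda_i)\big)\times\big(\prod_{i\in\mathscr{I}}S_i\big)$ is totally countably compact by Theorem~4.3 of \cite{Frolik1960} (the same productivity result invoked for Theorem~\ref{theorem-2.6}). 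The final step is that total countable compactness is preserved by continuous images: given a countably infinite subset $B$ of $\prod_{i\in\mathscr{I}}B^0_{\lambda_i}(S_i)$, pull back a countably infinite subset $B'$ of the domain with $g(B')\subseteq B$, extract from $B'$ an infinite subset $C'$ with compact closure, and then $\operatorname{cl}\big(g(C')\big)$ is a compact set containing the infinite set $g(C')\subseteq B$ (with $g(C')$ infinite because we may choose $B'$ so that $g\restriction B'$ is injective).

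Putting these together: $g$ witnesses $\prod_{i\in\mathscr{I}}B^0_{\lambda_i}(S_i)$ as a continuous image of a totally countably compact space, so it is totally countably compact; Hausdorffness and the semitopological semigroup structure follow exactly as in Theorem~\ref{theorem-2.4}, since each $B^0_{\lambda_i}(S_i)$ is a Hausdorff semitopological semigroup and a Tychonoff product of such is again one. The only point requiring genuine care is the preservation of total countable compactness under the surjection $g$: one must be slightly careful that the chosen infinite subset of the domain maps to an \emph{infinite} subset of the codomain, which is arranged by thinning $B'$ so that $g$ is injective on it (possible because $g$ has at most $|\mathscr{A}(\lambda_i\times\lambda_i)|$-to-one fibers only over the zero, and in any case an infinite set always contains an infinite subset on which a given map is injective unless some fiber is infinite, a case that can be handled separately since a single fiber's worth of points maps to one point whose closure is trivially compact). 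I expect this fiber-control argument, rather than any of the productivity facts, to be the main (minor) obstacle.
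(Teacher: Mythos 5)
Your proposal is correct and follows essentially the same route as the paper: the same auxiliary spaces $\mathscr{A}(\lambda_i\times\lambda_i)$ and maps $g_i$ from Theorem~2.4, the same appeal to Theorem~4.3 of Fr\'{o}lik for the productivity step, and the same final observation that a continuous (surjective) image of a totally countably compact space is totally countably compact. The only difference is that you spell out the fiber/injectivity detail in that last step, which the paper dismisses as a ``simple verification''; your one-preimage-per-point selection handles it cleanly.
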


\begin{proof}
Let for every $i\in\mathscr{I}$, $\mathscr{A}(\lambda_i\times\lambda_i)$ be a space and $g_i\colon\mathscr{A}(\lambda_i\times\lambda_i)\times S_i\to B^0_{\lambda_i}(S_i)$ be a map defined in the proof of Theorem~\ref{theorem-2.4}. Also, Theorem~\ref{theorem-2.1} implies that the map $g_i$ is continuous for every $i\in\mathscr{I}$. Since the space $\prod_{i\in\mathscr{I}}\mathscr{A}(\lambda_i\times\lambda_i)\times S_i$ is homeomorphic to $\prod_{i\in\mathscr{I}}\mathscr{A}(\lambda_i\times\lambda_i)\times \prod_{i\in\mathscr{I}} S_i$ and Theorem~4.3 from \cite{Frolik1960} we see that the Tychonoff product $\prod_{i\in\mathscr{I}}\mathscr{A}(\lambda_i\times\lambda_i)\times S_i$ is a totally countably compact space. Then by Theorem~\ref{theorem-2.1} and Proposition~2.3.6 of \cite{Engelking1989} the map $g\colon \prod_{i\in\mathscr{I}}\mathscr{A}(\lambda_i\times\lambda_i)\times S_i\to \prod_{i\in\mathscr{I}}B^0_{\lambda_i}(S_i)$ defined by the formula $g= \prod_{i\in\mathscr{I}}g_i$ is continuous. Simple verification imply that a continuous image of a totally countably compact space is a totally countably compact space too. Hence the direct product $\prod\{B^0_{\lambda_i}(S_i)\colon i\in\mathscr{I}\}$ with the Tychonoff topology is a totally countably compact semitopological semigroup.
\end{proof}

Similarly to the proof of Lemma~\ref{lemma-2.5} we can prove the following

\begin{lemma}\label{lemma-2.8}
For any Hausdorff $\omega$-bounded semitopological monoid $(S,\tau)$ with zero and for any cardinal $\lambda\geqslant 1$ the Hausdorff countably compact  topological Brandt $\lambda^0$-extension $\left(B^0_{\lambda}(S),\tau_{B}^S\right)$ of $(S,\tau)$ in the class of semitopological semigroups is an $\omega$-bounded space.
\end{lemma}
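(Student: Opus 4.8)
The plan is to follow the pattern of the proof of Lemma~\ref{lemma-2.5}, the essential difference being that $\omega$-boundedness requires the \emph{whole} closure of a countably infinite set to be compact, not merely that it contain an infinite subset with compact closure (all closures below are taken in $\left(B^0_{\lambda}(S),\tau_{B}^S\right)$). As there, the case $\lambda<\omega$ is trivial: $\mathscr{A}(\lambda\times\lambda)\times S$ is then a finite topological sum of copies of the $\omega$-bounded space $(S,\tau)$, hence $\omega$-bounded, and by Theorem~\ref{theorem-2.1} it maps continuously onto $B^0_\lambda(S)$, while a Hausdorff continuous image of an $\omega$-bounded space is $\omega$-bounded. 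So assume $\lambda\geqslant\omega$, fix a countably infinite $A\subseteq\left(B^0_{\lambda}(S),\tau_{B}^S\right)$, and put $\mathscr{J}=\{(\alpha,\beta)\in\lambda\times\lambda\colon A\cap S_{\alpha,\beta}\ne\emptyset\}$, which is countable.

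If $\mathscr{J}$ is finite, then $A$ lies in the union of finitely many blocks $S_{\alpha,\beta}$. Each such block, with the subspace topology inherited from $\tau_{B}^S$, is by Theorem~\ref{theorem-2.1} naturally homeomorphic to $(S,\tau)$ and is closed in $B^0_\lambda(S)$; hence $\operatorname{cl}(A)=\bigcup_{(\alpha,\beta)\in\mathscr{J}}\operatorname{cl}(A\cap S_{\alpha,\beta})$ is a finite union of compact sets (by $\omega$-boundedness of $(S,\tau)$, or by \cite[Lemma~2]{GutikPavlyk2013a}), so it is compact.

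If $\mathscr{J}$ is infinite, I would show $\operatorname{cl}(A)$ is compact as follows. For each $(\alpha,\beta)\in\mathscr{J}$ the set $C_{\alpha,\beta}:=\operatorname{cl}(A\cap S_{\alpha,\beta})$ is compact: by Theorem~\ref{theorem-2.1} a basic neighbourhood of a point lying outside $S_{\alpha,\beta}$ misses $S_{\alpha,\beta}$, so this closure coincides with the closure taken in the subspace $S_{\alpha,\beta}\cong (S,\tau)$, which is $\omega$-bounded, while $A\cap S_{\alpha,\beta}$ is countable. For the same reason every non-zero $x\in\operatorname{cl}(A)$ lies in $C_{\alpha,\beta}$ for the block $S_{\alpha,\beta}$ containing it, and $0\in\operatorname{cl}(A)$ since, $\mathscr{J}$ being infinite, every basic neighbourhood of $0$ contains all but finitely many blocks and hence meets $A$. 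Now let $\mathscr{U}$ be an open cover of $\operatorname{cl}(A)$ and pick $U\in\mathscr{U}$ with $0\in U$; by Theorem~\ref{theorem-2.1}$(ii)$ there is a finite $F\subseteq\lambda\times\lambda$ with $S_{\gamma,\delta}\subseteq U$ whenever $(\gamma,\delta)\notin F$. Then $\operatorname{cl}(A)\setminus U$ is closed in $B^0_\lambda(S)$ and contained in the finite union $\bigcup_{(\gamma,\delta)\in F}C_{\gamma,\delta}$ of compacta, so it is compact and is covered by finitely many members of $\mathscr{U}$; together with $U$ these give a finite subcover of $\operatorname{cl}(A)$, which completes the argument.

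The step I expect to be the main (if modest) obstacle is the careful verification that each block $S_{\alpha,\beta}$, as a subspace of $\left(B^0_{\lambda}(S),\tau_{B}^S\right)$, is naturally homeomorphic to $(S,\tau)$ and that closures of its subsets do not leak to other blocks or to $0$ in an unexpected way — the zero of $B^0_\lambda(S)$ lying in every block needs some attention here; this is pure bookkeeping with the base $\mathscr{B}_B$ of Theorem~\ref{theorem-2.1}. Conceptually, the one real point is that, unlike in Lemma~\ref{lemma-2.5} where it suffices to produce a single compact set $K$ consisting of $0$ together with one point per block, here one must capture all of $\operatorname{cl}(A)$, and this works precisely because neighbourhoods of $0$ absorb cofinitely many blocks.
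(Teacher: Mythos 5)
Your proposal is correct and follows exactly the route the paper intends, since the paper gives no separate argument for this lemma but only says it is proved ``similarly to the proof of Lemma~\ref{lemma-2.5}''; your adaptation --- replacing the single compact set $K$ of that proof by the observation that $\operatorname{cl}(A)=\{0\}\cup\bigcup_{(\alpha,\beta)\in\mathscr{J}}\operatorname{cl}(A\cap S_{\alpha,\beta})$ with each piece compact, and that any neighbourhood of $0$ absorbs all but finitely many blocks --- is precisely the extra step that $\omega$-boundedness requires. The bookkeeping you flag (each $S_{\alpha,\beta}$ being a closed copy of $S$ from which closures do not leak) is exactly what Theorem~\ref{theorem-2.1} and Lemma~2 of the cited paper of Gutik and Pavlyk provide, so there is no gap.
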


Since by Lemma~4 of \cite{GuldenFleischmanWeston1970} the Tychonoff product of an arbitrary non-empty family of $\omega$-bounded spaces is an $\omega$-bounded space, similarly to the proof of Theorem~\ref{theorem-2.7} we can prove the following

\begin{theorem}\label{theorem-2.9}
Let $\left\{ B^0_{\lambda_i}(S_i) \colon i\in\mathscr{I}\right\}$ be a non-empty family of Hausdorff countably compact topological Brandt $\lambda^0_i$-extension of Hausdorff $\omega$-bounded semitopological monoids. Then the direct product $\prod\left\{ B^0_{\lambda_i}(S_i) \colon i\in\mathscr{I}\right\}$ with the Tychonoff topology is an $\omega$-bounded semitopological semigroup.
\end{theorem}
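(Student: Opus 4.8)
The plan is to give the argument by the quickest available route and then note the longer route that matches the remark preceding the statement. The cleanest argument simply combines results already assembled in this section: by Theorem~\ref{theorem-2.1} the hypothesis forces each $B^0_{\lambda_i}(S_i)$ to carry its unique Hausdorff countably compact extension topology $\tau_{B}^{S_i}$, and since each $S_i$ is $\omega$-bounded (hence countably compact, so that Lemma~\ref{lemma-2.8} applies), Lemma~\ref{lemma-2.8} gives that each $\big(B^0_{\lambda_i}(S_i),\tau_{B}^{S_i}\big)$ is an $\omega$-bounded space. Because $\omega$-boundedness is, by Lemma~4 of \cite{GuldenFleischmanWeston1970}, preserved by arbitrary non-empty Tychonoff products, the product $\prod\{B^0_{\lambda_i}(S_i)\colon i\in\mathscr{I}\}$ is an $\omega$-bounded space. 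Finally each $B^0_{\lambda_i}(S_i)$ is, by hypothesis (and Theorem~\ref{theorem-2.1}), a Hausdorff semitopological semigroup, and both the Hausdorff property and separate continuity of the multiplication are inherited by Tychonoff products, so $\prod\{B^0_{\lambda_i}(S_i)\colon i\in\mathscr{I}\}$ is a Hausdorff $\omega$-bounded semitopological semigroup, as required.

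Alternatively --- and this is the route suggested by the preceding remark --- one can argue exactly as in the proof of Theorem~\ref{theorem-2.7}. For each $i\in\mathscr{I}$ take the space $\mathscr{A}(\lambda_i\times\lambda_i)$ (the one-point Alexandroff compactification of the discrete space of cardinality $\lambda_i$ when $\lambda_i\geqslant\omega$, using $\lambda_i=\lambda_i\cdot\lambda_i$; the discrete space of cardinality $\lambda_i^2+1$ when $\lambda_i<\omega$) with distinguished point $a_i$, and the map $g_i\colon\mathscr{A}(\lambda_i\times\lambda_i)\times S_i\to B^0_{\lambda_i}(S_i)$ defined by the formulae~(\ref{eq-1}), whose continuity is guaranteed by Theorem~\ref{theorem-2.1}. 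Each $g_i$ is surjective, since every non-zero element of $B^0_{\lambda_i}(S_i)$ has the form $(\alpha_i,s_i,\beta_i)$ with $s_i\ne 0_{S_i}$. Every $\mathscr{A}(\lambda_i\times\lambda_i)$ is compact, hence $\omega$-bounded (the closure of a subset of a compact Hausdorff space is compact), and each $S_i$ is $\omega$-bounded, so by Lemma~4 of \cite{GuldenFleischmanWeston1970} the Tychonoff product $\prod_{i\in\mathscr{I}}\big(\mathscr{A}(\lambda_i\times\lambda_i)\times S_i\big)$, which is homeomorphic to $\prod_{i\in\mathscr{I}}\mathscr{A}(\lambda_i\times\lambda_i)\times\prod_{i\in\mathscr{I}}S_i$, is $\omega$-bounded. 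The map $g=\prod_{i\in\mathscr{I}}g_i$ is a continuous surjection onto $\prod_{i\in\mathscr{I}}B^0_{\lambda_i}(S_i)$ by Proposition~2.3.6 of \cite{Engelking1989} together with Theorem~\ref{theorem-2.1}; and a continuous image in a Hausdorff space of an $\omega$-bounded space is $\omega$-bounded (for a countably infinite subset $A$ of the image, lift it to a countably infinite $B$ in the domain, note that $\operatorname{cl}(B)$ is compact, and observe that its image is a compact, hence closed, set containing $\operatorname{cl}(A)$). Hence $\prod_{i\in\mathscr{I}}B^0_{\lambda_i}(S_i)$ is $\omega$-bounded, and the semitopological-semigroup part is settled as in the first route.

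I do not expect any genuine obstacle here: every ingredient is already available in the excerpt, and in fact the productivity of $\omega$-boundedness makes the first route essentially immediate. The only points needing a moment's care are the elementary fact, used in the second route, that continuous images in Hausdorff spaces of $\omega$-bounded spaces are again $\omega$-bounded (the Hausdorffness being precisely what lets one conclude that the compact image of $\operatorname{cl}(B)$ is closed, and therefore contains $\operatorname{cl}(A)$), and the bookkeeping check that the construction of $\mathscr{A}(\lambda_i\times\lambda_i)$ and $g_i$ from the proof of Theorem~\ref{theorem-2.4} carries over verbatim. I would present the first route as the main argument and mention the second only as the analogue of the proof of Theorem~\ref{theorem-2.7}.
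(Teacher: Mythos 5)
Your proposal is correct, and your second route is precisely the paper's proof: the paper proves Theorem~\ref{theorem-2.9} ``similarly to the proof of Theorem~\ref{theorem-2.7}'', i.e.\ by forming the continuous surjection $g=\prod_{i\in\mathscr{I}}g_i$ from $\prod_{i\in\mathscr{I}}\mathscr{A}(\lambda_i\times\lambda_i)\times S_i$ onto the product of the extensions, invoking Lemma~4 of \cite{GuldenFleischmanWeston1970} for the $\omega$-boundedness of the domain, and using that a continuous image (in a Hausdorff space) of an $\omega$-bounded space is $\omega$-bounded. Your preferred first route is genuinely different and in fact shorter: since Theorem~\ref{theorem-2.1} makes the Hausdorff countably compact extension unique, Lemma~\ref{lemma-2.8} already identifies each factor $B^0_{\lambda_i}(S_i)$ as an $\omega$-bounded space, after which Lemma~4 of \cite{GuldenFleischmanWeston1970} applied directly to these factors finishes the topological part; no auxiliary compactifications or quotient maps are needed. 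What the paper's route buys is uniformity with Theorems~\ref{theorem-2.4}, \ref{theorem-2.7}, \ref{theorem-2.12} and \ref{theorem-2.15}, where the analogue of your shortcut is unavailable because countable compactness, pseudocompactness and countable pracompactness are not productive (which is why those statements carry a hypothesis on $\prod S_i$); for $\omega$-boundedness the shortcut is legitimate and arguably preferable. Both of your arguments are sound, including the small lemma that Hausdorffness of the codomain is what makes the compact image of $\operatorname{cl}(B)$ closed and hence a superset of $\operatorname{cl}(A)$.
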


Theorems~\ref{theorem-2.1} and~\ref{theorem-2.9} imply the following:

\begin{corollary}\label{corollary-2.10}
Let $\left\{ B^0_{\lambda_i}(S_i) \colon i\in\mathscr{I}\right\}$ be a non-empty family of Hausdorff totally countably compact topological Brandt $\lambda^0_i$-extension of Hausdorff $\omega$-bounded semitopological monoids. Then the direct product $\prod\left\{ B^0_{\lambda_i}(S_i) \colon i\in\mathscr{I}\right\}$ with the Tychonoff topology is an $\omega$-bounded semitopological semigroup.
\end{corollary}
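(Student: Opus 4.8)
The plan is to observe that the hypotheses of this corollary are, after one elementary remark, already covered by Theorem~\ref{theorem-2.9}. First I would recall the routine implication ``totally countably compact $\Rightarrow$ countably compact'': if $A$ is a countably infinite subset of a totally countably compact space $X$, choose an infinite $B\subseteq A$ with $\operatorname{cl}_X(B)$ compact; since an infinite subset of a Hausdorff compact space always has an accumulation point, $B$, and hence $A$, has an accumulation point in $X$. Consequently every Hausdorff totally countably compact topological Brandt $\lambda^0_i$-extension occurring in the family is in particular a Hausdorff \emph{countably} compact topological Brandt $\lambda^0_i$-extension of the corresponding $\omega$-bounded semitopological monoid $S_i$.

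Next I would bring in the uniqueness part of Theorem~\ref{theorem-2.1}: since each $S_i$ is $\omega$-bounded and therefore countably compact, there is exactly one Hausdorff countably compact topological Brandt $\lambda^0_i$-extension of $S_i$ in the class of semitopological semigroups, namely $\left(B^0_{\lambda_i}(S_i),\tau_{B}^{S_i}\right)$. Hence the given extension must coincide with $\left(B^0_{\lambda_i}(S_i),\tau_{B}^{S_i}\right)$, and by Lemma~\ref{lemma-2.8} this space is $\omega$-bounded. (Equivalently, one may simply invoke Theorem~\ref{theorem-2.9} at this point, since the family now manifestly satisfies its hypotheses.)

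Finally I would apply Theorem~\ref{theorem-2.9}, or, unwinding it, use Lemma~4 of \cite{GuldenFleischmanWeston1970} to conclude that the Tychonoff product of the $\omega$-bounded spaces $B^0_{\lambda_i}(S_i)$ is $\omega$-bounded, together with the standard fact that a Tychonoff product of semitopological semigroups is a semitopological semigroup, to obtain that $\prod\left\{B^0_{\lambda_i}(S_i)\colon i\in\mathscr{I}\right\}$ endowed with the Tychonoff topology is an $\omega$-bounded semitopological semigroup. I do not anticipate any real obstacle here; the only point worth making explicit is that, for an $\omega$-bounded base monoid, the notions ``countably compact topological Brandt $\lambda^0$-extension'', ``totally countably compact topological Brandt $\lambda^0$-extension'' and ``the canonical extension $\left(B^0_{\lambda}(S),\tau_{B}^{S}\right)$'' all describe the same single object, so the corollary is a formal consequence of results already established.
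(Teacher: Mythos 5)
Your proposal is correct and follows the same route as the paper: the corollary is stated there as an immediate consequence of Theorems~\ref{theorem-2.1} and~\ref{theorem-2.9}, i.e.\ one notes that a totally countably compact space is countably compact, so each extension is the (unique) Hausdorff countably compact topological Brandt $\lambda^0_i$-extension of an $\omega$-bounded monoid and Theorem~\ref{theorem-2.9} applies. Your explicit verification of the implication and of the uniqueness step simply spells out what the paper leaves implicit.
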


Later we shall use the following theorem from \cite{GutikPavlyk2013a}:

\begin{theorem}[{\cite[Theorem~15]{GutikPavlyk2013a}}]\label{theorem-2.11}
For any semiregular pseudocompact semitopological monoid $(S,\tau)$ with zero and for any cardinal $\lambda\geqslant 1$ there exists a unique semiregular pseudocompact  topological Brandt $\lambda^0$-extension $\left(B^0_{\lambda}(S),\tau_{B}^S\right)$ of $(S,\tau)$ in the class of semitopological semigroups, and the topology $\tau_{B}^S$ is generated by the base $\mathscr{B}_B=\bigcup\left\{\mathscr{B}_B(t)\colon t\in B^0_{\lambda}(S)\right\}$, where:
\begin{itemize}
    \item[$(i)$] $\mathscr{B}_B(t)=\big\{(U(s)\setminus\{ 0_S\})_{\alpha,\beta} \colon U(s)\in \mathscr{B}_S(s)\big\}$, where $t=(\alpha,s,\beta)$ is a non-zero element of    $B^0_{\lambda}(S)$, $\alpha,\beta\in\lambda$;

    \item[$(ii)$] $\mathscr{B}_B(0)=\Big\{U_{A}(0)= \bigcup_{(\alpha,\beta)\in(\lambda\times\lambda) \setminus A}S_{\alpha,\beta}\cup \bigcup_{(\gamma,\delta)\in A} (U(0_S))_{\gamma,\delta} \colon A \hbox{~is a finite subset of~} \lambda\times\lambda$ and  $U(0_S)\in\mathscr{B}_S(0_S)\Big\}$, where $0$ is the zero of $B^0_{\lambda}(S)$,
\end{itemize}
and $\mathscr{B}_S(s)$ is a base of the topology $\tau$ at the point $s\in S$.
\end{theorem}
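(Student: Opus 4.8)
The plan is to prove existence and uniqueness separately, the existence part splitting into three claims: that the prescribed base works, that the resulting space is semiregular, and that it is pseudocompact. For the first claim I would note that the formulae defining $\mathscr{B}_B$ are exactly those appearing in Theorem~\ref{theorem-2.1}, and that the argument given there that $\mathscr{B}_B$ is a base of a Hausdorff topology $\tau_B^S$ on $B^0_{\lambda}(S)$, that $\big(B^0_{\lambda}(S),\tau_B^S\big)$ is a semitopological semigroup, and that each $\big(S_{\alpha,\alpha},\tau_B^S|_{S_{\alpha,\alpha}}\big)$ is naturally homeomorphic to $(S,\tau)$, uses only separate continuity of the multiplication on $(S,\tau)$ and the neighbourhood axioms; the countable compactness assumed in Theorem~\ref{theorem-2.1} plays no role in this part. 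From the description of $\mathscr{B}_B$ one also reads off that \emph{every} $\big(S_{\mu,\nu},\tau_B^S|_{S_{\mu,\nu}}\big)$ is naturally homeomorphic to $(S,\tau)$ and that each $S_{\mu,\nu}\setminus\{0\}$ is open. Thus $\big(B^0_{\lambda}(S),\tau_B^S\big)$ is a topological Brandt $\lambda^0$-extension of $(S,\tau)$ in the class of semitopological semigroups, and it remains to establish semiregularity, pseudocompactness, and uniqueness.

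\emph{Semiregularity.} Let $\mathscr{R}$ be a base of $(S,\tau)$ consisting of regular open sets and set $\mathscr{R}_B=\big\{(R\setminus\{0_S\})_{\alpha,\beta}\colon R\in\mathscr{R},\ \alpha,\beta\in\lambda\big\}\cup\big\{U_{A}(0)\colon A\subseteq\lambda\times\lambda\text{ finite},\ U(0_S)\in\mathscr{R}\big\}$. As $\mathscr{R}$ is a base of $(S,\tau)$, the family $\mathscr{R}_B$ still generates $\tau_B^S$, so it suffices to show each of its members is regular open. I would compute directly from $\mathscr{B}_B$ that
\[
\operatorname{cl}_{\tau_B^S}\!\big((R\setminus\{0_S\})_{\alpha,\beta}\big)=(\operatorname{cl}_S R)_{\alpha,\beta}
\qquad\text{and}\qquad
\operatorname{cl}_{\tau_B^S}\!\big(U_{A}(0)\big)=\bigcup_{(\mu,\nu)\in(\lambda\times\lambda)\setminus A}S_{\mu,\nu}\ \cup\ \bigcup_{(\gamma,\delta)\in A}(\operatorname{cl}_S U(0_S))_{\gamma,\delta},
\]
and then take interiors, using $\operatorname{int}_S\operatorname{cl}_S R=R$ and $\operatorname{int}_S\operatorname{cl}_S U(0_S)=U(0_S)$ to recover $(R\setminus\{0_S\})_{\alpha,\beta}$ and $U_{A}(0)$. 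The only delicate point is to check that the zero of $B^0_{\lambda}(S)$ is not an interior point of $\operatorname{cl}_{\tau_B^S}\!\big((R\setminus\{0_S\})_{\alpha,\beta}\big)$; this holds because any basic neighbourhood of $0$ contains whole copies $S_{\mu,\nu}$, whereas that closure meets only the single copy indexed by $(\alpha,\beta)$. The degenerate cases where $0_S$ is isolated in $S$ or $\lambda$ is finite are dealt with by hand.

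\emph{Pseudocompactness.} Let $\{V_i\colon i\in\mathscr{J}\}$ be a locally finite family of nonempty open subsets of $\big(B^0_{\lambda}(S),\tau_B^S\big)$; I must show $\mathscr{J}$ is finite. By local finiteness at $0$ there is a basic neighbourhood $U_{A}(0)$ of $0$ meeting only finitely many $V_i$, and for each remaining $i$ one has $V_i\subseteq\bigcup_{(\mu,\nu)\in A}\big(S_{\mu,\nu}\setminus\{0\}\big)$ with $V_i\cap (U(0_S))_{\mu,\nu}=\varnothing$ for every $(\mu,\nu)\in A$. Fixing $(\mu,\nu)\in A$, the nonempty sets among $\{V_i\cap S_{\mu,\nu}\}$ form a locally finite family of nonempty open subsets of $\big(S_{\mu,\nu},\tau_B^S|_{S_{\mu,\nu}}\big)\cong(S,\tau)$ — locally finite at the nonzero points because $S_{\mu,\nu}\setminus\{0\}$ is an open subspace, and at $0$ because $(U(0_S))_{\mu,\nu}$ is a neighbourhood of $0$ in $S_{\mu,\nu}$ disjoint from all of them. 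Pseudocompactness of $(S,\tau)$ now bounds, for each of the finitely many $(\mu,\nu)\in A$, the number of $V_i$ meeting $S_{\mu,\nu}$, and since every remaining $V_i$ meets at least one such copy, $\mathscr{J}$ is finite. (One could instead note that $\big(B^0_{\lambda}(S),\tau_B^S\big)$ is a continuous image of $\mathscr{A}(\lambda\times\lambda)\times S$ under the map $g$ from Lemma~\ref{lemma-2.2}, which is a product of a compact space and a pseudocompact space and hence pseudocompact.)

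\emph{Uniqueness, and the main obstacle.} Let $\big(B^0_{\lambda}(S),\tau'\big)$ be any semiregular pseudocompact topological Brandt $\lambda^0$-extension of $(S,\tau)$ in the class of semitopological semigroups. Writing $1_S$ for the identity of $S$, separate continuity makes each map $x\mapsto(\gamma,1_S,\alpha)\cdot x\cdot(\beta,1_S,\delta)$ continuous, and suitable compositions of such maps are mutually inverse continuous bijections between the subspaces $S_{\alpha,\beta}$; hence all $\big(S_{\alpha,\beta},\tau'|_{S_{\alpha,\beta}}\big)$ are homeomorphic and, by the defining property, naturally homeomorphic to $(S,\tau)$. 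Since $x\mapsto(\alpha,1_S,\alpha)\cdot x\cdot(\beta,1_S,\beta)$ is a continuous retraction of $B^0_{\lambda}(S)$ onto $S_{\alpha,\beta}$ and $S_{\alpha,\beta}\setminus\{0\}$ is open in the subspace $S_{\alpha,\beta}$, the set $S_{\alpha,\beta}\setminus\{0\}$ is $\tau'$-open, so $\tau'$ and $\tau_B^S$ agree on $B^0_{\lambda}(S)\setminus\{0\}$; it remains to see that $0$ has the same neighbourhood filter in both. For this I would first show that every regular open $\tau'$-neighbourhood $V$ of $0$ contains some $U_{A}(0)$: the sets $\big(S_{\mu,\nu}\setminus\{0\}\big)\setminus\operatorname{cl}_{\tau'}V$, taken over the $(\mu,\nu)$ with $S_{\mu,\nu}\not\subseteq\operatorname{cl}_{\tau'}V$, are nonempty and $\tau'$-open and form a locally finite family (they miss $V$, and away from $0$ each lies in one copy), so by pseudocompactness only finitely many such $(\mu,\nu)$ occur; then $V=\operatorname{int}_{\tau'}\operatorname{cl}_{\tau'}V$ contains all but finitely many whole copies $S_{\mu,\nu}$, and intersecting $V$ with the remaining copies (each carrying the topology of $S$) produces the required $U_{A}(0)\subseteq V$. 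Since $\tau'$ is semiregular, this shows every $\tau'$-neighbourhood of $0$ is a $\tau_B^S$-neighbourhood of $0$. Conversely, for a basic $U_{A}(0)$ and any $\tau'$-open $W\ni 0$, the set $W\cap U_{A}(0)$ is $\tau'$-open off $0$, and $0\notin\operatorname{cl}_{\tau'}\big(B^0_{\lambda}(S)\setminus(W\cap U_{A}(0))\big)$ since $B^0_{\lambda}(S)\setminus U_{A}(0)$ is a finite union of the $\tau'$-closed sets $S_{\gamma,\delta}\setminus(U(0_S))_{\gamma,\delta}$, none containing $0$; hence $W\cap U_{A}(0)$ is a $\tau'$-neighbourhood of $0$ inside $U_{A}(0)$, and therefore $\tau'=\tau_B^S$. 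The step I expect to be the main obstacle is precisely the one where pseudocompactness and semiregularity are combined to force every neighbourhood of $0$ to contain all but finitely many of the copies $S_{\mu,\nu}$: both hypotheses are genuinely needed there (for an arbitrary Hausdorff topological Brandt $\lambda^0$-extension the topology at $0$ is highly non-unique), and the same careful bookkeeping at the zero of $B^0_{\lambda}(S)$ is what makes the semiregularity computation work.
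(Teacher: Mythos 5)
This theorem is not proved in the paper at all: it is quoted verbatim from \cite[Theorem~15]{GutikPavlyk2013a}, so there is no in-paper proof to compare your argument against. Judged on its own, your proposal is correct and follows what is essentially the standard route for such results, and it aligns well with the machinery this paper does develop: your existence step (base axioms, Hausdorffness, separate continuity of the multiplication, pseudocompactness of the extension) is exactly the content of Example~\ref{example-2.19} and Proposition~\ref{proposition-2.20}, and the key step of your uniqueness argument --- that pseudocompactness forces all but finitely many copies $S^*_{\mu,\nu}$ to lie in $\operatorname{cl}_{\tau'}(V)$ for any neighbourhood $V$ of zero, via a locally finite family of the sets $S^*_{\mu,\nu}\setminus\operatorname{cl}_{\tau'}(V)$ --- is precisely Lemma~\ref{lemma-2.18} of the paper, proved there by the same argument. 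Your use of semiregularity to upgrade ``$\operatorname{cl}_{\tau'}(V)$ contains almost all copies'' to ``$V$ itself contains almost all $S^*_{\mu,\nu}$'' via $V=\operatorname{int}_{\tau'}\operatorname{cl}_{\tau'}(V)$, and the converse inclusion via the observation that $B^0_{\lambda}(S)\setminus U_A(0)$ is a finite union of $\tau'$-closed sets missing $0$, are both sound; the translation argument showing each $S^*_{\alpha,\beta}$ is $\tau'$-open and carries the topology of $S^*$ is also correct. The only cosmetic blemish is your appeal to ``the argument given in Theorem~\ref{theorem-2.1}'', which likewise has no proof in this paper; the honest reference for that step is Proposition~\ref{proposition-2.20}. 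The degenerate cases you defer ($\lambda$ finite, $0_S$ isolated) are genuinely routine, so I see no gap.
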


\begin{theorem}\label{theorem-2.12}
Let $\left\{ B^0_{\lambda_i}(S_i) \colon i\in\mathscr{I}\right\}$ be a non-empty family of semiregular pseudocompact topological Brandt $\lambda^0_i$-extension of semiregular pseudocompact semitopological monoids such that the Tychonoff product $\prod\left\{ S_i \colon i\in\mathscr{I}\right\}$ is a pseudocompact space. Then the direct product $\prod\left\{ B^0_{\lambda_i}(S_i) \colon i\in\mathscr{I}\right\}$ with the Tychonoff topology is a semiregular pseudocompact semitopological semigroup.
\end{theorem}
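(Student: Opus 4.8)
The plan is to establish the three assertions of the conclusion separately --- that the direct product is a semitopological semigroup, that it is semiregular, and that it is pseudocompact --- the last one by the method used in the proofs of Theorems~\ref{theorem-2.4} and~\ref{theorem-2.7}. First I would observe that, by the uniqueness assertion of Theorem~\ref{theorem-2.11} (which applies because every $S_i$ is a semiregular pseudocompact semitopological monoid with zero), the topology of each $B^0_{\lambda_i}(S_i)$ coincides with the topology $\tau_B^{S_i}$ generated by the base $\mathscr{B}_B$ described there. Then, for each $i\in\mathscr{I}$, I would take the space $\mathscr{A}(\lambda_i\times\lambda_i)$ and the map $g_i\colon\mathscr{A}(\lambda_i\times\lambda_i)\times S_i\to B^0_{\lambda_i}(S_i)$ introduced in the proof of Theorem~\ref{theorem-2.4} (so that $\mathscr{A}(\lambda_i\times\lambda_i)$ is the one-point Alexandroff compactification of a discrete space of cardinality $\lambda_i^2$ if $\lambda_i\geqslant\omega$, and a discrete space of cardinality $\lambda_i^2+1$ if $\lambda_i<\omega$), and note that the explicit form of $\mathscr{B}_B$ in Theorem~\ref{theorem-2.11} shows, exactly as in the proof of Theorem~\ref{theorem-2.4}, that each $g_i$ is a continuous surjection.

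For the pseudocompactness I would put $X=\prod_{i\in\mathscr{I}}(\mathscr{A}(\lambda_i\times\lambda_i)\times S_i)$. By Proposition~2.3.6 of~\cite{Engelking1989} the map $g=\prod_{i\in\mathscr{I}}g_i\colon X\to\prod_{i\in\mathscr{I}}B^0_{\lambda_i}(S_i)$ is a continuous surjection. Since $X$ is homeomorphic to $\left(\prod_{i\in\mathscr{I}}\mathscr{A}(\lambda_i\times\lambda_i)\right)\times\left(\prod_{i\in\mathscr{I}}S_i\right)$, whose first factor is compact (a Tychonoff product of compact spaces) and whose second factor is pseudocompact by hypothesis, and since the product of a compact space with a pseudocompact space is pseudocompact, the space $X$ is pseudocompact. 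Because a continuous image of a pseudocompact space is pseudocompact --- an infinite locally finite family of non-empty open subsets of the image would pull back under $g$ to an infinite locally finite family of non-empty open subsets of $X$ --- and because $\prod_{i\in\mathscr{I}}B^0_{\lambda_i}(S_i)$ is Hausdorff, being a Tychonoff product of Hausdorff spaces, it follows that $\prod_{i\in\mathscr{I}}B^0_{\lambda_i}(S_i)$ is a pseudocompact space.

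The remaining two properties I would verify directly. Each $B^0_{\lambda_i}(S_i)$ is a semitopological semigroup, and a Tychonoff product of semitopological semigroups with the coordinatewise operation is again a semitopological semigroup, because every one-sided translation on the product is the product of the corresponding one-sided translations on the factors and a product of continuous maps is continuous. Each $B^0_{\lambda_i}(S_i)$ is semiregular, and a Tychonoff product of semiregular spaces is semiregular: if $\prod_{i\in\mathscr{I}}U_i$ is a basic open box with $U_i=B^0_{\lambda_i}(S_i)$ for all but finitely many $i$ and with each $U_i$ regular open, then $\operatorname{int}\left(\operatorname{cl}\left(\prod_{i\in\mathscr{I}}U_i\right)\right)=\prod_{i\in\mathscr{I}}\operatorname{int}(\operatorname{cl}(U_i))=\prod_{i\in\mathscr{I}}U_i$, so the boxes of this form constitute a base of regular open sets. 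Combining the three parts gives the statement.

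I expect the main obstacle to be the pseudocompactness step, and concretely the auxiliary fact that the product $K\times Y$ of a compact space $K$ and a pseudocompact space $Y$ is pseudocompact. If this cannot be quoted in the form required, I would prove it directly from the characterization that a Hausdorff space is pseudocompact if and only if every locally finite family of non-empty open subsets is finite: given an infinite family of non-empty open subsets of $K\times Y$, I would pick distinct members $W_n$, $n\in\mathbb{N}$, together with a non-empty open box $O_n\times V_n\subseteq W_n$ with $O_n$ open in $K$ and $V_n$ open in $Y$; pseudocompactness of $Y$ provides a point $y_0\in Y$ for which $N_U:=\{n\in\mathbb{N}\colon V_n\cap U\neq\varnothing\}$ is infinite for every neighbourhood $U$ of $y_0$; then $\left\{\operatorname{cl}_K\left(\bigcup\{O_n\colon n\in N_U,\ n\geqslant m\}\right)\colon U\ni y_0,\ m\in\mathbb{N}\right\}$ is a filter base of non-empty closed subsets of $K$, so by compactness there is a point $k_0$ lying in all of its members; and one checks that every neighbourhood of $(k_0,y_0)$ meets $W_n$ for infinitely many $n$, so the family $\{W_n\}$, and hence the original family, is not locally finite. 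Thus $K\times Y$ is pseudocompact.
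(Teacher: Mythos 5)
Your argument is correct and follows essentially the same route as the paper's proof: the continuous surjection $g=\prod_{i\in\mathscr{I}}g_i$ from $\bigl(\prod_{i\in\mathscr{I}}\mathscr{A}(\lambda_i\times\lambda_i)\bigr)\times\prod_{i\in\mathscr{I}}S_i$, the factorization into a compact factor times the pseudocompact product $\prod_{i\in\mathscr{I}}S_i$, and the preservation of semiregularity under Tychonoff products via regular open boxes. The only difference is that you supply direct proofs of the two auxiliary facts (that the product of a compact and a pseudocompact space is pseudocompact, and that a box of regular open sets is regular open) which the paper simply cites from \cite{GutikRavsky2013??} and \cite{Ravsky-arxiv1003.5343v5}, and both of your arguments for these are sound.
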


\begin{proof}
Since by Lemma~20 from \cite{Ravsky-arxiv1003.5343v5} the Tychonoff product of regular open sets is regular open we obtain that Tychonoff product of semiregular topological spaces is semiregular.

Let for every $i\in\mathscr{I}$, $\mathscr{A}(\lambda_i\times\lambda_i)$ be a space and $g_i\colon\mathscr{A}(\lambda_i\times\lambda_i)\times S_i\to B^0_{\lambda_i}(S_i)$ be the map defined in the proof of Theorem~\ref{theorem-2.4}. Theorem~\ref{theorem-2.11} implies that the map $g_i$ is continuous for every $i\in\mathscr{I}$. Since the space $\prod_{i\in\mathscr{I}}\mathscr{A}(\lambda_i\times\lambda_i)\times S_i$ is homeomorphic to $\prod_{i\in\mathscr{I}}\mathscr{A}(\lambda_i\times\lambda_i)\times \prod_{i\in\mathscr{I}} S_i$, Theorem~3.2.4 from \cite{Engelking1989} and Corollary~3.3~\cite{GutikRavsky2013??} imply that the Tychonoff product $\prod_{i\in\mathscr{I}}\mathscr{A}(\lambda_i\times\lambda_i)\times S_i$ is a pseudocompact space. Then by Theorem~\ref{theorem-2.11} and Proposition~2.3.6 of \cite{Engelking1989} the map $g\colon \prod_{i\in\mathscr{I}}\mathscr{A}(\lambda_i\times\lambda_i)\times S_i\to \prod_{i\in\mathscr{I}}B^0_{\lambda_i}(S_i)$ defined by the formula $g= \prod_{i\in\mathscr{I}}g_i$ is continuous, and hence the direct product $\prod\left\{B^0_{\lambda_i}(S_i)\colon i\in\mathscr{I}\right\}$ with the Tychonoff topology is a semiregular pseudocompact semitopological semigroup.
\end{proof}

\begin{proposition}\label{proposition-2.13}
Let $X,Y$ be Hausdorff countably pracompact spaces. Then the product $X\times Y$ is countably pracompact provided $Y$ is a $k$-space or sequentially compact.
\end{proposition}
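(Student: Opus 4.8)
The plan is to recall that countable pracompactness means there is a dense set at which the space is countably compact, and then to build, from the witnessing dense sets $A \subseteq X$ and $B \subseteq Y$, a dense subset of $X \times Y$ witnessing countable pracompactness of the product. The obvious candidate is $A \times B$, which is dense in $X \times Y$; so the entire content is to show that $X \times Y$ is countably compact at $A \times B$, i.e. that every infinite subset $D \subseteq A \times B$ has an accumulation point in $X \times Y$. As usual for such product arguments one reduces to a countably infinite $D$, writes $D = \{(x_n, y_n) \colon n \in \mathbb{N}\}$ with the points distinct, and tries to extract an accumulation point by first finding an accumulation point of $\{x_n\}$ in $X$ (possible since $X$ is countably compact at $A$ and $\{x_n\} \subseteq A$), passing to the indices clustering there, and then finding an accumulation point of the corresponding $\{y_n\}$ in $Y$.

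First I would handle the case where one of the coordinate sequences, say $\{x_n \colon n \in \mathbb{N}\}$, has some value repeated infinitely often: then along that constant subsequence $D$ reduces to $\{x_0\} \times B'$ for an infinite $B' \subseteq B$, which has an accumulation point in $\{x_0\} \times Y$ because $Y$ is countably compact at $B$; the same applies symmetrically to $\{y_n\}$. So one may assume both $\{x_n \colon n \in \mathbb{N}\}$ and $\{y_n \colon n \in \mathbb{N}\}$ are infinite. Since $\{x_n\} \subseteq A$ is infinite and $X$ is countably compact at $A$, it has an accumulation point $x \in X$; let $M = \{n \in \mathbb{N} \colon x_n \text{ lies in a fixed countable neighborhood trace}\}$ — more precisely, I extract an infinite $M \subseteq \mathbb{N}$ such that $x$ is an accumulation point of $\{x_n \colon n \in M\}$ and, in the sequentially compact case, such that $x_n \to x$ along $M$.

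The main obstacle is producing an accumulation point of the partner sequence $\{y_n \colon n \in M\}$ that cooperates with $x$, and this is exactly where the extra hypothesis on $Y$ is used. If $Y$ is sequentially compact, then $\{y_n \colon n \in M\}$ has a convergent subsequence $y_{n_k} \to y$ along some infinite $M' \subseteq M$; since along $M'$ we also have $x_{n_k} \to x$ (having arranged this above when $X$ is sequentially compact — note countable pracompactness plus sequential compactness gives what we need, and in fact when $Y$ is sequentially compact we can instead simply run the symmetric argument choosing a convergent subsequence in $Y$ first and then using countable compactness of $X$ at $A$ to get an accumulation point of the $x$-part along it), the point $(x, y)$ is an accumulation point of $D$ in $X \times Y$. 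If instead $Y$ is a $k$-space: a countably pracompact $k$-space — here I would invoke that a countably compact-at-a-dense-set $k$-space has the property that the closure of the countable set $\{y_n \colon n \in M\}$, being contained in a suitable countably compact piece, behaves well; concretely, one shows $\operatorname{cl}_Y\{y_n \colon n \in M\}$ is countably compact, hence in a $k$-space one can find within it an accumulation point $y$ of $\{y_n \colon n \in M\}$ together with an infinite $M' \subseteq M$ witnessing it, and then verify that every basic neighborhood $U \times V$ of $(x,y)$ meets $D \setminus \{(x_n,y_n)\}$ infinitely often because infinitely many $n \in M'$ have $x_n \in U$ and for those $n$ infinitely many have $y_n \in V$. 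Either way $(x,y)$ is the desired accumulation point, so $X \times Y$ is countably compact at the dense set $A \times B$, which is what was to be shown.
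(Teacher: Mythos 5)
Your sequentially compact case is essentially correct once you settle on the symmetric order you mention at the end (first extract a convergent subsequence $y_{n_k}\to y$ in $Y$, then take an accumulation point $x$ of the infinite set $\{x_{n_k}\}\subseteq A$; since all but finitely many $y_{n_k}$ lie in any neighbourhood $V$ of $y$, the two index conditions automatically cooperate). That is the argument of Engelking's Theorem~3.10.36, to which the paper simply refers. Your first attempt, arranging $x_n\to x$ along $M$, is not available because $X$ is only countably pracompact, but you notice this yourself.

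The $k$-space case, however, has a genuine gap, in two places. First, the claim that $\operatorname{cl}_Y\{y_n\colon n\in M\}$ is countably compact does not follow from countable pracompactness: countable compactness of $Y$ at a dense set $B$ controls only infinite subsets of $B$, not infinite subsets of closures of subsets of $B$. A Mr\'owka space $\Psi=\omega\cup\mathcal{A}$ over a maximal almost disjoint family is first countable (hence a $k$-space) and countably compact at the dense set $\omega$, yet $\operatorname{cl}_\Psi(\omega)=\Psi$ is not countably compact. Second, and more seriously, your final verification merges ``infinitely many $n$ with $x_n\in U$'' and ``infinitely many $n$ with $y_n\in V$'' into ``some $n$ with $(x_n,y_n)\in U\times V$''; these two index sets may be disjoint, and this is precisely why a product of two countably compact spaces can fail to be countably compact. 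Since $y$ was chosen before $U$, there is no reason for $y$ to be an accumulation point of $\{y_n\colon x_n\in U\}$. The paper avoids this by arguing by contradiction via Engelking's Lemma~3.10.12: if the set $\{(x_n,y_n)\}$ had no accumulation point in $X\times Y$ with $Y$ a $k$-space, some infinite subfamily would project onto a set without accumulation points in one factor; countable compactness at the dense sets then forces that projection to be finite, which throws the argument back into the constant-coordinate case you already handled correctly. Some such essential use of the $k$-space hypothesis is needed, and your proposal does not supply it.
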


\begin{proof} Let the space $X$ be countably compact at its dense subset $D_X$ and
the space $Y$ be countably compact at its dense subset $D_Y$. The set $D_X\times D_Y$ is a dense subset of the space $X\times Y$. We claim that the space $X\times Y$ is
countably compact at the set $D_X\times D_Y$. Indeed, let $A=\{(x_s,y_s)\colon s\in S\}$ be an infinite subset of the set $D_X\times D_Y$ such that $(x_s,y_s)\ne (x_{s'},y_{s'})$ provided $s\ne s'$. Assume that the set $A$ has no accumulation point in the space $X$.

If $Y$ is a $k$-space then Lemma~3.10.12 from \cite{Engelking1989} implies that
there exists an infinite subset $S_0\subset S$ such that either the set $\{x_s\colon s\in S_0\}$ or the set $\{y_s\colon s\in S_0\}$ has no accumulation point. Then this set is finite. Without loss of generality, we can assume that there are a point $x\in X$ and an infinite subset $S_1$ of the set $S_0$ such that $x_s=x$ for each index $s\in S_1$. Since the space $Y$ is countably compact at the set $D_Y$, there exists an accumulation point $y\in Y$ of the set $\{y_s\colon s\in S_1\}$. Then the point $(x,y)$ is an accumulation point of the set $\{(x_s,y_s)\colon s\in S_1\}$, a contradiction.

If $Y$ is a sequentially compact space then the proof of the claim is similar to the proof of Theorem 3.10.36 from \cite{Engelking1989}.
\end{proof}

Proposition~\ref{proposition-2.13} implies the following corollary:

\begin{corollary}\label{corollary-2.14}
The product $X\times Y$ of Hausdorff countably pracompact space $X$ and compactum $Y$ is countably pracompact.
\end{corollary}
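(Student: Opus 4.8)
The plan is to obtain Corollary~\ref{corollary-2.14} as an immediate specialisation of Proposition~\ref{proposition-2.13}. That proposition asserts that the product of two Hausdorff countably pracompact spaces $X$ and $Y$ is countably pracompact whenever $Y$ is a $k$-space or sequentially compact. Hence it suffices to check that a compactum $Y$ meets all the hypotheses imposed on the second factor there, namely that it is Hausdorff, countably pracompact, and a $k$-space. Since $X$ is already assumed to be Hausdorff countably pracompact, the corollary then follows at once.

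First I would verify that every compactum $Y$ is countably pracompact. A compact space is countably compact, and, as recorded in the preliminaries (see \cite{Arkhangelskii1992}), every countably compact space is countably pracompact; concretely, $Y$ is countably compact at its dense subset $D_Y=Y$ itself, because every infinite subset of a compact space has an accumulation point. Next I would recall that every compact Hausdorff space is a $k$-space: it is locally compact, so this is Theorem~3.3.21 of \cite{Engelking1989}.

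With these two observations in hand, $Y$ is a Hausdorff countably pracompact $k$-space while $X$ is Hausdorff countably pracompact by hypothesis, so Proposition~\ref{proposition-2.13} applies directly and yields that $X\times Y$ is countably pracompact, which is precisely the statement of the corollary.

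I do not expect any real obstacle: the argument is a routine instantiation of Proposition~\ref{proposition-2.13}. The only point deserving a moment's care is the choice of which clause of that proposition to use — the $k$-space clause is the correct one, since a compactum need not be sequentially compact, whereas it is always a $k$-space and always countably pracompact.
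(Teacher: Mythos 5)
Your proposal is correct and matches the paper exactly: the authors likewise deduce the corollary directly from Proposition~\ref{proposition-2.13}, and your verification that a compactum is a countably pracompact Hausdorff $k$-space (via local compactness) supplies precisely the routine details they leave implicit.
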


\begin{theorem}\label{theorem-2.15}
Let $\left\{ B^0_{\lambda_i}(S_i) \colon i\in\mathscr{I}\right\}$ be a non-empty family of semiregular countably pracompact topological Brandt $\lambda^0_i$-extension of countably pracompact semiregular semitopological monoids such that the Tychonoff product $\prod\left\{ S_i \colon i\in\mathscr{I}\right\}$ is a countably pracompact space. Then the direct product \linebreak $\prod\left\{ B^0_{\lambda_i}(S_i) \colon i\in\mathscr{I}\right\}$ with the Tychonoff topology is a semiregular countably pracompact semitopological semigroup.
\end{theorem}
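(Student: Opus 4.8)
The plan is to imitate the proofs of Theorems~\ref{theorem-2.7} and~\ref{theorem-2.12}, working in the class of countably pracompact spaces and using Corollary~\ref{corollary-2.14} as the ``product with a compactum'' tool. The semiregularity of the product is settled exactly as in the proof of Theorem~\ref{theorem-2.12}: by Lemma~20 of~\cite{Ravsky-arxiv1003.5343v5} a Tychonoff product of regular open sets is regular open, hence a Tychonoff product of semiregular spaces is semiregular, so $\prod\left\{B^0_{\lambda_i}(S_i)\colon i\in\mathscr{I}\right\}$ with the Tychonoff topology is semiregular. It is also a semitopological semigroup, being a Tychonoff product of semitopological semigroups (separate continuity of multiplication on the product is checked coordinatewise via the projections). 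So only countable pracompactness remains to be established.

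For every $i\in\mathscr{I}$ let $\mathscr{A}(\lambda_i\times\lambda_i)$ be the space and $g_i\colon\mathscr{A}(\lambda_i\times\lambda_i)\times S_i\to B^0_{\lambda_i}(S_i)$ the surjective map defined in the proof of Theorem~\ref{theorem-2.4}. Since a countably pracompact space is pseudocompact, each semiregular countably pracompact topological Brandt $\lambda^0_i$-extension $B^0_{\lambda_i}(S_i)$ is in particular the (unique) semiregular pseudocompact one supplied by Theorem~\ref{theorem-2.11}, and hence carries the topology generated by the base described there; in particular each $g_i$ is continuous. By Proposition~2.3.6 of~\cite{Engelking1989} the map $g=\prod_{i\in\mathscr{I}}g_i\colon \prod_{i\in\mathscr{I}}\mathscr{A}(\lambda_i\times\lambda_i)\times S_i\to\prod_{i\in\mathscr{I}}B^0_{\lambda_i}(S_i)$ is then a continuous surjection.

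Next, the space $\prod_{i\in\mathscr{I}}\mathscr{A}(\lambda_i\times\lambda_i)\times S_i$ is homeomorphic to $\left(\prod_{i\in\mathscr{I}}\mathscr{A}(\lambda_i\times\lambda_i)\right)\times\left(\prod_{i\in\mathscr{I}}S_i\right)$, whose first factor is a compactum by Tychonoff's theorem and whose second factor is, by hypothesis, a Hausdorff countably pracompact space; hence by Corollary~\ref{corollary-2.14} this product is countably pracompact, and therefore so is $\prod_{i\in\mathscr{I}}\mathscr{A}(\lambda_i\times\lambda_i)\times S_i$. It then suffices to know that a continuous image of a countably pracompact space is countably pracompact and to apply this to $g$: if $X$ is countably compact at a dense subset $A$ and $f\colon X\to Y$ is a continuous surjection, then $f(A)$ is dense in $Y$, and given an infinite set $B\subseteq f(A)$ we pick $a_b\in A\cap f^{-1}(b)$ for each $b\in B$; the set $\{a_b\colon b\in B\}$ is an infinite subset of $A$, so it has an accumulation point $x\in X$, and then $f(x)$ is an accumulation point of $B$ in $Y$, so $Y$ is countably compact at $f(A)$. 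Consequently $\prod\left\{B^0_{\lambda_i}(S_i)\colon i\in\mathscr{I}\right\}=g\!\left(\prod_{i\in\mathscr{I}}\mathscr{A}(\lambda_i\times\lambda_i)\times S_i\right)$ is countably pracompact, which will finish the proof.

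The only step that is not entirely routine is the last one, the preservation of countable pracompactness under continuous surjections (the ``simple verification'' analogue of the argument used in the proof of Theorem~\ref{theorem-2.7}). The subtle point there is that ``accumulation point'' has to be read as $\omega$-accumulation point, i.e.\ every neighbourhood of the point contains infinitely many points of the set; since all spaces here are Hausdorff, hence $T_1$, this reading is automatic, but it is exactly what is needed in order to transfer the accumulation point of $\{a_b\colon b\in B\}$ over to $B$.
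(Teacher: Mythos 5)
Your proposal is correct and follows essentially the same route as the paper: the same factorization through $\prod_{i}\mathscr{A}(\lambda_i\times\lambda_i)\times S_i$, the same appeal to Theorem~\ref{theorem-2.11} for continuity of the maps $g_i$ and to Corollary~\ref{corollary-2.14} for countable pracompactness of the product with the compactum $\prod_{i}\mathscr{A}(\lambda_i\times\lambda_i)$. The only difference is cosmetic: where the paper cites Lemma~8 of \cite{GutikPavlyk2013a} for the preservation of countable pracompactness under continuous surjections, you supply the (correct) verification yourself, including the $T_1$ point about $\omega$-accumulation, and you spell out the semiregularity of the product, which the paper's proof of this particular theorem leaves implicit.
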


\begin{proof}
Let for every $i\in\mathscr{I}$, $\mathscr{A}(\lambda_i\times\lambda_i)$ be a space and $g_i\colon\mathscr{A}(\lambda_i\times\lambda_i)\times S_i\to B^0_{\lambda_i}(S_i)$ be a map defined in the proof of Theorem~\ref{theorem-2.4}. Theorem~\ref{theorem-2.11} implies that the map $g_i$ is continuous for every $i\in\mathscr{I}$. Since the space $\prod_{i\in\mathscr{I}}\mathscr{A}(\lambda_i\times\lambda_i)\times S_i$ is homeomorphic to $\prod_{i\in\mathscr{I}}\mathscr{A}(\lambda_i\times\lambda_i)\times \prod_{i\in\mathscr{I}} S_i$, Theorem~3.2.4 from \cite{Engelking1989} and Corollary~\ref{corollary-2.14} imply that the Tychonoff product $\prod_{i\in\mathscr{I}}\mathscr{A}(\lambda_i\times\lambda_i)\times S_i$ is a countably pracompact space. Then by Theorem~\ref{theorem-2.11} and Proposition~2.3.6 of \cite{Engelking1989} the map $g\colon \prod_{i\in\mathscr{I}}\mathscr{A}(\lambda_i\times\lambda_i)\times S_i\to \prod_{i\in\mathscr{I}}B^0_{\lambda_i}(S_i)$ defined by the formula $g= \prod_{i\in\mathscr{I}}g_i$ is continuous, and since by Lemma~8 from \cite{GutikPavlyk2013a} every continuous image of a countably pracompact space is countably pracompact, we see that the direct product $\prod\{B^0_{\lambda_i}(S_i)\colon i\in\mathscr{I}\}$ with the Tychonoff topology is a semiregular pseudocompact semitopological semigroup.
\end{proof}

Since for any semitopological monoid $(S,\tau)$ with zero and for any finite cardinal $\lambda\geqslant 1$  there exists a unique topological Brandt $\lambda^0$-extension
$\left(B^0_{\lambda}(S),\tau_{B}\right)$ of $(S,\tau)$ in the class of semitopological semigroups, the proof of the following theorem is similar to the proofs of Theorems~\ref{theorem-2.12} and \ref{theorem-2.15}.

\begin{theorem}\label{theorem-2.16}
Let $\left\{ B^0_{\lambda_i}(S_i) \colon i\in\mathscr{I}\right\}$ be a non-empty family of Hausdorff pseudocompact (countably pracompact) topological Brandt $\lambda^0_i$-extension of Hausdorff pseudocompact (countably pracompact) semitopological monoids such that the Tychonoff product $\prod\left\{ S_i \colon i\in\mathscr{I}\right\}$ is a Hausdorff pseudocompact (countably pracompact) space and every cardinal $\lambda_i$, $i\in\mathscr{I}$, is non-zero and finite. Then the direct product $\prod\left\{ B^0_{\lambda_i}(S_i) \colon i\in\mathscr{I}\right\}$ with the Tychonoff topology is a Hausdorff pseudocompact (countably pracompact) semitopological semigroup.
\end{theorem}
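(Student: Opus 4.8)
The plan is to mimic the proofs of Theorems~\ref{theorem-2.12} and~\ref{theorem-2.15}: I would realize each factor $B^0_{\lambda_i}(S_i)$ as a continuous surjective image of the product of a compactum with $S_i$, and then transport pseudocompactness (respectively, countable pracompactness) along the resulting map $g=\prod_{i\in\mathscr{I}}g_i$. The only substantive change is that, the cardinals $\lambda_i$ being finite, the auxiliary spaces become finite discrete spaces, so the semiregularity hypotheses imposed in Theorems~\ref{theorem-2.12} and~\ref{theorem-2.15} are no longer needed.

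First, for every $i\in\mathscr{I}$ I would let $\mathscr{A}(\lambda_i\times\lambda_i)$ be the discrete space of cardinality $\lambda_i^2+1$ with a distinguished point $a_i$, and define the map $g_i\colon\mathscr{A}(\lambda_i\times\lambda_i)\times S_i\to B^0_{\lambda_i}(S_i)$ by formula~(\ref{eq-1}), exactly as in the finite-cardinal case treated in the proof of Theorem~\ref{theorem-2.4}. Since for a finite cardinal $\lambda_i\geqslant 1$ the topological Brandt $\lambda_i^0$-extension of $S_i$ in the class of semitopological semigroups is unique and carries the topology described in Theorems~\ref{theorem-2.1} and~\ref{theorem-2.11}, the map $g_i$ is continuous (a direct check on the basic neighbourhoods of a non-zero element and of the zero), and it is clearly surjective.

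Next I would pass to products. The space $\prod_{i\in\mathscr{I}}\big(\mathscr{A}(\lambda_i\times\lambda_i)\times S_i\big)$ is homeomorphic to $\big(\prod_{i\in\mathscr{I}}\mathscr{A}(\lambda_i\times\lambda_i)\big)\times\big(\prod_{i\in\mathscr{I}}S_i\big)$, and by Theorem~3.2.4 of \cite{Engelking1989} the factor $\prod_{i\in\mathscr{I}}\mathscr{A}(\lambda_i\times\lambda_i)$ is a compactum, being a Tychonoff product of finite (hence compact Hausdorff) spaces; note that this product need not be discrete, which is precisely the place where finiteness of the $\lambda_i$ is exploited. Hence, in the pseudocompact case, Corollary~3.3 of \cite{GutikRavsky2013??} shows that $\prod_{i\in\mathscr{I}}\big(\mathscr{A}(\lambda_i\times\lambda_i)\times S_i\big)$ is pseudocompact, and in the countably pracompact case Corollary~\ref{corollary-2.14} shows it is countably pracompact.

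Finally, putting $g=\prod_{i\in\mathscr{I}}g_i$, Proposition~2.3.6 of \cite{Engelking1989} gives that $g$ is continuous, and it is surjective. Since a continuous surjective image of a Hausdorff pseudocompact space is pseudocompact (as used in the proof of Theorem~\ref{theorem-2.4}), and by Lemma~8 of \cite{GutikPavlyk2013a} a continuous image of a countably pracompact space is countably pracompact, the space $\prod_{i\in\mathscr{I}}B^0_{\lambda_i}(S_i)$ is pseudocompact in the first case and countably pracompact in the second. It is Hausdorff as a Tychonoff product of Hausdorff spaces, and its coordinatewise operation is separately continuous since a Tychonoff product of semitopological semigroups is again a semitopological semigroup; this gives the conclusion. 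I expect the only mildly delicate points to be the verification that each $g_i$ is continuous for finite $\lambda_i$ and the observation that $\prod_{i\in\mathscr{I}}\mathscr{A}(\lambda_i\times\lambda_i)$ is compact; the rest is a transcription of the arguments already given for Theorems~\ref{theorem-2.12} and~\ref{theorem-2.15}.
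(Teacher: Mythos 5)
Your proposal is correct and follows exactly the route the paper intends: the paper gives no separate proof of Theorem~\ref{theorem-2.16}, stating only that it is ``similar to the proofs of Theorems~\ref{theorem-2.12} and \ref{theorem-2.15}'' because the finite-cardinal Brandt $\lambda^0$-extension is unique, and your adaptation (finite discrete auxiliary spaces $\mathscr{A}(\lambda_i\times\lambda_i)$, compactness of their product, transport along the continuous surjection $g=\prod_{i}g_i$, with Corollary~3.3 of \cite{GutikRavsky2013??} and Corollary~\ref{corollary-2.14} supplying the product step) is precisely that argument. Your observation that semiregularity is only needed in the infinite-cardinal case to pin down the topology via Theorem~\ref{theorem-2.11} is also the right explanation for why it can be dropped here.
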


By Theorem~2.19 of \cite{GutikRavsky2013??} we have that a topological Brandt $\lambda^0$-extension $\left(B^0_{\lambda}(S), \tau_{B}\right)$ of a topological monoid $(S,\tau_S)$ with zero in the class of Hausdorff topological semigroups is pseudocompact if and only if cardinal $\lambda$ is finite and the space $(S,\tau_S)$ is pseudocompact. Hence Theorem~\ref{theorem-2.16} and Theorem~2.19 of \cite{GutikRavsky2013??} imply the following:

\begin{theorem}\label{theorem-2.17}
Let $\left\{ B^0_{\lambda_i}(S_i) \colon i\in\mathscr{I}\right\}$ be a non-empty family of Hausdorff pseudocompact (countably pracompact) topological Brandt $\lambda^0_i$-extension of Hausdorff pseudocompact (countably pracompact) topological monoids in the class of Hausdorff topological semigroups such that the Tychonoff product $\prod\left\{ S_i \colon i\in\mathscr{I}\right\}$ is a Hausdorff pseudocompact (countably pracompact) space. Then the direct product $\prod\left\{ B^0_{\lambda_i}(S_i) \colon i\in\mathscr{I}\right\}$ with the Tychonoff topology is a Hausdorff pseudocompact (countably pracompact) topological semigroup.
\end{theorem}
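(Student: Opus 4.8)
The plan is to derive the statement directly from Theorem~\ref{theorem-2.16} and from Theorem~2.19 of \cite{GutikRavsky2013??}, the only substantive step being the observation that, in the present setting, every cardinal $\lambda_i$ is forced to be finite. First I would recall that every countably pracompact space is pseudocompact, so in both the pseudocompact and the countably pracompact case each $\big(B^0_{\lambda_i}(S_i),\tau^0_{B(S_i)}\big)$ is, in particular, a pseudocompact topological Brandt $\lambda_i^0$-extension of the Hausdorff topological monoid $S_i$ with zero in the class of Hausdorff topological semigroups. By the cited Theorem~2.19 of \cite{GutikRavsky2013??}, pseudocompactness of such an extension is equivalent to the conjunction ``$\lambda_i$ is finite and $S_i$ is pseudocompact''; hence $\lambda_i$ is a non-zero finite cardinal for every $i\in\mathscr{I}$.

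Once finiteness of every $\lambda_i$ has been established, I would check that the hypotheses of Theorem~\ref{theorem-2.16} are satisfied. Every topological semigroup is a semitopological semigroup and every topological monoid is a semitopological monoid, so each $\big(B^0_{\lambda_i}(S_i),\tau^0_{B(S_i)}\big)$ is a Hausdorff pseudocompact (countably pracompact) topological Brandt $\lambda_i^0$-extension of the Hausdorff pseudocompact (countably pracompact) semitopological monoid $S_i$ with zero; here I would also invoke the uniqueness, for finite $\lambda_i$, of the topological Brandt $\lambda_i^0$-extension of $S_i$ in the class of semitopological semigroups (recalled just before Theorem~\ref{theorem-2.16}) in order to see that the given extension coincides with the one to which Theorem~\ref{theorem-2.16} refers. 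Since $\prod\{S_i\colon i\in\mathscr{I}\}$ is by hypothesis pseudocompact (countably pracompact) and every $\lambda_i$ is non-zero and finite, Theorem~\ref{theorem-2.16} then yields that $\prod\big\{B^0_{\lambda_i}(S_i)\colon i\in\mathscr{I}\big\}$ with the Tychonoff topology is a Hausdorff pseudocompact (countably pracompact) \emph{semi}topological semigroup.

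It remains to upgrade ``semitopological'' to ``topological''. Since each factor $\big(B^0_{\lambda_i}(S_i),\tau^0_{B(S_i)}\big)$ is by hypothesis a topological semigroup, its multiplication $m_i$ is jointly continuous; the multiplication on the Tychonoff product is the composite of the canonical homeomorphism $\prod_{i\in\mathscr{I}}\big(B^0_{\lambda_i}(S_i)\times B^0_{\lambda_i}(S_i)\big)\cong\big(\prod_{i\in\mathscr{I}}B^0_{\lambda_i}(S_i)\big)\times\big(\prod_{i\in\mathscr{I}}B^0_{\lambda_i}(S_i)\big)$ with $\prod_{i\in\mathscr{I}}m_i$, and the latter is continuous by Proposition~2.3.6 of \cite{Engelking1989}; together with Hausdorffness and pseudocompactness (countable pracompactness) obtained above this gives the assertion. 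I do not expect any genuine obstacle in this argument: all of the topological content is already packaged in Theorem~\ref{theorem-2.16} (and, upstream of it, in Corollary~3.3 of \cite{GutikRavsky2013??}, Corollary~\ref{corollary-2.14} and the permanence of continuity under products of maps) and in the structural Theorem~2.19 of \cite{GutikRavsky2013??}. The single point demanding a little care is the reduction to finite $\lambda_i$, and in the countably pracompact case this rests only on the elementary implication ``countably pracompact $\Rightarrow$ pseudocompact'' so that Theorem~2.19 of \cite{GutikRavsky2013??} applies.
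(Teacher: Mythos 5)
Your proposal is correct and follows essentially the same route as the paper: the paper likewise derives Theorem~\ref{theorem-2.17} by combining Theorem~2.19 of \cite{GutikRavsky2013??} (which forces each $\lambda_i$ to be finite and each $S_i$ pseudocompact) with Theorem~\ref{theorem-2.16}. Your additional remarks --- that countable pracompactness implies pseudocompactness so the cardinality reduction works in both cases, and that joint continuity of the product multiplication follows from joint continuity on each factor --- are exactly the details left implicit in the paper.
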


The following lemma describes the main property of a base of the topology at zero of a Hausdorff pseudocompact topological Brandt $\lambda^0$-extension of a Hausdorff pseudocompact semitopological monoid in the class of Hausdorff semitopological semigroups.

\begin{lemma}\label{lemma-2.18}
Let $\left(B^0_{\lambda}(S),\tau_{B}^S\right)$ be any Hausdorff pseudocompact  topological Brandt $\lambda^0$-extension of a pseudocompact semitopological monoid $(S,\tau)$ with zero in the class of semitopological semigroups. Then for every open neighbourhood $U(0)$ of zero $0$ in $\left(B^0_{\lambda}(S),\tau_{B}^S\right)$ there exist at most finitely many pairs of indices $(\alpha_1,\beta_1),\ldots, (\alpha_n,\beta_n)\in\lambda\times\lambda$ such that $S^*_{\alpha_i,\beta_i}\nsubseteq\operatorname{cl}_{B^0_{\lambda}(S)}(U(0))$ for any $i=1,\ldots, n$.
\end{lemma}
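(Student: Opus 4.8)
The plan is to argue by contradiction using the fact that a locally finite family of nonempty open sets in a pseudocompact space must be finite. Suppose, to the contrary, that there is an open neighbourhood $U(0)$ of $0$ in $\left(B^0_{\lambda}(S),\tau_{B}^S\right)$ for which infinitely many pairs $(\alpha,\beta)\in\lambda\times\lambda$ satisfy $S^*_{\alpha,\beta}\nsubseteq\operatorname{cl}_{B^0_{\lambda}(S)}(U(0))$; fix a countably infinite set $\mathscr{J}$ of such pairs. For each $(\alpha,\beta)\in\mathscr{J}$ choose a point $x_{\alpha,\beta}\in S^*_{\alpha,\beta}\setminus\operatorname{cl}_{B^0_{\lambda}(S)}(U(0))$; its complement is open, so there is a basic open neighbourhood $V_{\alpha,\beta}$ of $x_{\alpha,\beta}$ of the form $(W(s_{\alpha,\beta})\setminus\{0_S\})_{\alpha,\beta}$ (with $W(s_{\alpha,\beta})\in\mathscr{B}_S(s_{\alpha,\beta})$) that misses $U(0)$, hence in particular misses $0$.

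The key point is that the family $\{V_{\alpha,\beta}\colon(\alpha,\beta)\in\mathscr{J}\}$ is locally finite in $\left(B^0_{\lambda}(S),\tau_{B}^S\right)$, which together with its infiniteness contradicts pseudocompactness. To see local finiteness: the sets $V_{\alpha,\beta}$ sit in pairwise distinct blocks $S_{\alpha,\beta}$, so a point in $S_{\gamma,\delta}$ has the whole-block neighbourhood $S^*_{\gamma,\delta}$ (or a subset of it if $s=0_S$ there is excluded — use the first clause of the base in Theorem~\ref{theorem-2.1}) which meets at most the single member $V_{\gamma,\delta}$. It remains to check the point $0$. By construction each $V_{\alpha,\beta}$ avoids $U(0)$, so $0\notin\operatorname{cl}_{B^0_{\lambda}(S)}\big(\bigcup_{(\alpha,\beta)\in\mathscr{J}}V_{\alpha,\beta}\big)$ — indeed $U(0)$ itself is a neighbourhood of $0$ disjoint from every $V_{\alpha,\beta}$ — so $0$ has a neighbourhood meeting no member of the family at all. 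Thus every point of $B^0_{\lambda}(S)$ has a neighbourhood meeting at most one $V_{\alpha,\beta}$, the family is locally finite and infinite, and pseudocompactness of $\left(B^0_{\lambda}(S),\tau_{B}^S\right)$ is violated.

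The main obstacle, and the place to be careful, is the verification of local finiteness at points lying in the "active" blocks $S_{\alpha,\beta}$ with $(\alpha,\beta)\in\mathscr{J}$: one must confirm that the basic neighbourhoods of such points given by clause $(i)$ of Theorem~\ref{theorem-2.1} are contained in a single block and therefore cannot meet $V_{\alpha',\beta'}$ for any $(\alpha',\beta')\ne(\alpha,\beta)$; the only subtlety is the role of $0_S$, which is handled by the fact that the basic sets are of the form $(U(s)\setminus\{0_S\})_{\alpha,\beta}$, so they never reach across blocks through the identification of the various $0_S$-copies with $0$. Once this is in place the argument is complete, since the only genuinely global point, $0$, is shielded by the original neighbourhood $U(0)$.
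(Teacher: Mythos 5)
Your proof is correct and takes essentially the same route as the paper's: both argue by contradiction, producing for infinitely many pairs $(\alpha,\beta)$ a non-empty open set $V_{\alpha,\beta}\subseteq S^*_{\alpha,\beta}$ disjoint from $U(0)$, and concluding that this infinite family is locally finite, contradicting pseudocompactness; the paper merely cites Lemma~3 of \cite{GutikPavlyk2013a} for the local finiteness, which you verify by hand. One point to repair: the lemma concerns an \emph{arbitrary} Hausdorff pseudocompact topological Brandt $\lambda^0$-extension, whose topology need not have the form described in Theorem~\ref{theorem-2.1} --- that theorem describes the unique \emph{countably compact} extension, and Example~\ref{example-2.26} shows that pseudocompact extensions with a different topology exist --- so you cannot invoke clause $(i)$ of Theorem~\ref{theorem-2.1} to conclude that a non-zero point of $S^*_{\gamma,\delta}$ has a basic neighbourhood contained in $S^*_{\gamma,\delta}$. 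What your argument actually needs is that each $S^*_{\gamma,\delta}$ is open in \emph{every} topological Brandt $\lambda^0$-extension in the class of semitopological semigroups; this is among the preliminary lemmas of \cite{GutikPavlyk2013a}, and also follows directly from separate continuity: the map $r(x)=(\gamma,1,\gamma)\cdot x\cdot(\delta,1,\delta)$ (with $1$ the unit of $S$) is a continuous retraction of $B^0_{\lambda}(S)$ onto $S_{\gamma,\delta}$ satisfying $r^{-1}\big(S^*_{\gamma,\delta}\big)=S^*_{\gamma,\delta}$, and $S^*_{\gamma,\delta}$ is open in the Hausdorff subspace $S_{\gamma,\delta}$. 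With that substitution your verification of local finiteness, including the treatment of the point $0$ via $U(0)$ itself, is complete.
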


\begin{proof}
Suppose to the contrary: there exist an open neighbourhood $V(0)$ of zero $0$ in $\left(B^0_{\lambda}(S),\tau_{B}^S\right)$ and infinitely many pairs of indices $(\alpha_1,\beta_1),\ldots, (\alpha_n,\beta_n),\ldots\in\lambda\times\lambda$ such that $S^*_{\alpha_i,\beta_i}\nsubseteq\operatorname{cl}_{B^0_{\lambda}(S)}(U(0))$ for every positive integer $i$. Then by Proposition~1.1.1 of \cite{Engelking1989} for every positive integer $i$ there exists a non-empty open subset $W_{\alpha_i,\beta_i}$ in $\left(B^0_{\lambda}(S),\tau_{B}^S\right)$ such that $W_{\alpha_i,\beta_i}\subseteq S^*_{\alpha_i,\beta_i}$ and $V(0)\cap W_{\alpha_i,\beta_i}=\varnothing$. Hence by Lemma~3 of \cite{GutikPavlyk2013a} we have that $\left\{W_{\alpha_i,\beta_i}\colon i=1,2,3,\ldots\right\}$ is an infinite locally finite family in $\left(B^0_{\lambda}(S),\tau_{B}^S\right)$ which contradicts the pseudocompactness of the space $\left(B^0_{\lambda}(S),\tau_{B}^S\right)$. The obtained contradiction implies the statement of our lemma.
\end{proof}

Given a topological space $(X,\tau)$ Stone \cite{Stone1937} and Kat\v{e}tov \cite{Katetov} consider the topology $\tau_r$ on $X$ generated by the base consisting of all regular open sets of the space $(X,\tau)$. This topology is called the \emph{regularization} of the topology $\tau$. It is easy to see that if $(X,\tau)$ is a Hausdorff topological space then $(X,\tau_r)$ is a semiregular topological space.

\begin{example}\label{example-2.19}
Let $(S,\tau)$ be any semitopological monoid with zero. Then for any infinite cardinal $\lambda$ we define a topology $\tau_{B}^S$ on the
Brandt $\lambda^0$-extension $\left(B^0_{\lambda}(S),\tau_{B}^S\right)$ of $(S,\tau)$ in the following way. The topology $\tau_{B}^S$ is generated by the base
$\mathscr{B}_B=\bigcup\left\{\mathscr{B}_B(t)\colon t\in
B^0_{\lambda}(S)\right\}$, where:
\begin{itemize}
    \item[$(i)$] $\mathscr{B}_B(t)=\big\{(U(s) \setminus\{ 0_S\})_{\alpha,\beta}\colon U(s)\in \mathscr{B}_S(s)\big\}$, where $t=(\alpha,s,\beta)$ is a non-zero element of    $B^0_{\lambda}(S)$, $\alpha,\beta\in\lambda$;

    \item[$(ii)$] $\mathscr{B}_B(0)=\Big\{U_{A}(0)= \bigcup_{(\alpha,\beta)\in(\lambda\times\lambda) \setminus A}S_{\alpha,\beta}\cup \bigcup_{(\gamma,\delta)\in A} (U(0_S))_{\gamma,\delta} \colon A \hbox{~is a finite subset of~} \lambda\times\lambda$ and  $U(0_S)\in\mathscr{B}_S(0_S)\Big\}$, where $0$ is the zero of $B^0_{\lambda}(S)$,
\end{itemize}
and $\mathscr{B}_S(s)$ is a base of the topology $\tau$ at the point $s\in S$.

We observe that the space $\left(B^0_{\lambda}(S),\tau_{B}^S\right)$ is Hausdorff (resp., regular, Tychonoff, normal) if and only if the space $(S,\tau)$ is Hausdorff (resp., regular, Tychonoff, normal) (see: Propositions~21 and 22 in \cite{GutikPavlyk2013a}).
\end{example}

\begin{proposition}\label{proposition-2.20}
Let $\lambda$ be any infinite cardinal. If $(S,\tau)$ is a Hausdorff semitopological monoid with zero then $\left(B^0_{\lambda}(S),\tau_{B}^S\right)$ is a Hausdorff semitopological semigroup. Moreover, the space $(S,\tau)$ is pseudocompact if and only if so is $\left(B^0_{\lambda}(S),\tau_{B}^S\right)$.
\end{proposition}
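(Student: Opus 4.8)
The topology $\tau_{B}^{S}$ in the statement is exactly the one introduced in Example~\ref{example-2.19}, so Hausdorffness of $\left(B^0_{\lambda}(S),\tau_{B}^{S}\right)$ is already recorded there, and what remains is to check that the semigroup operation is separately continuous and that pseudocompactness passes in both directions between $(S,\tau)$ and $\left(B^0_{\lambda}(S),\tau_{B}^{S}\right)$. Separate continuity is a direct verification from the base $\mathscr{B}_{B}$, identical to the one carried out inside the proof of Theorem~\ref{theorem-2.1} (the countable compactness hypothesis there plays no role in it): for a fixed $t=(\gamma,b,\delta)$ the right translation $x\mapsto x\cdot t$ carries a block $S^{*}_{\alpha,\beta}$ with $\beta\neq\gamma$ into $\{0\}$, carries $S^{*}_{\alpha,\gamma}$ into $S^{*}_{\alpha,\delta}\cup\{0\}$ using continuity of the right translation by $b$ in $S$ together with openness of $S\setminus\{0_{S}\}$, and is continuous at $0$ because, given a basic neighbourhood $U_{A'}(0)$ of $0$, one chooses a suitable finite $A\subseteq\lambda\times\lambda$ (depending on $A'$, $\gamma$, $\delta$) and a neighbourhood $U(0_{S})$ of $0_{S}$ in $S$ with $U(0_{S})\cdot b$ contained in the relevant neighbourhood of $0_{S}$, so that $U_{A}(0)$ with this $U(0_{S})$ is mapped into $U_{A'}(0)$; left translations are symmetric.

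For the substantial implication, assume that $(S,\tau)$ is pseudocompact and suppose, towards a contradiction, that $\left(B^0_{\lambda}(S),\tau_{B}^{S}\right)$ is not; being Hausdorff, it then carries an infinite locally finite family $\mathscr{V}=\{V_{i}\colon i\in\mathscr{J}\}$ of non-empty open subsets. By local finiteness at $0$ there is a basic neighbourhood $U_{A}(0)$ of $0$, with $A\subseteq\lambda\times\lambda$ finite, that meets only finitely many members of $\mathscr{V}$. Since $U_{A}(0)\supseteq\{0\}\cup\bigcup\left\{S^{*}_{\mu,\nu}\colon(\mu,\nu)\in(\lambda\times\lambda)\setminus A\right\}$ and $B^0_{\lambda}(S)=\{0\}\cup\bigcup\left\{S^{*}_{\mu,\nu}\colon(\mu,\nu)\in\lambda\times\lambda\right\}$, each $V_{i}$ disjoint from $U_{A}(0)$ is contained in $\bigcup\left\{S^{*}_{\mu,\nu}\colon(\mu,\nu)\in A\right\}$; thus all but finitely many members of $\mathscr{V}$ lie in this finite union of blocks. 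Choosing a point $p_{i}\in V_{i}$ for each such $i$ and using finiteness of $A$, we obtain a single pair $(\mu,\nu)\in A$ and an infinite subset $\mathscr{J}'\subseteq\mathscr{J}$ with $p_{i}\in S^{*}_{\mu,\nu}$ for every $i\in\mathscr{J}'$.

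Consider now the family $\left\{V_{i}\cap S_{\mu,\nu}\colon i\in\mathscr{J}'\right\}$ inside the subspace $S_{\mu,\nu}$ of $\left(B^0_{\lambda}(S),\tau_{B}^{S}\right)$. Its members are open in $S_{\mu,\nu}$ and non-empty (the $i$-th contains $p_{i}$), and the family is locally finite in $S_{\mu,\nu}$ as the restriction of the locally finite family $\mathscr{V}$. In any locally finite indexed family each point belongs to only finitely many members, so each non-empty member is attained for only finitely many indices; since $\mathscr{J}'$ is infinite, the family has infinitely many distinct members. But the map $s\mapsto(\mu,s,\nu)$ (with $0_{S}\mapsto 0$) is a homeomorphism of $(S,\tau)$ onto $S_{\mu,\nu}$, so $(S,\tau)$ would admit an infinite locally finite family of non-empty open subsets, contradicting its pseudocompactness. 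Hence $\left(B^0_{\lambda}(S),\tau_{B}^{S}\right)$ is pseudocompact. It is here that infiniteness of $\lambda$ is used: it guarantees that every neighbourhood of $0$ really contains all but finitely many whole blocks.

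The converse is easy: the map $r\colon B^0_{\lambda}(S)\to S_{\alpha,\alpha}$ that is the identity on $S^{*}_{\alpha,\alpha}$ and sends every other element, in particular $0$, to $0$, is a continuous retraction. Indeed, preimages of basic neighbourhoods of points of $S^{*}_{\alpha,\alpha}$ are the same sets, and for the basic neighbourhood of $0$ in $S_{\alpha,\alpha}$ determined by $U(0_{S})\in\mathscr{B}_{S}(0_{S})$ one checks that $r^{-1}$ of it is exactly $U_{\{(\alpha,\alpha)\}}(0)$ with this same $U(0_{S})$, which is open. Therefore $(S,\tau)\cong S_{\alpha,\alpha}$ is a continuous image of $\left(B^0_{\lambda}(S),\tau_{B}^{S}\right)$, and a continuous image of a pseudocompact space is pseudocompact. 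I expect the main obstacle to be the first implication, whose heart is precisely the observation just used about neighbourhoods of $0$: any locally finite family of open subsets of $B^0_{\lambda}(S)$ is, up to finitely many members, confined to finitely many blocks, so the whole question localizes to a single copy of $(S,\tau)$.
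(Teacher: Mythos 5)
Your proof is correct and follows essentially the same route as the paper: separate continuity by direct inspection of the base, and the key observation that a basic neighbourhood $U_{A}(0)$ of zero confines all but finitely many members of a locally finite family to the finitely many blocks indexed by $A$, reducing to pseudocompactness of (copies of) $S$. The only cosmetic differences are that you localize to a single block by pigeonhole where the paper invokes pseudocompactness of a finite union of pseudocompact subspaces, and you prove the converse implication via an explicit continuous retraction onto $S_{\alpha,\alpha}$ where the paper cites Lemma~9 of the earlier Gutik--Pavlyk paper.
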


\begin{proof}
The Hausdorffness of the space $\left(B^0_{\lambda}(S),\tau_{B}^S\right)$ follows from Proposition~21 from \cite{GutikPavlyk2013a}.

Let $a$ and $b$ are arbitrary elements of $S$ and $W(ab)$, $U(a)$, $V(b)$ be arbitrary open neighborhoods of the elements $ab$, $a$ and $b$, respectively, such that $U(a)\cdot b\subseteq W(ab)$ and $a\cdot V(b)\subseteq W(ab)$. Then we have that the following conditions hold:
\begin{itemize}
  \item[$(i)$] $(U(a))_{\alpha,\beta}\cdot (\beta,b,\gamma)\subseteq (W(ab))_{\alpha,\gamma}$;
  \item[$(ii)$] $(\alpha,a,\beta)\cdot (V(b))_{\beta,\gamma}\subseteq (W(ab))_{\alpha,\gamma}$;
  \item[$(iii)$] if $\beta\neq\gamma$ then $(U(a))_{\alpha,\beta}\cdot (\gamma,b,\delta)=\{0\}\subseteq W_{A}(0)$ and $(\alpha,a,\beta)\cdot (V(b))_{\gamma,\delta}=\{0\}\subseteq W_{A}(0)$ for every finite subset $A$ of $\lambda\times\lambda$ and every $W(0_S)\in\mathscr{B}_S(0_S)$;
  \item[$(iv)$] $W_{A}(0)\cdot 0=\{0\}\subseteq W_{A}(0)$ and $0\cdot W_{A}(0)=\{0\}\subseteq W_{A}(0)$ for every finite subset $A$ of $\lambda\times\lambda$ and every $W(0_S)\in\mathscr{B}_S(0_S)$;
  \item[$(v)$] $(U(a))_{\alpha,\beta}\cdot 0=\{0\}\subseteq W_{A}(0)$ and $0\cdot (V(b))_{\beta,\gamma}=\{0\}\subseteq W_{A}(0)$ for every finite subset $A$ of $\lambda\times\lambda$ and every $W(0_S)\in\mathscr{B}_S(0_S)$;
  \item[$(vi)$] $(\alpha,a,\beta)\cdot V_{B}(0)\subseteq W_{A}(0)$ for every finite subset $\left\{\alpha_1,\ldots,\alpha_k\right\}\subset\lambda$ and every $W(0_S)\in\mathscr{B}_S(0_S)$, where $A=\left\{\alpha,\alpha_1,\ldots,\alpha_k\right\}\times \left\{\alpha_1,\ldots,\alpha_k\right\}$ and $B=\left\{(\beta,\alpha_1),\ldots,(\beta,\alpha_k)\right\}$;
  \item[$(vii)$] $ V_{B}(0)\cdot(\alpha,a,\beta)\subseteq W_{A}(0)$ for every finite subset $\left\{\alpha_1,\ldots,\alpha_k\right\}\subset\lambda$ and every $W(0_S)\in\mathscr{B}_S(0_S)$, where $A=\left\{\alpha_1,\ldots,\alpha_k\right\}\times \left\{\beta,\alpha_1,\ldots,\alpha_k\right\}$ and $B=\left\{(\alpha_1,\alpha),\ldots,(\alpha_k,\alpha)\right\}$,
\end{itemize}
for each $\alpha,\beta,\gamma,\delta\in\lambda$. This completes the proof of separate continuity of the semigroup operation in $\left(B^0_{\lambda}(S),\tau_{B}^S\right)$.

The implication $(\Leftarrow)$ of the last statement follows from Lemma~9 of \cite{GutikPavlyk2013a}.  To show the converse implication assume that $\{U_i\colon i\in\mathscr{I}\}$ is any locally finite family of open subsets of $\left(B^0_{\lambda}(S),\tau_{B}^S\right)$. Without loss of generality we can assume that $0\notin U_i$ for any $i\in\mathscr{I}$. Then the definition of the base of the topology $\tau_{B}^S$ at zero implies that there exists a finite family of pairs of indices $\left\{(\alpha_1,\beta_1),\ldots,(\alpha_k,\beta_k)\right\}\subset \lambda\times\lambda$ such that almost all elements of the family $\{U_i\colon i\in\mathscr{I}\}$ are contained in the set $S^*_{\alpha_1,\beta_1}\cup\cdots\cup S^*_{\alpha_k,\beta_k}$. Since a union of a finite family of pseudocompact spaces is pseudocompact, $S_{\alpha_1,\beta_1}\cup\cdots\cup S_{\alpha_k,\beta_k}$ with the topology induced from $\left(B^0_{\lambda}(S),\tau_{B}^S\right)$ is pseudocompact space. This implies that the family $\{U_i\colon i\in\mathscr{I}\}$ is finite.
\end{proof}

\begin{example}\label{example-2.21}
Let $\lambda$ be any infinite cardinal. Let $(S,\tau_S)$ be a Hausdorff pseudocompact semitopological monoid with zero $0_S$ and $\left(B^0_{\lambda}(S),\tau^0_{B_S}\right)$ be a pseudocompact topological Brandt $\lambda^0$-extension of $(S,\tau_S)$ in the class of Hausdorff semitopological semigroups.

For every open neighbourhood $U(0)$ of zero in $\left(B^0_{\lambda}(S),\tau^0_{B_S}\right)$ we put
\begin{equation*}
    A_{U(0)}=\left\{(\alpha,\beta)\in\lambda\times\lambda\colon S_{\alpha,\beta}\nsubseteq\operatorname{cl}_{B^0_{\lambda}(S)}(U(0)) \right\}.
\end{equation*}

Let $\pi_{B_S}\colon B_{\lambda}(S)\rightarrow B^0_{\lambda}(S)= B_{\lambda}(S)/\mathscr{J}$ be the natural homomorphisms, where
\begin{equation*}
 \mathscr{J}=\{ 0\}\cup\{(\alpha, 0_S, \beta)\colon 0_S \hbox{~is zero of~} S\}
\end{equation*}
is an ideal of the semigroup $B_\lambda(S)$.

We generate a topology $\widehat{\tau}_{B_S}$ on the Brandt
$\lambda$-extension $B_{\lambda}(S)$ by a base
$\widehat{\mathscr{B}}_B=\bigcup\left\{\widehat{\mathscr{B}}_B(t)\colon t\in
B_{\lambda}(S)\right\}$, where:
\begin{itemize}
    \item[$(i)$] $\widehat{\mathscr{B}}_B(\alpha,s,\beta)=\big\{(U(s))_{\alpha,\beta}\colon U(s)\in \mathscr{B}_S(s)\big\}$, for all $s\in S$ and $\alpha,\beta\in\lambda$;

    \item[$(ii)$] $\widehat{\mathscr{B}}_B(0)=\Big\{U_\pi(0)=\pi^{-1}(U(0))\setminus \bigcup_{(\alpha,\beta)\in A_{U(0)}} S_{\alpha,\beta} \colon U(0) \hbox{~is an element of a base of the topology~} \tau^0_{B_S}$ at zero $0$ of $B^0_{\lambda}(S)\Big\}$,
\end{itemize}
and $\mathscr{B}_S(s)$ is a base of the topology $\tau$ at the point $s\in S$.
\end{example}

\begin{proposition}\label{proposition-2.22}
Let $\lambda$ be any infinite cardinal. Let  $\left(B^0_{\lambda}(S),\tau^0_{B_S}\right)$ be a pseudocompact topological Brandt $\lambda^0$-extension of a Hausdorff pseudocompact semitopological monoid with zero $(S,\tau_S)$ in the class of Hausdorff semitopological semigroup. Then $\left(B_{\lambda}(S),\widehat{\tau}_{B_S}\right)$ is a Hausdorff semitopological semigroup. Moreover, the space $(S,\tau)$ is pseudocompact if and only if so is $\left(B_{\lambda}(S),\widehat{\tau}_{B_S}\right)$.
\end{proposition}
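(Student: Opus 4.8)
The plan is to follow the three-step pattern of Proposition~\ref{proposition-2.20}: verify that $(B_{\lambda}(S),\widehat{\tau}_{B_S})$ is Hausdorff, then that its multiplication is separately continuous, and finally establish the pseudocompactness equivalence. Two preliminary facts will be used repeatedly. The quotient homomorphism $\pi=\pi_{B_S}\colon(B_{\lambda}(S),\widehat{\tau}_{B_S})\to(B^0_{\lambda}(S),\tau^0_{B_S})$ is continuous (a short check on $\widehat{\mathscr{B}}_B$: on each cell $\pi$ is locally a homeomorphism and $U_\pi(0)\subseteq\pi^{-1}(U(0))$ at the zero), and in fact its restriction to $B_{\lambda}(S)\setminus\mathscr{J}$ is a homeomorphism onto the open subspace $B^0_{\lambda}(S)\setminus\{0\}$, since by the monoid structure and separate continuity every cell $S_{\alpha,\beta}$ carries the topology of $(S,\tau_S)$ in both spaces. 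Moreover, by Lemma~\ref{lemma-2.18} the set $A_{U(0)}$ is finite for every neighbourhood $U(0)$ of the zero of $B^0_{\lambda}(S)$; hence in the ideal $\mathscr{J}=\pi^{-1}(0)=\{0\}\cup\{(\alpha,0_S,\beta)\colon\alpha,\beta\in\lambda\}$ each point $(\alpha,0_S,\beta)$ is isolated and each neighbourhood of $0$ is cofinite, so $\mathscr{J}$ is a compact subspace of $(B_{\lambda}(S),\widehat{\tau}_{B_S})$. For the Hausdorff property I argue by cases on distinct points $x\neq y$: if $x,y$ lie in one cell $\{\alpha\}\times S\times\{\beta\}$, use Hausdorffness of $(S,\tau_S)$; if in distinct cells, basic members of $\widehat{\mathscr{B}}_B$ separate them inside these disjoint cells; if $y=0$ and $x=(\alpha,s,\beta)$ with $s\neq0_S$, separate $\pi(x)\neq0$ from $0$ in the Hausdorff space $(B^0_{\lambda}(S),\tau^0_{B_S})$ and pull the two disjoint open sets back along $\pi$; and if $y=0$, $x=(\alpha,0_S,\beta)\in\mathscr{J}$, pick by Lemma~\ref{lemma-2.18} a neighbourhood $U_\pi(0)$ with $(\alpha,\beta)\in A_{U(0)}$, which misses the whole cell containing $x$.

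For separate continuity I would reproduce, essentially verbatim, the list of inclusions displayed in the proof of Proposition~\ref{proposition-2.20}: the translates $(U(a))_{\alpha,\beta}\cdot(\beta,b,\gamma)$ and $(\alpha,a,\beta)\cdot(V(b))_{\beta,\gamma}$ fall into a prescribed $(W(ab))_{\alpha,\gamma}$ by separate continuity of $(S,\tau_S)$; whenever the two inner indices disagree the relevant translate of a cell equals $\{0\}$; and a translation by $0$ is constant. The one genuinely new case is continuity of the translation by $(\gamma_0,b_0,\delta_0)$ at the zero of $B_{\lambda}(S)$: given a basic neighbourhood $U_\pi(0)$ of $0$, use separate continuity of $(B^0_{\lambda}(S),\tau^0_{B_S})$ to get a neighbourhood $V(0)$ of the zero of $B^0_{\lambda}(S)$ with $V(0)\cdot\pi((\gamma_0,b_0,\delta_0))\subseteq U(0)$, and then, since $A_{U(0)}$ is finite, shrink $V(0)$ so that the corresponding $V_\pi(0)$ avoids the finitely many cells $\{\sigma\}\times S\times\{\gamma_0\}$ with $(\sigma,\delta_0)\in A_{U(0)}$; for such a $V_\pi(0)$ one verifies $V_\pi(0)\cdot(\gamma_0,b_0,\delta_0)\subseteq U_\pi(0)$, and the left translations are symmetric.

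For the ``moreover'' clause the implication $(\Leftarrow)$ is immediate, since $(S,\tau_S)$ is pseudocompact by hypothesis (and in any case $(S,\tau_S)$ embeds in $(B_{\lambda}(S),\widehat{\tau}_{B_S})$ as a cell whose union with the isolated point $0$ is a regular closed, hence pseudocompact, subspace). For $(\Rightarrow)$ the key point is that $\pi$ is a perfect map: its fibres are singletons except for $\pi^{-1}(0)=\mathscr{J}$, which is compact, and $\pi$ is closed — off $\mathscr{J}$ this is trivial because $\pi$ is a homeomorphism there, while at $0$, given a closed set $C$ with $C\cap\mathscr{J}=\varnothing$ one separates the compact $\mathscr{J}$ from $C$ by an open set $O\supseteq\mathscr{J}$ and then, using the finiteness of $A_{U(0)}$, produces a neighbourhood $\widetilde{U}(0)$ of $0$ in $B^0_{\lambda}(S)$ with $\pi^{-1}(\widetilde{U}(0))\subseteq O$, so that $0\notin\operatorname{cl}(\pi(C))$. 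Granting this, let $\{W_i\colon i\in\mathscr{I}\}$ be a locally finite family of non-empty open subsets of $(B_{\lambda}(S),\widehat{\tau}_{B_S})$. Only finitely many $W_i$ meet the compact set $\mathscr{J}$; discarding those, the remaining $W_i$ lie in $B_{\lambda}(S)\setminus\mathscr{J}$, so $\{\pi(W_i)\}$ is a family of non-empty open subsets of $B^0_{\lambda}(S)$, and it is locally finite (at $0$ this uses that, $\pi$ being closed, the sets $\pi^{-1}(U(0))$ form a neighbourhood base of $\mathscr{J}$, together with the fact that some open neighbourhood of $\mathscr{J}$ meets only finitely many $W_i$). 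Since $(B^0_{\lambda}(S),\tau^0_{B_S})$ is pseudocompact, $\{\pi(W_i)\}$ is finite, hence so is $\{W_i\}$, and $(B_{\lambda}(S),\widehat{\tau}_{B_S})$ is pseudocompact. I expect the closedness of $\pi$ — equivalently, that the sets $\pi^{-1}(U(0))$ form a neighbourhood base of $\mathscr{J}$ — to be the main technical obstacle; it is precisely the finiteness of $A_{U(0)}$ from Lemma~\ref{lemma-2.18} that makes it work, and the same finiteness drives the bookkeeping at the zero in the separate-continuity step.
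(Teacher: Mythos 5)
Your proof is correct, and its skeleton (continuity of $\pi_{B_S}$, Hausdorffness, separate continuity via the list of translate inclusions, then the pseudocompactness equivalence) matches the paper's; but the last step is handled by a genuinely different argument. The paper disposes of pseudocompactness by saying it is ``similar to the second statement of Proposition~\ref{proposition-2.20}'': a basic neighbourhood $U_\pi(0)$ meets all but finitely many members of a locally finite family, and any member disjoint from $U_\pi(0)$ must be contained in the finitely many cells indexed by $A_{U(0)}$ (every other cell lies in $\operatorname{cl}(U_\pi(0))$ by Lemma~\ref{lemma-2.18}), after which pseudocompactness of the finitely many open copies of $(S,\tau_S)$ finishes. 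You instead prove that the fibre $\mathscr{J}=\pi^{-1}(0)$ is compact and that the sets $\pi^{-1}(U(0))$ (after intersecting with cell-neighbourhoods of the finitely many points $(\alpha,0_S,\beta)$ with $(\alpha,\beta)\in A_{U(0)}$) are cofinal among neighbourhoods of $\mathscr{J}$, so that $\pi$ is closed, and then push the locally finite family forward into the pseudocompact space $\big(B^0_\lambda(S),\tau^0_{B_S}\big)$. Both routes rest on the finiteness of $A_{U(0)}$, i.e.\ on Lemma~\ref{lemma-2.18}; yours trades the paper's use of pseudocompactness of $S$ for the hypothesis that $B^0_\lambda(S)$ is pseudocompact, and as a by-product isolates the perfectness of $\pi$, which is of independent use. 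Your treatment of separate continuity at zero is in fact more careful than the paper's items $(vi)$--$(vii)$: as literally stated there, $(\alpha,a,\beta)\cdot U_\pi(0)\subseteq W_\pi(0)$ needs exactly the extra shrinking you perform (so that $U_\pi(0)$ misses the finitely many cells whose translates land in cells indexed by $A_{W(0)}$). Two small blemishes, neither fatal: the points $(\alpha,0_S,\beta)$ are isolated only \emph{in the subspace} $\mathscr{J}$ (not in $B_\lambda(S)$, since $0_S$ need not be isolated in $S$) --- your wording suggests you know this, but it should be said; and in the parenthetical for the $(\Leftarrow)$ direction the zero $0$ of $B_\lambda(S)$ is not an isolated point, though that direction is immediate from the hypothesis in any case.
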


\begin{proof}
We observe that simple verifications show that the natural homomorphism $\pi_{B_S}\colon \left(B_{\lambda}(S),\widehat{\tau}_{B_S}\right)\rightarrow \left(B^0_{\lambda}(S),\tau^0_{B_S}\right)$ is a continuous map.

Let $a$ and $b$ be arbitrary elements of the semitopological semigroup $(S,\tau_S)$ and $W(ab)$, $U(a)$, $V(b)$ be arbitrary open neighborhoods of the elements $ab$, $a$ and $b$, respectively, such that $U(a)\cdot b\subseteq W(ab)$ and $a\cdot V(b)\subseteq W(ab)$. Then the following conditions hold:
\begin{itemize}
  \item[$(i)$] $(U(a))_{\alpha,\beta}\cdot (\beta,b,\gamma)\subseteq (W(ab))_{\alpha,\gamma}$;
  \item[$(ii)$] $(\alpha,a,\beta)\cdot (V(b))_{\beta,\gamma}\subseteq (W(ab))_{\alpha,\gamma}$;
  \item[$(iii)$] if $\beta\neq\gamma$ then $(U(a))_{\alpha,\beta}\cdot (\gamma,b,\delta)=\{0\}\subseteq U_\pi(0)$ and $(\alpha,a,\beta)\cdot (V(b))_{\gamma,\delta}=\{0\}\subseteq U_\pi(0)$ for every open neighbourhood $U(0)$ of zero in $\left(B^0_{\lambda}(S),\tau^0_{B_S}\right)$;
  \item[$(iv)$] $U_\pi(0)\cdot 0=\{0\}\subseteq U_\pi(0)$ and $0\cdot U_\pi(0)=\{0\}\subseteq U_\pi(0)$ for every open neighbourhood $U(0)$ of zero in $\left(B^0_{\lambda}(S),\tau^0_{B_S}\right)$;
  \item[$(v)$] $(U(a))_{\alpha,\beta}\cdot 0=\{0\}\subseteq U_\pi(0)$ and $0\cdot (V(b))_{\beta,\gamma}=\{0\}\subseteq U_\pi(0)$ for every open neighbourhood $U(0)$ of zero in $\left(B^0_{\lambda}(S),\tau^0_{B_S}\right)$;

  \item[$(vi)$] $(\alpha,a,\beta)\cdot U_\pi(0)\subseteq W_\pi(0)$ in $\left(B_{\lambda}(S),\widehat{\tau}_{B_S}\right)$ for $U_\pi(0), W_\pi(0)\in \widehat{\mathscr{B}}_B(0)$< where $U(0)$ and $W(0)$ are elements of a base of the topology $\tau^0_{B_S}$ at zero $0$ of $B^0_{\lambda}(S)$ such that $(\alpha,a,\beta)\cdot U(0)\subseteq W(0)$;
  \item[$(vii)$] $U_\pi(0)\cdot (\alpha,a,\beta)\subseteq W_\pi(0)$ in $\left(B_{\lambda}(S),\widehat{\tau}_{B_S}\right)$ for $U_\pi(0), W_\pi(0)\in \widehat{\mathscr{B}}_B(0)$< where $U(0)$ and $W(0)$ are elements of a base of the topology $\tau^0_{B_S}$ at zero $0$ of $B^0_{\lambda}(S)$ such that $U(0)\cdot(\alpha,a,\beta)\subseteq W(0)$,
\end{itemize}
for each $\alpha,\beta,\gamma,\delta\in\lambda$.

The proof of the last statement is similar to the proof of the second statement of Proposition~\ref{proposition-2.20}.
\end{proof}

\begin{remark}\label{remark-2.23}
Also, we may consider the semitopological semigroup $\left(B_{\lambda}(S),\widehat{\tau}_{B_S}\right)$ as a topological Brandt $\lambda^0$-extension of a Hausdorff pseudocompact semitopological monoid $T=S\sqcup 0_T$ with ``new'' isolated zero $0_T$.
\end{remark}

\begin{theorem}\label{theorem-2.24}
Let $\big\{ \big(B^0_{\lambda_i}(S_i),\tau^0_{B(S_i)}\big) \colon i\in\mathscr{I}\big\}$ be a non-empty family of Hausdorff pseudocompact to\-pological Brandt $\lambda_i^0$-extension of Hausdorff pseudocompact semitopological monoids with zero such that the Tychonoff product $\prod\left\{ S_i \colon i\in\mathscr{I}\right\}$ is a pseudocompact space. Then the direct product\linebreak $\prod\big\{ \big(B^0_{\lambda_i}(S_i),\tau^0_{B(S_i)}\big) \colon i\in\mathscr{I}\big\}$ with the Tychonoff topology is a Hausdorff pseudocompact semitopological semigroup.
\end{theorem}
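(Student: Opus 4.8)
The plan is to reduce the theorem to the "nice" case covered by the earlier machinery (Theorems~\ref{theorem-2.4} and~\ref{theorem-2.12} and, in the finite-$\lambda_i$ case, Theorem~\ref{theorem-2.16}) by replacing each factor $\big(B^0_{\lambda_i}(S_i),\tau^0_{B(S_i)}\big)$ with a topologically better-behaved, but still pseudocompact, Brandt $\lambda_i^0$-extension of a suitable monoid, and then transporting pseudocompactness back along a continuous surjection. Concretely, for each index $i$ with $\lambda_i$ infinite, form the space $\big(B^0_{\lambda_i}(S_i),\tau_{B}^{S_i}\big)$ of Example~\ref{example-2.19} (the "canonical" topology on the Brandt $\lambda_i^0$-extension, with the explicit base at zero). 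By Proposition~\ref{proposition-2.20}, this is a Hausdorff semitopological semigroup which is pseudocompact precisely because $S_i$ is; when $\lambda_i$ is finite it is just the unique topological Brandt $\lambda_i^0$-extension, which is again pseudocompact by Proposition~\ref{proposition-2.20} (or directly). The point of passing to $\tau_B^{S_i}$ is that on this space the map $g_i\colon \mathscr A(\lambda_i\times\lambda_i)\times S_i \to B^0_{\lambda_i}(S_i)$ of the proof of Theorem~\ref{theorem-2.4} is continuous (the base at zero in Example~\ref{example-2.19} is exactly the one appearing in Theorem~\ref{theorem-2.1}/\ref{theorem-2.11}).

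Next I would assemble the global map. Since each $\mathscr A(\lambda_i\times\lambda_i)$ is compact, the Tychonoff product $\prod_{i\in\mathscr I}\big(\mathscr A(\lambda_i\times\lambda_i)\times S_i\big)$ is homeomorphic to $\big(\prod_i \mathscr A(\lambda_i\times\lambda_i)\big)\times\big(\prod_i S_i\big)$, which is a product of a compact space with the pseudocompact space $\prod_i S_i$; hence it is pseudocompact by Corollary~3.3 of~\cite{GutikRavsky2013??} (the product of a pseudocompact space with a compactum is pseudocompact). Define $g=\prod_{i\in\mathscr I} g_i$; by Proposition~2.3.6 of~\cite{Engelking1989} together with the continuity of each $g_i$, the map $g\colon \prod_i\big(\mathscr A(\lambda_i\times\lambda_i)\times S_i\big)\to \prod_i\big(B^0_{\lambda_i}(S_i),\tau_B^{S_i}\big)$ is continuous and clearly onto. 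A continuous image of a pseudocompact space is pseudocompact, so $\prod_i\big(B^0_{\lambda_i}(S_i),\tau_B^{S_i}\big)$ is a Hausdorff pseudocompact semitopological semigroup.

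It remains to pass from the canonical topologies $\tau_B^{S_i}$ back to the given topologies $\tau^0_{B(S_i)}$. The key observation is that the identity map $\mathrm{id}\colon \big(B^0_{\lambda_i}(S_i),\tau_B^{S_i}\big)\to\big(B^0_{\lambda_i}(S_i),\tau^0_{B(S_i)}\big)$ is continuous: away from zero both topologies restrict to copies of $\tau_{S_i}$ by condition~(b) of Definition~\ref{def2}, so it suffices to check continuity at zero, i.e. that every $\tau^0_{B(S_i)}$-open neighbourhood $U(0)$ of zero contains a set of the form $U_A(0)$ from Example~\ref{example-2.19}; this is exactly the content of Lemma~\ref{lemma-2.18}, which guarantees that all but finitely many of the strips $S^*_{\alpha,\beta}$ lie in $\operatorname{cl}(U(0))$, combined with separate continuity (translations of $U(0)$) to pull the finitely many exceptional strips inside an honest open set — that is, $U(0)$ contains $U_A(0)$ for a suitable finite $A\subseteq\lambda_i\times\lambda_i$ and suitable $\tau_{S_i}$-neighbourhoods of $0_{S_i}$ on the coordinates of $A$. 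Taking the product of these identity maps over $i\in\mathscr I$ (again continuous by Proposition~2.3.6 of~\cite{Engelking1989}) yields a continuous surjection $\prod_i\big(B^0_{\lambda_i}(S_i),\tau_B^{S_i}\big)\to\prod_i\big(B^0_{\lambda_i}(S_i),\tau^0_{B(S_i)}\big)$, and pseudocompactness is inherited by continuous images; the product on the right is a Hausdorff semitopological semigroup as a product of such, completing the proof. I expect the main obstacle to be precisely this last reduction — verifying via Lemma~\ref{lemma-2.18} that an arbitrary neighbourhood of zero in the given topology is "large" enough to dominate a basic neighbourhood of the canonical topology, so that the two topologies are comparable on each factor; once that is in hand everything else is a routine chain of "continuous image of pseudocompact is pseudocompact" applications.
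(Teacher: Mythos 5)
Your overall architecture (pass to the ``canonical'' topology of Example~\ref{example-2.19}, prove pseudocompactness of that product via the maps $g_i$, then transfer back to the given topologies $\tau^0_{B(S_i)}$) matches the paper's, but the transfer step --- which you correctly flag as the main obstacle --- is where your argument breaks down. You claim that the identity map $\operatorname{id}\colon\big(B^0_{\lambda_i}(S_i),\tau_B^{S_i}\big)\to\big(B^0_{\lambda_i}(S_i),\tau^0_{B(S_i)}\big)$ is continuous because ``every $\tau^0_{B(S_i)}$-open neighbourhood $U(0)$ of zero contains a set of the form $U_A(0)$,'' and you cite Lemma~\ref{lemma-2.18} for this. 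But Lemma~\ref{lemma-2.18} only says that all but finitely many strips $S^*_{\alpha,\beta}$ lie in $\operatorname{cl}_{B^0_\lambda(S)}(U(0))$ --- containment in the \emph{closure}, not in $U(0)$ itself --- and the gap between these two is exactly the non-semiregularity phenomenon. Example~\ref{example-2.26} is an explicit counterexample to your claim: there the basic neighbourhoods of zero have the form $B_\lambda(\mathbb{T})\setminus(\mathbb{T}_{\alpha_1,\beta_1}\cup\cdots\cup\mathbb{T}_{\alpha_n,\beta_n}\cup F)$ with $F$ meeting every strip, so no such neighbourhood contains a full strip, whereas every $U_A(0)$ contains all but finitely many full strips; hence no $\tau^0$-neighbourhood of zero contains any $U_A(0)$, and the identity map in your direction is discontinuous. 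The appeal to ``separate continuity (translations of $U(0)$)'' cannot repair this, since the defect is a single deleted point per strip, not a finite set of exceptional strips.

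The paper circumvents precisely this difficulty by a longer detour: it introduces the topology $\widehat{\tau}_{B(S_i)}$ of Example~\ref{example-2.21} on the \emph{unquotiented} Brandt extension $B_{\lambda_i}(S_i)$ (a Brandt $\lambda_i^0$-extension of $T_i=S_i\sqcup 0_{T_i}$ with isolated zero), which is finer than the canonical topology $\tau_B^{S_i}$ and maps continuously \emph{onto} $\big(B^0_{\lambda_i}(S_i),\tau^0_{B(S_i)}\big)$ via the natural homomorphism $\pi_{B_S^i}$. Since pseudocompactness does not ascend from a coarser to a finer topology in general, the paper then invokes Ravsky's regularization lemmas: the product of the semiregularizations $\tau^R_{B(S_i)}$ is pseudocompact (being a continuous image of the pseudocompact canonical product), the regularization of $\prod_i\big(B_{\lambda_i}(S_i),\widehat{\tau}_{B(S_i)}\big)$ coincides with $\prod_i\big(B_{\lambda_i}(S_i),\tau^R_{B(S_i)}\big)$, and a space is pseudocompact iff its regularization is; only then does one push forward along $\prod_i\pi_{B_S^i}$. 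Some replacement for this regularization argument is unavoidable, so your proof as written has a genuine gap.
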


\begin{proof}
We consider two cases: 1) $\lambda_i$ is finite cardinal, and 2) $\lambda_i$ is infinite cardinal, $i\in\mathscr{I}$.

1) Let $i\in\mathscr{I}$ be an index such that $\lambda_i$ is infinite cardinal. Then we put $\widehat{\tau}_{B(S_i)}$ is the topology on the Brandt $\lambda_i$-extension $B_{\lambda_i}(S_i)$ defined in Example~\ref{example-2.21}. Then by Proposition~\ref{proposition-2.22}, $\left(B_{\lambda_i}(S_i),\widehat{\tau}_{B(S_i)}\right)$ is a Hausdorff pseudocompact semitopological semigroup. By Remark~\ref{remark-2.23} we have that the semitopological semigroup $\left(B_{\lambda_i}(S_i),\widehat{\tau}_{B(S_i)}\right)$ is a topological Brandt $\lambda^0_i$-extension of a Hausdorff pseudocompact semitopological monoid $T_i=S\sqcup 0_{T_i}$ with isolated zero $0_{T_i}$. By $\tau_i$ we denote the topology of the space $T_i$. Let $\tau_{B}^{T_i}$ be the topology on the Brandt $\lambda^0$-extension $\left(B^0_{\lambda_i}(T_i),\tau_{B}^{T_i}\right)$ of $(T_i,\tau_i)$ defined in Example~\ref{example-2.19}. Next we algebraically identify the semigroup $B^0_{\lambda_i}(T_i)$ with the Brandt $\lambda_i$-extension $B_{\lambda_i}(S_i)$ and the topology $\tau_{B}^{T_i}$ on $B_{\lambda_i}(S_i)$ we shall denote by $\tau_{B}^{S_i}$.

2) Let $i\in\mathscr{I}$ be an index such that $\lambda_i$ is finite cardinal. We put $T_i=S\sqcup 0_{T_i}$ with isolated zero $0_{T_i}$. It is obvious that the semitopological semigroup $T_i$ is pseudocompact if and only if so is the space $S_i$. Then by Theorem~7 from \cite{GutikPavlyk2013a} there exists the unique topological Brandt $\lambda_i^0$-extension $\left(B^0_{\lambda_i}(T_i),\widehat{\tau}_{B(T_i)}\right)$ of the semitopological monoid $T_i$ in the class of semitopological semigroups. Also, Theorem~7 from \cite{GutikPavlyk2013a} implies that the topological space $\left(B^0_{\lambda_i}(T_i),\widehat{\tau}_{B(T_i)}\right)$ is homeomorphic to the topological sum of topological copies of the space $S_i$ and isolated zero, and hence we obtain that the space $\left(B^0_{\lambda_i}(T_i),\widehat{\tau}_{B(T_i)}\right)$ is pseudocompact if and only if so is the space $S_i$. Next we algebraically identify the semigroup $B^0_{\lambda_i}(T_i)$ with the Brandt $\lambda_i$-extension $B_{\lambda_i}(S_i)$ and the topology $\widehat{\tau}_{B(T_i)}$ on $B_{\lambda_i}(S_i)$ we shall denote by $\widehat{\tau}_{B(S_i)}$. Also in this case (when $\lambda_i$ is a finite cardinal) we put $\tau_{B}^{S_i}=\widehat{\tau}_{B(S_i)}$.

Then the definitions of topologies $\widehat{\tau}_{B(S_i)}$ and $\tau_{B}^{S_i}$ on $B_{\lambda_i}(S_i)$ imply that for every index $i\in\mathscr{I}$ the identity map $\widehat{\operatorname{id}}_i\colon \left(B_{\lambda_i}(S_i),\widehat{\tau}_{B(S_i)}\right)\rightarrow \left(B_{\lambda_i}(S_i),\tau_{B}^{S_i}\right)$ is continuous. Let $\tau^R_{B(S_i)}$ be the regularization of the topology $\widehat{\tau}_{B(S_i)}$ on $B_{\lambda_i}(S_i)$. Then the definition of the topology $\tau_{B}^{S_i}$ on $B_{\lambda_i}(S_i)$ implies that the identity map $\operatorname{id}^R_i\colon \left(B_{\lambda_i}(S_i),\tau_{B}^{S_i}\right)\rightarrow \big(B_{\lambda_i}(S_i),\tau^R_{B(S_i)}\big)$ is continuous. Since the pseudocompactness is preserved by continuous maps we obtain that $\big(B_{\lambda_i}(S_i),\tau^R_{B(S_i)}\big)$ is a semiregular pseudocompact space (which is not necessarily a semitopological semigroup). Also, repeating the proof of Theorem~\ref{theorem-2.12} for our case, we get that the Tychonoff product $\prod_{i\in\mathscr{I}} \left(B_{\lambda_i}(S_i),\tau_{B}^{S_i}\right)$ is a pseudocompact space. Then the Cartesian products $\prod_{i\in\mathscr{I}}\widehat{\operatorname{id}}_i\colon \prod_{i\in\mathscr{I}} \left(B_{\lambda_i}(S_i),\widehat{\tau}_{B(S_i)}\right) \rightarrow \prod_{i\in\mathscr{I}}\left(B_{\lambda_i}(S_i),\tau_{B}^{S_i}\right)$ and $\prod_{i\in\mathscr{I}}\operatorname{id}^R_i\colon \prod_{i\in\mathscr{I}} \left(B_{\lambda_i}(S_i),\tau_{B}^{S_i}\right)\rightarrow \prod_{i\in\mathscr{I}}\big(B_{\lambda_i}(S_i),\tau^R_{B(S_i)}\big)$ are continuous maps. This implies that $\prod_{i\in\mathscr{I}}\big(B_{\lambda_i}(S_i),\tau^R_{B(S_i)}\big)$ is a pseudocompact space. Then by Lemma~20 of \cite{Ravsky-arxiv1003.5343v5} the regularization of the product $\prod_{i\in\mathscr{I}} \left(B_{\lambda_i}(S_i),\widehat{\tau}_{B(S_i)}\right)$ coincides with $\prod_{i\in\mathscr{I}}\big(B_{\lambda_i}(S_i),\tau^R_{B(S_i)}\big)$ and hence by  Lemma~3 of \cite{Ravsky-arxiv1003.5343v5} we have that the space $\prod_{i\in\mathscr{I}} \left(B_{\lambda_i}(S_i),\widehat{\tau}_{B(S_i)}\right)$ is pseudocompact.

Let $\pi_{B_S^i}\colon B_{\lambda_i}(S_i)\rightarrow B^0_{\lambda_i}(S_i)= B_{\lambda_i}(S_i)/\mathscr{J_i}$ be the natural homomorphism, where $\mathscr{J_i}=\{ 0_i\}\cup\{(\alpha, 0_{S_i}, \beta)\colon 0_{S_i} \hbox{~is zero of~} S_i\}$ is an ideal of the semigroup $B_{\lambda_i}(S_i)$. Then the natural homomorphism \break $\pi_{B_S^i}\colon  \left(B_{\lambda_i}(S_i),\widehat{\tau}_{B(S_i)}\right) \rightarrow \big(B^0_{\lambda_i}(S_i),\tau^0_{B(S_i)}\big)$ is a continuous map. This implies that the product \break $\prod_{i\in\mathscr{I}}\pi_{B_S^i} \colon  \prod_{i\in\mathscr{I}} \left(B_{\lambda_i}(S_i),\widehat{\tau}_{B(S_i)}\right) \rightarrow \prod_{i\in\mathscr{I}} \big(B^0_{\lambda_i}(S_i),\tau^0_{B(S_i)}\big)$ is a continuous map, and hence we get that the Tychonoff product $\prod_{i\in\mathscr{I}} \big(B^0_{\lambda_i}(S_i),\tau^0_{B(S_i)}\big)$ is a pseudocompact space.
\end{proof}

\begin{proposition}\label{proposition-2.25}
Each $H$-closed space is pseudocompact.
\end{proposition}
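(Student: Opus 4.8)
The plan is to invoke the characterization recalled in the introduction: a Hausdorff space is pseudocompact if and only if every locally finite family of non-empty open subsets of it is finite. Since an $H$-closed space is Hausdorff by definition, it suffices to prove that an $H$-closed space $X$ carries no infinite locally finite family of non-empty open sets. I would argue by contradiction. Suppose $\mathcal{U}$ is such a family; passing to a countably infinite subfamily (a subfamily of a locally finite family is again locally finite, and its members stay non-empty) we may write $\mathcal{U}=\{U_n\colon n\in\mathbb{N}\}$. Put $G_k=\bigcup_{n\ge k}U_n$ for the ``tails''; these are non-empty open subsets of $X$ and $G_1\supseteq G_2\supseteq\cdots$.

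From this data I would build a Hausdorff space $Y$ containing $X$ as a non-closed subspace, which is impossible. Let $Y=X\cup\{\infty\}$ with $\infty\notin X$, and call $O\subseteq Y$ open if either $\infty\notin O$ and $O$ is open in $X$, or $\infty\in O$, $O\cap X$ is open in $X$, and $G_k\subseteq O$ for some $k\in\mathbb{N}$. Using that the $G_k$ are nested one checks directly that this family is closed under arbitrary unions and finite intersections, hence is a topology on $Y$, and that the topology it induces on $X$ is exactly the original one; thus $X$ is a subspace of $Y$. Moreover each basic neighbourhood $\{\infty\}\cup G_k$ of $\infty$ meets $X$ (indeed $G_k\neq\varnothing$), so $\infty\in\operatorname{cl}_Y(X)$ and $X$ is not closed in $Y$.

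It remains to verify that $Y$ is Hausdorff, and this is the only place the hypothesis enters, so it is the crux of the argument. Two distinct points of $X$ are separated by disjoint $X$-open sets, which are also open in $Y$. To separate a point $x\in X$ from $\infty$, use local finiteness of $\mathcal{U}$: pick an $X$-open $W\ni x$ meeting only $U_{n_1},\dots,U_{n_m}$ and choose $k>\max\{n_1,\dots,n_m\}$; then $W\cap G_k=\varnothing$, so $W$ and $\{\infty\}\cup G_k$ are disjoint open neighbourhoods of $x$ and $\infty$. Hence $Y$ is a Hausdorff space in which $X$ sits as a non-closed subspace, contradicting the $H$-closedness of $X$; therefore $X$ is pseudocompact. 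I expect no real obstacle beyond the bookkeeping above; the two points worth stating with care are that passing to a countable subfamily is legitimate and that no disjointness of the $U_n$ is needed, since local finiteness alone produces the required separation.
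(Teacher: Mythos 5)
Your proof is correct, and it takes a genuinely different route from the paper's. The paper argues via the Alexandroff--Urysohn characterization of $H$-closedness (every open cover of an $H$-closed space has a finite subfamily whose closures cover the space, cited as Exercise~3.12.5(4) of Engelking): given an infinite locally finite family $\mathscr{U}$, each point has a neighbourhood meeting only finitely many members of $\mathscr{U}$, these neighbourhoods cover $X$, finitely many of their closures already cover $X$, and since an open set disjoint from an open $U$ has closure disjoint from $U$, only finitely many members of $\mathscr{U}$ can be non-empty. You instead work straight from the definition of $H$-closedness, building the one-point Hausdorff extension $Y=X\cup\{\infty\}$ whose neighbourhood filter at $\infty$ is generated by the tails $G_k=\bigcup_{n\ge k}U_n$; local finiteness is exactly what makes $Y$ Hausdorff, and $\infty\in\operatorname{cl}_Y(X)\setminus X$ contradicts $H$-closedness. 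All the verifications you sketch (topology axioms via nestedness of the $G_k$, the induced topology on $X$, the separation of $x$ from $\infty$) go through, and passing to a countable subfamily is indeed harmless. Your argument is more self-contained, essentially reproving the relevant direction of the covering characterization in the special case needed; the paper's is shorter at the cost of invoking that characterization as a black box. Either is acceptable.
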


\begin{proof}
Let $X$ be an $H$-closed space. Assume that the space $X$ is not pseudocompact. Then there exists an infinite locally finite family $\mathscr{U}$ of non-empty open subsets of the space $X$. Since the family $\mathscr{U}$ is locally finite, each point $x\in X$ has an open neighbourhood $U_x$ intersecting only finitely many members of the family $\mathscr{U}$. Since the space $X$ is $H$-closed and $\{U_x\colon x\in X\}$ is an open cover of the space $X$, by Exercise~3.12.5(4) from \cite{Engelking1989} (also see \cite[Chapter~III, Theorem~4]{AlexandroffUrysohn1929}) there exists a finite subset $F$ of the space $X$ such that $X=\bigcup\{\operatorname{cl}_X(U_x)\colon x\in F\}$. But then the set $X$, as the union of the finite family $\{\operatorname{cl}_X(U_x)\colon x\in F\}$ intersects only finitely many members of the family $\mathscr{U}$, a contradiction.
\end{proof}

Let $\lambda$ be any cardinal $\geqslant 1$ and $S$ be any semigroup. We shall say that a subset $\Phi\subset B_{\lambda}^0(S)$ has the \emph{$\lambda$-finite property} in $B_{\lambda}^0(S)$, if $\Phi\cap S^*_{\alpha,\beta}$ is finite for all $\alpha,\beta\in\lambda$ and $\Phi\not\ni 0$, where $0$ is zero of $B_{\lambda}^0(S)$.

\begin{example}\label{example-2.26}
Let $\lambda$ be an infinite cardinal and $\mathbb{T}$ be the unit circle with the usual multiplication of complex numbers and the usual topology $\tau_{\mathbb{T}}$. It is obvious that $(\mathbb{T},\tau_{\mathbb{T}})$ is a topological group. The base of the topology $\tau_B^{\textsf{fin}}$ on the Brandt semigroup $B_\lambda(\mathbb{T})$ we define as follows:
\begin{enumerate}
  \item[1)] for every non-zero element $(\alpha,x,\beta)$ of the semigroup $B_\lambda(\mathbb{T})$ the family
      \begin{equation*}
        \mathscr{B}_{(\alpha,x,\beta)}=\left\{(\alpha,U(x),\beta)\colon U(x)\in \mathscr{B}_\mathbb{T}(x)\right\},
      \end{equation*}
      where $\mathscr{B}_\mathbb{T}(x)$ is a base of the topology $\tau_\mathbb{T}$ at the point $x\in \mathbb{T}$, is the base of the topology $\tau_B^{\textsf{fin}}$ at $(\alpha,x,\beta)\in B_\lambda(\mathbb{T})$;
  \item[2)] the family
      \begin{equation*}
        \mathscr{B}_{0}=\left\{U(\alpha_1,\beta_1;\ldots;\alpha_n,\beta_n; F)\colon \alpha_1,\beta_1,\ldots,\alpha_n,\beta_n\in\lambda, n\in\mathbb{N}, F \hbox{~has the~} \lambda\hbox{-finite property in~} B_{\lambda}^0(S)\right\},
      \end{equation*}
      where
      \begin{equation*}
        U(\alpha_1,\beta_1;\ldots;\alpha_n,\beta_n; F)= B_\lambda(\mathbb{T})\setminus\left(\mathbb{T}_{\alpha_1,\beta_1} \cup\cdots\cup\mathbb{T}_{\alpha_n,\beta_n}\cup F\right),
      \end{equation*}
      is the base of the topology $\tau_B^{\textsf{fin}}$ at zero $0\in B_\lambda(\mathbb{T})$.
\end{enumerate}

Simple verifications show that $(B_\lambda(\mathbb{T}),\tau_B^{\textsf{fin}})$ is a non-semiregular Hausdorff pseudocompact topological space for every infinite cardinal $\lambda$. Next we shall show that the semigroup operation on $(B_\lambda(\mathbb{T}),\tau_B^{\textsf{fin}})$ is separately continuous. The proof of the separate continuity of the semigroup operation in the cases $0\cdot 0$ and $(\alpha,x,\beta)\cdot(\gamma,y,\delta)$, where $\alpha,\beta,\gamma,\delta\in\lambda$ and $x,y\in \mathbb{T}$, is trivial, and hence we only consider the following cases:
\begin{equation*}
     (\alpha,x,\beta)\cdot 0 \qquad \hbox{and} \qquad 0\cdot(\alpha,x,\beta).
\end{equation*}

For arbitrary $\alpha,\beta\in\lambda$ and $\Phi\subset B_\lambda(\mathbb{T})$ we denote $\Phi^{\alpha,\beta}=\Phi\cap\mathbb{T}^{\alpha,\beta}$ and put $\Phi_{\mathbb{T}}(\alpha,\beta)$ is a subset of $\mathbb{T}$ such that $(\Phi_{\mathbb{T}}(\alpha,\beta))_{\alpha,\beta}=\Phi\cap \mathbb{T}_{\alpha,\beta}$.

Fix an arbitrary non-zero element $(\alpha,x,\beta)\in B_\lambda(\mathbb{T})$.
Let $\Phi\subset B_{\lambda}^0(S)$ be an arbitrary subset with the $\lambda$-finite property in $B_{\lambda}^0(S)$. Since $\mathbb{T}$ is a group, there exist subsets $\Upsilon,\Psi\subset B_{\lambda}^0(S)$ with the $\lambda$-finite property in $B_{\lambda}^0(S)$ such that
\begin{equation*}
    \left(x\cdot \Upsilon_{\mathbb{T}}(\beta,\gamma)\right)_{\alpha,\gamma}= \Phi\cap \mathbb{T}_{\alpha,\gamma} \qquad \hbox{and} \qquad \left(\Psi_{\mathbb{T}}(\gamma,\alpha)\cdot x\right)_{\gamma,\beta}= \Phi\cap \mathbb{T}_{\gamma,\beta}
\end{equation*}

Then we have that
\begin{equation*}
\begin{split}
  (\alpha,x,\beta)\cdot &  U(\beta,\beta_1;\ldots;\beta,\beta_n;\alpha_1,\beta_1;\ldots;\alpha_n,\beta_n; \Upsilon)\subseteq \\
    \subseteq &\;\{0\}\cup \bigcup\left\{\mathbb{T}_{\alpha,\gamma}\setminus \left((\alpha,x,\beta)\cdot\Upsilon^{\beta,\gamma}\right)\colon \gamma\in\lambda\setminus\{\beta_1,\ldots,\beta_n\}\right\}\subseteq \\
    \subseteq &\;\{0\}\cup \bigcup\left\{\mathbb{T}_{\alpha,\gamma}\setminus \left(x\cdot\Upsilon_{\mathbb{T}}(\beta,\gamma)\right)_{\alpha,\gamma}\colon \gamma\in\lambda\setminus\{\beta_1,\ldots,\beta_n\}\right\}\subseteq \\
    \subseteq &\; U(\alpha_1,\beta_1;\ldots;\alpha_n,\beta_n; \Phi)
\end{split}
\end{equation*}
and similarly
\begin{equation*}
\begin{split}
  U&(\alpha_1,\alpha;\ldots;\alpha_n,\alpha;\alpha_1,\beta_1;\ldots;\alpha_n,\beta_n; \Psi)\cdot (\alpha,x,\beta)\subseteq \\
    &\; \subseteq \{0\}\cup \bigcup\left\{\mathbb{T}_{\gamma,\beta}\setminus \left(\Psi^{\gamma,\alpha}\cdot(\alpha,x,\beta)\right)\colon \gamma\in\lambda\setminus\{\alpha_1,\ldots,\alpha_n\}\right\}\subseteq \\
    &\; \subseteq \{0\}\cup \bigcup\left\{\mathbb{T}_{\gamma,\beta}\setminus \left(\Psi_{\mathbb{T}}(\gamma,\alpha)\cdot x\right)_{\gamma,\beta}\colon \gamma\in\lambda\setminus\{\alpha_1,\ldots,\alpha_n\}\right\}\subseteq \\
    &\; \subseteq U(\alpha_1,\beta_1;\ldots;\alpha_n,\beta_n; \Phi),
\end{split}
\end{equation*}
for every $U(\alpha_1,\beta_1;\ldots;\alpha_n,\beta_n; \Phi)\in\mathscr{B}_{0}$. This completes the proof of separate continuity of the semigroup operation in $(B_\lambda(\mathbb{T}),\tau_B^{\textsf{fin}})$.

Next we shall show that the space $(B_\lambda(\mathbb{T}),\tau_B^{\textsf{fin}})$ is not countably pracompact. Suppose to the contrary: there exists a dense subset $A$ in $(B_\lambda(\mathbb{T}),\tau_B^{\textsf{fin}})$ such that $(B_\lambda(\mathbb{T}),\tau_B^{\textsf{fin}})$ is countably compact at $A$. Then the definition of the topology $\tau_B^{\textsf{fin}}$ implies that $A\cap \mathbb{T}_{\alpha,\beta}$ is a dense subset in $\mathbb{T}_{\alpha,\beta}$ for all $\alpha,\beta\in\lambda$. We construct a subset $\Phi\subset B_\lambda(\mathbb{T})$ in the following way. For all $\alpha,\beta\in\lambda$ we fix an arbitrary point $(\alpha,x^A_{\alpha,\beta},\beta)\in A\cap \mathbb{T}_{\alpha,\beta}$ and put $\Phi=\left\{(\alpha,x^A_{\alpha,\beta},\beta)\colon \alpha,\beta\in\lambda\right\}$. Then $\Phi$ is the subset with the $\lambda$-finite property in $B_{\lambda}^0(S)$, and the definition of the topology $\tau_B^{\textsf{fin}}$ on $B_\lambda(\mathbb{T})$ implies that $\Phi$ has no an accumulation point $x$ in $(B_\lambda(\mathbb{T}),\tau_B^{\textsf{fin}})$, a contradiction.
\end{example}

Example~\ref{example-2.26} shows that there exists a Hausdorff non-semiregular pseudocompact topological Brandt $\lambda^0$-extension of a Hausdorff compact topological group with adjoined isolated zero which is not a countably pracompact space. Also, Example~18 from \cite{GutikPavlyk2013a} shows that there exists a Hausdorff non-semiregular pseudocompact topological Brandt $\lambda^0$-extension of a countable Hausdorff compact topological monoid with adjoined isolated zero which is not a countably compact space. But, as a counterpart for the $H$-closed case or the sequentially pseudocompact case we have the following

\begin{proposition}\label{proposition-2.27}
Let $S$ be semitopological monoid with zero which is an $H$-closed (resp., a sequentially pseudocompact) space. Then every Hausdorff pseudocompact topological Brandt $\lambda^0$-extension $B^0_{\lambda}(S)$ of $S$ in the class of Hausdorff semitopological semigroup is an $H$-closed (resp., a sequentially pseudocompact) space.
\end{proposition}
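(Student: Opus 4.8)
The plan is to prove the two statements in parallel, resting on two ingredients: that each ``block'' $S_{\alpha,\beta}\subseteq B^0_{\lambda}(S)$ inherits from $S$ whichever of the two properties is at hand, and that Lemma~\ref{lemma-2.18} pins down the behaviour of $B^0_{\lambda}(S)$ near its zero. First I would dispose of the trivial case $S=\{0_S\}$, where $B^0_{\lambda}(S)$ is a singleton, and henceforth assume $1_S\neq 0_S$. Fixing, as in Definition~\ref{def2}, an index $\alpha_0$ with $\big(S_{\alpha_0,\alpha_0},\tau_{B}^S|_{S_{\alpha_0,\alpha_0}}\big)$ naturally homeomorphic to $(S,\tau)$, I would consider for each $\alpha,\beta\in\lambda$ the map $\phi_{\alpha,\beta}\colon B^0_{\lambda}(S)\to B^0_{\lambda}(S)$ given by $\phi_{\alpha,\beta}(x)=(\alpha,1_S,\alpha_0)\cdot x\cdot(\alpha_0,1_S,\beta)$. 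Being a composition of a left and a right translation of the semitopological semigroup $B^0_{\lambda}(S)$, this map is continuous, and a direct computation shows $\phi_{\alpha,\beta}\big(S_{\alpha_0,\alpha_0}\big)=S_{\alpha,\beta}$. Thus $S_{\alpha,\beta}$, which is Hausdorff as a subspace of $B^0_{\lambda}(S)$, is a continuous image of $S_{\alpha_0,\alpha_0}\cong S$; since a continuous image of a sequentially pseudocompact space is sequentially pseudocompact and a continuous Hausdorff image of an $H$-closed space is $H$-closed (both routine), every $S_{\alpha,\beta}$ is sequentially pseudocompact (resp.\ $H$-closed). Finally, an $H$-closed space is pseudocompact by Proposition~\ref{proposition-2.25}, and a sequentially pseudocompact space is easily seen to be pseudocompact, so in either case $S$ is pseudocompact and Lemma~\ref{lemma-2.18} is applicable to $B^0_{\lambda}(S)$.

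For the $H$-closed case I would use the standard characterization (the Hausdorff version of Exercise~3.12.5(4) of \cite{Engelking1989}, whose ``only if'' half already appears in the proof of Proposition~\ref{proposition-2.25}): a Hausdorff space is $H$-closed if and only if every open cover admits a finite subfamily whose closures cover the space. Given an open cover $\mathscr{U}$ of $B^0_{\lambda}(S)$, pick $U_0\in\mathscr{U}$ with $0\in U_0$. By Lemma~\ref{lemma-2.18} there are only finitely many pairs $(\alpha_1,\beta_1),\ldots,(\alpha_n,\beta_n)$ with $S^*_{\alpha_j,\beta_j}\nsubseteq\operatorname{cl}_{B^0_{\lambda}(S)}(U_0)$, so $\operatorname{cl}_{B^0_{\lambda}(S)}(U_0)\supseteq B^0_{\lambda}(S)\setminus\bigcup_{j=1}^{n}S^*_{\alpha_j,\beta_j}$. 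For each $j$, applying the above characterization to the open cover $\{U\cap S_{\alpha_j,\beta_j}\colon U\in\mathscr{U}\}$ of the $H$-closed space $S_{\alpha_j,\beta_j}$ yields finitely many members of $\mathscr{U}$ whose closures in $B^0_{\lambda}(S)$ cover $S_{\alpha_j,\beta_j}$. These finitely many members, together with $U_0$, have closures covering $B^0_{\lambda}(S)$, so $B^0_{\lambda}(S)$ is $H$-closed.

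For the sequentially pseudocompact case, let $\{U_m\colon m\in\mathbb{N}\}$ be a sequence of non-empty open subsets of $B^0_{\lambda}(S)$. If $0\in U_m$ for infinitely many $m$, then $x=0$ together with that infinite index set witnesses the required property, since every neighbourhood of $0$ contains $0\in U_m$. Otherwise, after discarding finitely many indices we may assume $0\notin U_m$ for all $m$ and choose $p_m\in U_m\setminus\{0\}$, say $p_m\in S^*_{\alpha_m,\beta_m}$. If some pair $(\alpha,\beta)$ coincides with $(\alpha_m,\beta_m)$ for infinitely many $m$, then $\{U_m\cap S_{\alpha,\beta}\colon(\alpha_m,\beta_m)=(\alpha,\beta)\}$ is a sequence of non-empty open subsets of the sequentially pseudocompact space $S_{\alpha,\beta}$, and the point $x\in S_{\alpha,\beta}$ and infinite index set it produces also witness the property in $B^0_{\lambda}(S)$, because a neighbourhood of $x$ in $B^0_{\lambda}(S)$ meets $S_{\alpha,\beta}$ in a neighbourhood of $x$ there. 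If instead every pair occurs only finitely often, the $(\alpha_m,\beta_m)$ assume infinitely many distinct values, and after passing to a subsequence we may assume they are pairwise distinct; then $x=0$ and $S=\mathbb{N}$ work. Indeed, for a neighbourhood $U$ of $0$, Lemma~\ref{lemma-2.18} shows that only finitely many pairs $(\alpha,\beta)$ satisfy $S^*_{\alpha,\beta}\nsubseteq\operatorname{cl}_{B^0_{\lambda}(S)}(U)$, so for all but finitely many $m$ we have $p_m\in S^*_{\alpha_m,\beta_m}\subseteq\operatorname{cl}_{B^0_{\lambda}(S)}(U)$, whence the open neighbourhood $U_m$ of $p_m$ meets $U$; thus $\{m\colon U_m\cap U=\varnothing\}$ is finite.

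The bulk of the argument is bookkeeping; the one point that needs to be noticed is the last scenario, where the $U_m$ are spread over infinitely many distinct blocks and no single block can serve. There the zero of $B^0_{\lambda}(S)$ is the witnessing point, and the work is done by Lemma~\ref{lemma-2.18} together with the elementary fact that any open set containing a point of $\operatorname{cl}_{B^0_{\lambda}(S)}(U)$ must meet $U$. Everything else — the transfer of $H$-closedness and sequential pseudocompactness from $S$ to each $S_{\alpha,\beta}$ via the translation map $\phi_{\alpha,\beta}$, and the reduction of an open cover of $B^0_{\lambda}(S)$ to finitely many exceptional blocks — is routine.
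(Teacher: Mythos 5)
Your proof is correct, and it reaches the conclusion by a noticeably different route than the paper's, although both arguments pivot on the same key fact, Lemma~\ref{lemma-2.18}. For the $H$-closed half the paper argues by contradiction from the definition: it embeds $B^0_{\lambda}(S)$ densely in a larger Hausdorff space, separates an outside point from zero, uses Lemma~\ref{lemma-2.18} to confine the separating neighbourhood to finitely many blocks, and then invokes Lemma~2 of \cite{GutikPavlyk2013a} to conclude those blocks are closed in the ambient space; you instead work intrinsically with the Alexandroff--Urysohn cover characterization (finitely many members with closures covering the space), which is arguably cleaner since it never leaves $B^0_{\lambda}(S)$. You also derive the $H$-closedness (resp.\ sequential pseudocompactness) of each block $S_{\alpha,\beta}$ from scratch, as a continuous Hausdorff image of $S_{\alpha_0,\alpha_0}$ under the translation $x\mapsto(\alpha,1_S,\alpha_0)\cdot x\cdot(\alpha_0,1_S,\beta)$, where the paper simply cites Lemma~2 of \cite{GutikPavlyk2013a}; both are legitimate, and your version is self-contained. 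Your explicit observation that $H$-closedness (via Proposition~\ref{proposition-2.25}) and sequential pseudocompactness each imply pseudocompactness of $S$, so that Lemma~\ref{lemma-2.18} is actually applicable as stated, is a detail the paper passes over in silence. In the sequentially pseudocompact half your three-way case split (zero lies in infinitely many $U_m$; some block is hit infinitely often; the witnesses spread over infinitely many distinct blocks) is finer than the paper's dichotomy, which splits on whether $\bigcup_n U_n$ lies in finitely many blocks and then essentially asserts that Lemma~\ref{lemma-2.18} hands over the infinite index set in the remaining case; your version actually supplies the selection of points $p_m$ and the passage to pairwise distinct blocks that make that assertion work, so it is the more watertight of the two.
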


\begin{proof}
First we consider the case when $S$ an $H$-closed space. Suppose to the contrary that there exists a Hausdorff pseudocompact topological Brandt $\lambda^0$-extension $(B^0_{\lambda}(S),\tau_B)$ of $S$ in the class of Hausdorff semitopological semigroup such that $(B^0_{\lambda}(S),\tau_B)$ is not an $H$-closed space. Then there exists a Hausdorff topological space $X$ which contains the topological $(B^0_{\lambda}(S),\tau_B)$ as a non-closed subspace. Without loss of generality we may assume that $(B^0_{\lambda}(S),\tau_B)$ is a dense subspace of $X$ such that $X\setminus B^0_{\lambda}(S)\neq\varnothing$. Fix an arbitrary point $x\in X\setminus B^0_{\lambda}(S)$. Then we have that $U(x)\cap B^0_{\lambda}(S)\neq\varnothing$ for any open neighbourhood $U(x)$ of the point $x$ in $X$. Now, the Hausdorffness of $X$ implies that there exist open neighbourhoods $V(x)$ and $V(0)$ of $x$ and zero $0$ of the semigroup $B^0_{\lambda}(S)$ such that $V(x)\cap V(0)=\varnothing$. Then by Lemma~\ref{lemma-2.18} we obtain that there exist at most finitely many pairs of indices $(\alpha_1,\beta_1),\ldots, (\alpha_n,\beta_n)\in\lambda\times\lambda$ such that $S^*_{\alpha_i,\beta_i}\nsubseteq\operatorname{cl}_{B^0_{\lambda}(S)}(V(0))$ for any $i=1,\ldots, n$. Hence by Corollary~1.1.2 of \cite{Engelking1989}, the neighbourhood $V(x)$ intersects at most finitely many subsets $S_{\alpha,\beta}$, $\alpha,\beta\in\lambda$. Then by Lemma~2 of \cite{GutikPavlyk2013a} we get that $S_{\alpha,\beta}$ is a closed subset of $X$ for all $\alpha,\beta\in\lambda$, and hence $B^0_{\lambda}(S)$ is a closed subspace of $X$, a contradiction.

Next we suppose that $S$ is a sequentially pseudocompact space. Let $\{U_n\colon n\in\mathbb{N}\}$ be any sequence of non-empty open subsets of the space $B^0_{\lambda}(S)$. If there exists finitely many pairs of indices $(\alpha_1,\beta_1),\ldots, (\alpha_n,\beta_n)\in\lambda\times\lambda$ such that $\bigcup\{U_n\colon n\in\mathbb{N}\}\subseteq S_{\alpha_1,\beta_1}\cup\cdots\cup S_{\alpha_n,\beta_n}$ the sequential pseudocompactness of $S$ and Lemma~2 from \cite{GutikPavlyk2013a} imply that there exist a point $x\in S_{\alpha_1,\beta_1}\cup\cdots\cup S_{\alpha_n,\beta_n}$ and an infinite set $S\subset\mathbb{N}$ such that for each neighborhood $U(x)$ of the point $x$ the set $\{n\in S\colon U_n\cap U(x)=\varnothing\}$ is finite. In the other case by Lemma~\ref{lemma-2.18} we get that there exists an infinite set $S\subset\mathbb{N}$ such that for each neighborhood $U(0)$ of zero $0$ of the semigroup $B^0_{\lambda}(S)$ the set $\{n\in S\colon U_n\cap U(0)=\varnothing\}$ is finite. This completes the proof of our lemma.
\end{proof}

Since by Theorem~3 from \cite{ChevalleyFrink1941} (see also Problem~3.12.5(d) in \cite{Engelking1989}) the Tychonoff product of the non-empty family non-empty $H$-topological spaces is $H$-closed, and by Proposition~2.2 from \cite{GutikRavsky2014??}, the Tychonoff product of a non-empty family of non-empty sequentially pseudocompact spaces is sequentially pseudocompact Proposition~\ref{proposition-2.27} implies the following

\begin{corollary}\label{corollary-2.28}
Let $\big\{ \big(B^0_{\lambda_i}(S_i),\tau^0_{B(S_i)}\big) \colon i\in\mathscr{I}\big\}$ be a non-empty family of Hausdorff pseudocompact topological Brandt $\lambda_i^0$-extension of Hausdorff $H$-closed (resp., a sequentially pseudocompact) semitopological monoids with zero. Then the direct product $\prod\big\{ \big(B^0_{\lambda_i}(S_i),\tau^0_{B(S_i)}\big) \colon i\in\mathscr{I}\big\}$ with the Tychonoff topology is a Hausdorff $H$-closed (resp., a sequentially pseudocompact) semitopological semigroup.
\end{corollary}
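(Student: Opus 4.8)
The plan is to push everything down to the individual factors by means of Proposition~\ref{proposition-2.27}, and then to finish with the two product theorems quoted immediately before the corollary. First I would note that, by hypothesis, for each $i\in\mathscr{I}$ the space $\big(B^0_{\lambda_i}(S_i),\tau^0_{B(S_i)}\big)$ is a Hausdorff pseudocompact topological Brandt $\lambda_i^0$-extension of the semitopological monoid $S_i$ with zero in the class of Hausdorff semitopological semigroups, and $S_i$ is an $H$-closed (resp., a sequentially pseudocompact) space. Hence all three hypotheses of Proposition~\ref{proposition-2.27} are satisfied, and it yields that each factor $\big(B^0_{\lambda_i}(S_i),\tau^0_{B(S_i)}\big)$ is itself an $H$-closed (resp., a sequentially pseudocompact) space.

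Next I would check the ``algebra $+$ Hausdorffness'' part of the statement. Each $\big(B^0_{\lambda_i}(S_i),\tau^0_{B(S_i)}\big)$ is a Hausdorff semitopological semigroup, so the direct product $\prod\big\{ \big(B^0_{\lambda_i}(S_i),\tau^0_{B(S_i)}\big) \colon i\in\mathscr{I}\big\}$ with the Tychonoff topology is Hausdorff (a Tychonoff product of Hausdorff spaces is Hausdorff), and it is a semitopological semigroup: every left (right) translation of the product by a fixed element is the Tychonoff product of the corresponding left (right) translations of the factors, each of which is continuous, and by Proposition~2.3.6 of \cite{Engelking1989} a Tychonoff product of continuous maps is continuous.

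Finally, to obtain the required compactness-type property I would invoke the product theorems recalled right before the corollary: in the $H$-closed case, Theorem~3 of \cite{ChevalleyFrink1941} (equivalently, Problem~3.12.5(d) of \cite{Engelking1989}) gives that the Tychonoff product of the non-empty family of non-empty $H$-closed spaces $\big(B^0_{\lambda_i}(S_i),\tau^0_{B(S_i)}\big)$ is $H$-closed; in the sequentially pseudocompact case, Proposition~2.2 of \cite{GutikRavsky2014??} gives that the Tychonoff product of the non-empty family of non-empty sequentially pseudocompact spaces $\big(B^0_{\lambda_i}(S_i),\tau^0_{B(S_i)}\big)$ is sequentially pseudocompact. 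Combining this with the previous paragraph gives the conclusion. I do not expect a genuine obstacle here: all the real work is done inside Proposition~\ref{proposition-2.27}, which converts the global properties ``$H$-closed'' and ``sequentially pseudocompact'' into properties inherited by each Brandt $\lambda_i^0$-extension factor (using Lemma~\ref{lemma-2.18} and the quoted lemmas of \cite{GutikPavlyk2013a}), after which the cited product theorems close the argument; the only point deserving care is to verify that each factor does satisfy the hypotheses of Proposition~\ref{proposition-2.27}, and this is part of the corollary's assumptions.
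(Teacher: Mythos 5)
Your proof is correct and matches the paper's argument exactly: the paper likewise derives the corollary by applying Proposition~\ref{proposition-2.27} to each factor and then invoking Theorem~3 of \cite{ChevalleyFrink1941} (for the $H$-closed case) and Proposition~2.2 of \cite{GutikRavsky2014??} (for the sequentially pseudocompact case) to the Tychonoff product. Your additional check that the product is a Hausdorff semitopological semigroup is routine and consistent with what the paper leaves implicit.
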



\end{document}